\newcommand{\w}{\omega}
\newcommand{\IN}{\mathbb N}
\newcommand{\IZ}{\mathbb Z}
\newcommand{\IG}{\mathbb G}
\newcommand{\F}{\mathcal F}
\newcommand{\Ra}{\Rightarrow}
\newtheorem{theorem}{Theorem}
\newtheorem{proposition}{Proposition}
\newtheorem{lemma}{Lemma}
\theoremstyle{definition}
\title{The Kirch space is topologically rigid}
\author{Taras Banakh, Yaryna Stelmakh and S\l awomir Turek}
\address{T.Banakh: Ivan Franko National University of Lviv (Ukraine) and Jan Kochanowski University in Kielce (Poland)}
\email{t.o.banakh@gmail.com}
\address{Ya.Stelmakh: Ivan Franko National University of Lviv (Ukraine)}
\email{yarynziya@ukr.net}
\address{S.Turek: Cardinal Stefan Wyszy\'nski University in Warsaw (Poland)}
\email{s.turek@uksw.edu.pl}
\subjclass[2010]{Primary: 54D05; Secondary: 11A41.}
\dedicatory{Dedicated to Professor Jerzy Mioduszewski}
\begin{document}
\begin{abstract} The {\em Golomb space} (resp. the {\em Kirch space}) is the set $\IN$ of positive integers endowed with the topology generated by the base consisting of arithmetic progressions $a+b\IN_0=\{a+bn:n\ge 0\}$ where $a,b\in\IN$ and $b$ is a (square-free) number, coprime with $a$. It is known that the Golomb space (resp. the Kirch space) is connected (and locally connected). By a recent result of Banakh, Spirito and Turek, the Golomb space has trivial homeomorphism group and hence is topologically rigid. In this paper we prove the topological rigidity of the Kirch space.
\end{abstract}

\maketitle

In the AMS Meeting announcement \cite{Brown} M.~Brown introduced an amusing topology $\tau_G$ on the set $\IN$ of positive integers turning it into a connected Hausdorff space. The topology is generated by the base consisting of arithmetic progressions $a+b\IN_0:=\{a+bn:n\in\IN_0\}$ with coprime parameters $a,b\in\IN$. Here by $\IN_0=\{0\}\cup\IN$ we denote the set of non-negative integer numbers.

In \cite{SS} Brown's topology is called the {\em relatively prime integer topology}. This topology was popularized by Solomon Golomb \cite{Golomb59}, \cite{Golomb61} who observed that the classical Dirichlet theorem on primes in arithmetic progressions is equivalent to the density of the set $\Pi$ of prime numbers in the topological space $(\IN,\tau_G)$. In honour of Golomb the topological space $\IG:=(\IN,\tau_G)$ is known in General Topology as the {\em Golomb space}, see \cite{Szcz}, \cite{Szcz13}.

The problem of studying the topological structure of the Golomb space was posed to the first author (Banakh) by the third author (Turek) in 2006. In his turn, Turek learned about this problem from Jerzy Mioduszewski who listened to the lecture of Solomon Golomb on the first Toposym in 1961.

The topological structure of the Golomb space was studied by the first and third authors in \cite{BMT} and \cite{BST}. In particular, they proved that the Golomb space admits continuum many continuous self-maps but has only one homeomorphism (the identity). Topological spaces having trivial homeomorphism group are called {\em topologically rigid}. Therefore, the Golomb space is topologically rigid.

It is known that the Golomb space is connected but not locally connected. In \cite{Kirch}  Kirch introduced a topology $\tau_K\subseteq\tau_G$ turning $\IN$ into a connected and locally connected space. The {\em Kirch topology} $\tau_K$ on $\IN$ is generated by the subbase consisting of the arithmetic progressions $a+p\IN_0$ were $p$ is prime and $a\in\IN$ is not divided by $p$. The base of the Kirch topology consists of the arithmetic progressions $a+b\IN_0$ were $a,b\in\IN$ are coprime and $b$ is  square-free (i.e., $b$ is not divisible by the square of a prime number).

The main result of this note is the following rigidity theorem.

\begin{theorem}\label{t:main} The Kirch space $(\IN,\tau_K)$ is topologically rigid.
\end{theorem}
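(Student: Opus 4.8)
The plan is to show that every homeomorphism $h$ of $K:=(\IN,\tau_K)$ is the identity. The starting point is a computation of the closure operator: for coprime $a,b\in\IN$ with $b$ square-free,
\[
\overline{a+b\IN_0}=\{x\in\IN:\ \forall p\in\Pi,\ (p\mid b\ \wedge\ p\nmid x)\Rightarrow p\mid x-a\}.
\]
From this I extract the facts used below: for a prime $p$ one has $\overline{a+p\IN_0}=p\IN\cup(a+p\IN_0)$, so $p\IN$ is closed and equals the boundary of $a+p\IN_0$; for square-free $m$ one has $m\IN=\bigcap_{p\mid m}p\IN$, closed; and $\IN\setminus\{1\}=\bigcup_{p\in\Pi}p\IN$. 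I also record two soft facts needed at the end: $K$ is Hausdorff — distinct $x,y$ are separated by $x+r\IN_0$ and $y+r\IN_0$ for any prime $r$ dividing none of $x$, $y$, $x-y$ — and, since $\tau_K\subseteq\tau_G$, the density of $\Pi$ in the Golomb space $\IG$ (a reformulation of Dirichlet's theorem) yields density of $\Pi$ in $K$.

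The heart of the argument is a purely topological description of the family $\mathcal P=\{p\IN:p\in\Pi\}$. Here I would use that $K$ is locally connected and that its basic open sets $a+b\IN_0$ ($b$ square-free) are connected: for prime modulus the boundary of $a+p\IN_0$ is exactly $p\IN$, whereas for $b$ square-free and composite the boundary of $a+b\IN_0$ is strictly larger, splitting off additional progressions. This should single out the prime-modulus progressions, hence the sets $p\IN$, by a topological condition on boundaries and on the connected components of their complements; consequently any homeomorphism $h$ permutes $\mathcal P$, say $h[p\IN]=\sigma(p)\IN$ with $\sigma$ a permutation of $\Pi$. I expect this step to be the main obstacle: unlike the Golomb space, $K$ is locally connected everywhere, so the primes cannot be detected through failures of local connectedness and must be recognized through these global features of boundaries and components.

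Once $h$ is known to permute $\mathcal P$, the rest is largely bookkeeping plus number theory. First $\sigma=\mathrm{id}$: the open set $\IN\setminus p\IN$ is the disjoint union of the sets $r+p\IN_0$, $1\le r\le p-1$, each clopen in $\IN\setminus p\IN$ and connected, so $\IN\setminus p\IN$ has exactly $p-1$ connected components; since $h$ restricts to a homeomorphism $\IN\setminus p\IN\to\IN\setminus\sigma(p)\IN$, we get $p-1=\sigma(p)-1$, i.e.\ $\sigma(p)=p$. Hence $h$ fixes every $m\IN$, so $h(n)$ has the same set of prime divisors as $n$ for all $n$ (in particular $h(1)=1$); $h$ permutes the $p-1$ components of $\IN\setminus p\IN$, inducing a permutation $\pi_p$ of $(\IZ/p)^{\times}$ with $h[r+p\IN_0]=\pi_p(r)+p\IN_0$; and $h$ maps $S_p:=\{p^k:k\ge1\}$ bijectively onto itself, $h(p^k)=p^{m_p(k)}$ for some permutation $m_p$ of $\IN$. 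From $h(p^k)\in\pi_q(p^k\bmod q)+q\IN_0$ for each prime $q\ne p$ one reads off $p^{m_p(k)}\equiv\pi_q(p^k\bmod q)\pmod q$; comparing, for fixed $q$ with $d:=\mathrm{ord}_q(p)$, the two sides as functions of $k\bmod d$ shows that $k_1\equiv k_2\ (\mathrm{mod}\ d)$ if and only if $m_p(k_1)\equiv m_p(k_2)\ (\mathrm{mod}\ d)$. By Zsygmondy's theorem the set $\{\mathrm{ord}_q(p):q\in\Pi\setminus\{p\}\}$ contains every integer $\ge7$, so $k-1$ and $m_p(k)-m_p(1)$ share all divisors $\ge7$ for every $k\ge8$, which forces $|m_p(k)-m_p(1)|=k-1$; a short argument using that $m_p$ is a bijection of $\IN$ then yields $m_p(1)=1$, i.e.\ $h(p)=p$ for every prime $p$.

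It remains to note that $\{x\in\IN:h(x)=x\}$ is closed in the Hausdorff space $K$ (being the preimage of the diagonal under $x\mapsto(x,h(x))$) and, by the previous paragraph, contains the dense set $\Pi$; hence it equals $\IN$, so $h=\mathrm{id}_{\IN}$, which is Theorem~\ref{t:main}. Summarizing, the closure computations, the component count, and the final density argument are routine; the real difficulty is the topological recognition of the sets $p\IN$, and the arithmetic input is carried by Dirichlet's and Zsygmondy's theorems.
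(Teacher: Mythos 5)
Your outline reproduces the paper's first and last steps (the closure formula of Lemma~\ref{basic}, and the finish via Hausdorffness plus density of $\Pi$, i.e.\ Dirichlet), but the pivotal middle step is missing: you never prove that a homeomorphism $h$ permutes the family $\{p\IN:p\in\Pi\}$ (equivalently, that $h$ preserves prime divisibility). You yourself flag this as ``the main obstacle'' and offer only the hope that boundaries of basic open sets and components of their complements ``should single out'' the prime-modulus progressions. As stated this does not work: a homeomorphism need not carry basic open sets $a+b\IN_0$ to basic open sets, so the fact that the boundary of $a+p\IN_0$ is $p\IN$ while composite square-free moduli have larger boundaries is not, by itself, a homeomorphism-invariant characterization of the sets $p\IN$; you would need an intrinsic topological property possessed by exactly these sets, and none is supplied. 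This is precisely where the actual proof spends almost all of its effort: it introduces the superconnecting filters $\F_E$ and their poset $\mathfrak F$ (invariant by Proposition~\ref{p:iso}), computes them arithmetically (Lemmas~\ref{l:Kirch}, \ref{l:wo2}), identifies the maximal layers $\mathfrak F'$, $\mathfrak F''$ (Lemmas~\ref{l:max}, \ref{efbis}), fixes the powers of $2$ and the filters $\F_{\{p,2p\}}$, and only then obtains $\Pi_{h(x)}\cup\{2\}=\Pi_x\cup\{2\}$ (Lemma~\ref{Pix}); even after that, pinning down $h(p)=p$ requires the graphs $\Gamma_p$ analyzed via Mih\u ailescu's theorem and a final Zsigmondy argument. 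So the gap you leave is not a routine verification but the technical heart of the theorem.

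Granting that missing step, your endgame is essentially sound and in places more streamlined than the paper's: the count of connected components of $\IN\setminus p\IN$ (which needs the connectedness of each class $r+p\IN_0$ in $\tau_K$, a result of Szczuka, not proved in your sketch) recovers $\sigma=\mathrm{id}$; the congruence bookkeeping $p^{m_p(k)}\equiv\pi_q(p^k\bmod q)\pmod q$ together with Zsigmondy (all orders $d\ge 7$ occur) and bijectivity of $m_p$ does force $h(p)=p$, uniformly in $p$, thereby avoiding Mih\u ailescu and the case analysis of Fermat--Mersenne primes in Lemma~\ref{struct}. But since both the component count and the congruence argument presuppose $h[p\IN]=p\IN$, the proposal cannot be completed without supplying the invariant recognition of prime divisibility that the superconnecting-poset machinery provides.
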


The proof of Theorem~\ref{t:main} is long and technical. It is divided into 22 lemmas.
A crucial role in the proof belongs to the superconnectedness of the Kirch space and the superconnecting poset of the Kirch space, which is defined in Section~\ref{s:poset}.

%\begin{theorem} The set $\Pi$ of prime numbers is a dense metrizable subspace of the Golomb space $\IN_\tau$.
%\end{theorem}

%For any number $x\in\IN$ let $\Pi_x$ be the set of prime divisors of $x$.

%\begin{theorem}\label{mainH} Any homeomorphism $h:\IN_\tau\to\IN_\tau$ of the Golomb space has the following properties:
%\begin{enumerate}
%\item $h(1)=1$;
%\item $h(\Pi)=\Pi$;
%\item $\Pi_{h(x)}=h(\Pi_x)$ for every $x\in \IN$.
%\end{enumerate}
%\end{theorem}

%Theorem~\ref{mainH} implies that the Golomb space is not topologically homogeneous. This answer a problem \cite{Ban}, posed by the first author on Mathoverflow.

%\begin{problem} Is the Golomb space rigid?
%\end{problem}

%We recall that a topological space $X$ is {\em rigid} if its homeomorphism group is trivial.

\section{Four classical number-theoretic results}

%For two numbers $x,y$ by $\gcd(x,y)$ we denote their greatest common divisor, and by $x\dag y$ the greatest divisor of $x$, which is coprime with $y$.

By $\Pi$ we denote the set of prime numbers. For a number $x\in\IN$ by $\Pi_x$ we denote the set of all prime divisors of $x$. Two numbers $x,y\in\IN$ are {\em coprime} iff $\Pi_x\cap\Pi_y=\emptyset$.% (which is equivalent to saying that $\gcd(x,y)=1$).

%A number $q\in\IN$ is called {\em square-free} if it is not divided by the square $p^2$ of any prime number $p$.

%For a number $x\in\IN$ and a prime number $p$ let $l_p(x)$ be the largest integer number such that $p^{l_p(x)}$ divides $x$. The function $l_p(x)$ plays the role of logarithm with base $p$. A number $x$ is square-free if and only if $l_p(x)\le 1$ for any prime number $p$.

%A function $f:X\to Y$ is called {\em finite-to-one} if for each $y\in Y$ the preimage $f^{-1}(y)$ is finite.

% For a point $x\in\IN$ by $\tau_x=\{U\in\tau\colon x\in U\}$ we denote the family of open neighborhoods of $x$ in the Kirch topology $\tau_K$ on $\IN$.

In the proof of Theorem~\ref{t:main} we will exploit the  following four known results of Number Theory. The first one is the famous Chinese Remainder Theorem (see. e.g. \cite[3.12]{J}).

\begin{theorem}[Chinese Remainder Theorem]\label{Chinese} If 
$b_1,\dots,b_n\in\IN$ are pairwise coprime numbers, then for any numbers $a_1,\dots,a_n\in\IZ$, the intersection $\bigcap_{i=1}^n(a_i+b_i\IN)$ is infinite.\end{theorem}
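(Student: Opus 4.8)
The statement is the familiar Chinese Remainder Theorem, reformulated so that the conclusion records the \emph{infinitude} of the common solution set rather than merely its nonemptiness. The plan is to first produce a single integer satisfying all the congruences simultaneously, and then exploit the fact that translating such a solution by the product $B:=b_1\cdots b_n$ preserves every congruence, which immediately yields infinitely many solutions lying above $\max_i a_i$.

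First I would fix $B:=b_1\cdots b_n$ and, for each $i\in\{1,\dots,n\}$, set $B_i:=B/b_i=\prod_{j\ne i}b_j$. Since the $b_j$ are pairwise coprime, we have $\Pi_{B_i}=\bigcup_{j\ne i}\Pi_{b_j}$, which is disjoint from $\Pi_{b_i}$; hence $B_i$ is coprime with $b_i$ and therefore admits a multiplicative inverse $c_i$ modulo $b_i$, that is, $B_ic_i\equiv 1\pmod{b_i}$. Putting
$$x_0:=\sum_{i=1}^n a_iB_ic_i,$$
I would check that $x_0\equiv a_j\pmod{b_j}$ for every $j$: for $i\ne j$ the modulus $b_j$ divides $B_i$, so those summands vanish modulo $b_j$, while the remaining summand gives $a_jB_jc_j\equiv a_j\pmod{b_j}$.

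The key observation is that $B\equiv 0\pmod{b_i}$ for each $i$, so every integer of the form $x_0+kB$ with $k\in\IZ$ again satisfies all the congruences $x\equiv a_i\pmod{b_i}$. To finish, I would translate the congruence conditions into membership in the progressions: an integer $x$ lies in $a_i+b_i\IN=\{a_i+b_in:n\ge 1\}$ exactly when $x\equiv a_i\pmod{b_i}$ and $x>a_i$. Choosing $k$ large enough that $x_0+kB>\max_i a_i$, each such $x_0+kB$ belongs to every $a_i+b_i\IN$, hence to $\bigcap_{i=1}^n(a_i+b_i\IN)$; since there are infinitely many admissible $k$, the intersection is infinite.

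The only genuinely delicate point is the construction of the inverses $c_i$, which rests on the coprimality of $B_i$ and $b_i$ (and hence on the hypothesis that the $b_j$ are \emph{pairwise} coprime); everything else is bookkeeping. I expect no real obstacle, the only care needed being the passage that converts the ``positive multiple'' requirement hidden in $a_i+b_i\IN$ (where $n$ ranges over $\IN$, not $\IN_0$) into the single lower bound $x>\max_i a_i$.
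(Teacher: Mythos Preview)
Your argument is correct: the standard constructive proof via the elements $B_i c_i$ gives a simultaneous solution $x_0$, and translating by multiples of $B=b_1\cdots b_n$ yields infinitely many solutions exceeding $\max_i a_i$, hence lying in every $a_i+b_i\IN$. The paper itself does not prove this theorem at all; it is merely quoted as a classical result with a reference, so there is no ``paper's own proof'' to compare against.
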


The second classical result is not elementary and is due to Dirichlet \cite[S.VI]{Dirichlet}, see also \cite[Ch.7]{Ap}.

\begin{theorem}[Dirichlet]\label{Dirichlet} For any coprime numbers $a,b$ the  arithmetic progression $a+b\IN$  contains a prime number. 
\end{theorem}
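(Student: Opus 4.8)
The plan is to follow Dirichlet's classical analytic argument via $L$-functions; since the case $b=1$ is just the infinitude of primes (Euclid), I assume $b\ge2$. First I would introduce the Dirichlet characters modulo $b$, i.e. the group homomorphisms $\chi\colon(\IZ/b\IZ)^\times\to\mathbb C^\times$, extended to all of $\IN$ by setting $\chi(n)=0$ whenever $n$ shares a prime factor with $b$. The engine of the proof is the orthogonality relation: the average $\frac1{\varphi(b)}\sum_\chi\overline{\chi(a)}\,\chi(n)$ equals $1$ when $n\equiv a\pmod b$ and equals $0$ otherwise, which lets me single out the residue class of $a$ from among all residues coprime to $b$. (Coprimality of $a$ and $b$ is exactly what makes $\chi(a)\ne0$ meaningful.)

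Next I would attach to each character the Dirichlet series $L(s,\chi)=\sum_{n=1}^\infty\chi(n)n^{-s}$, which converges and is non-vanishing for $\Re s>1$ by virtue of its Euler product $\prod_{p\in\Pi}(1-\chi(p)p^{-s})^{-1}$. Taking logarithms gives $\log L(s,\chi)=\sum_{p\in\Pi}\chi(p)p^{-s}+R(s,\chi)$, where the remainder $R(s,\chi)$ collecting the higher prime-power terms stays bounded as $s\to1^+$. Multiplying by $\overline{\chi(a)}$, summing over all $\chi$, and applying orthogonality collapses the prime sum onto the target progression, yielding $\sum_\chi\overline{\chi(a)}\log L(s,\chi)=\varphi(b)\sum_{p\equiv a\,(b)}p^{-s}+O(1)$ as $s\to1^+$.

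I would then treat the two kinds of characters separately. For the principal character $\chi_0$ the series $L(s,\chi_0)$ agrees with the Riemann zeta function up to the finitely many Euler factors at primes dividing $b$, so it inherits the pole at $s=1$ and $\log L(s,\chi_0)\to+\infty$. For every non-principal $\chi$ the partial sums of $\chi(n)$ are bounded, so by Abel summation $L(s,\chi)$ extends continuously to the line $\Re s=1$; hence $\log L(s,\chi)$ stays bounded as $s\to1^+$ \emph{provided} $L(1,\chi)\ne0$. Granting this, the left-hand side of the displayed identity diverges to $+\infty$, which forces $\sum_{p\equiv a\,(b)}p^{-s}\to+\infty$ and thereby exhibits infinitely many primes in $a+b\IN$, in particular at least one, as claimed.

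The heart of the matter --- and the step I expect to be the main obstacle --- is precisely the non-vanishing $L(1,\chi)\ne0$ for non-principal $\chi$. The complex characters are comparatively easy: if $L(1,\chi)=0$ for some $\chi$ with $\chi\ne\overline{\chi}$, then also $L(1,\overline\chi)=0$, and the resulting double zero in the product $\prod_\chi L(s,\chi)$ would overpower the single pole contributed by $\chi_0$, driving the product to $0$ as $s\to1^+$; this contradicts the fact that $\prod_\chi L(s,\chi)$ is a Dirichlet series with non-negative coefficients and constant term $1$, hence is $\ge1$ for all real $s>1$. The genuinely delicate case is that of a real non-principal (quadratic) character, where the pairing trick is unavailable. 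Here I would fall back on a dedicated argument --- for instance Landau's lemma on Dirichlet series with non-negative coefficients applied to $\zeta(s)L(s,\chi)$, or an elementary hyperbola estimate showing that $\sum_{n\le x}\bigl(\sum_{d\mid n}\chi(d)\bigr)n^{-1/2}$ would have to be unbounded if $L(1,\chi)$ vanished --- to rule out $L(1,\chi)=0$ and complete the proof.
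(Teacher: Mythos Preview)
Your outline is the standard analytic proof via Dirichlet $L$-functions and is essentially correct as a sketch; the non-vanishing of $L(1,\chi)$ for real non-principal $\chi$ is indeed the crux, and the approaches you mention (Landau's lemma, or the divisor-sum hyperbola method) are the usual ways to handle it.

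However, there is nothing to compare against: the paper does \emph{not} prove Dirichlet's theorem. It is listed among ``four classical number-theoretic results'' that are invoked as tools, with the remark that it ``is not elementary and is due to Dirichlet,'' together with references to Dirichlet's \emph{Lectures} and Apostol's textbook. The theorem is then used only once, in the final lemma, to guarantee that any nonempty open set in the Kirch topology contains a prime. So your proposal supplies an argument the paper deliberately outsources to the literature.
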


The third classical result is a famous theorem of Mih\u ailescu \cite{Mih}, see also \cite{Schoof}.

\begin{theorem}[Mih\u ailescu]\label{Mihailescu} If $a,b\in \big\{m^n:n,m\in \IN\setminus\{1\}\big\}$, then $|a-b|=1$ if and only if $\{a,b\}=\{2^3,3^2\}$.
\end{theorem}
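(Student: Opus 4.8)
The statement is Catalan's conjecture: among the perfect powers $m^n$ with $m,n\in\IN\setminus\{1\}$, the only two that differ by $1$ are $2^3=8$ and $3^2=9$. The implication from right to left is the trivial check $3^2-2^3=1$, so the entire content lies in proving that $\{2^3,3^2\}$ is the \emph{only} such pair. Since every exponent $n\ge 2$ has a prime factor, and $m^n$ is then a perfect power with prime exponent, the plan is first to reduce to the Diophantine equation $x^p-y^q=1$ with $x,y\ge 2$ and $p,q$ prime, and then to rule out every solution other than the known one. I would dispose of the exponents equal to $2$ by classical descent: Lebesgue's argument shows $x^p-y^2=1$ has no solution for $p$ an odd prime, while Ko Chao's argument shows $x^2-y^q=1$ forces $\{x^2,y^q\}=\{3^2,2^3\}$; the case $p=q=2$ is immediate from $(x-y)(x+y)=1$. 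After these reductions it remains to treat $x^p-y^q=1$ with $p,q$ \emph{odd} primes, which is the heart of the matter.

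For odd prime exponents the first step is to extract strong divisibility and factorization data, following Cassels. Rewriting the equation as $x^p-1=y^q$, the factors $x-1$ and $\frac{x^p-1}{x-1}=1+x+\dots+x^{p-1}\equiv p\pmod{x-1}$ have greatest common divisor dividing $p$, and their product is a $q$-th power; one shows $p\mid y$ and $q\mid x$, together with relations of the shape
$$x-1=p^{q-1}a^q,\qquad \frac{x^p-1}{x-1}=p\,b^q,\qquad y=p\,ab,$$
and the symmetric relations on the $y$-side with the roles of $p$ and $q$ interchanged. These already force $x$ and $y$ to be very large in terms of $p$ and $q$, a fact that will be crucial for the final contradiction.

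The decisive step is algebraic and is carried out in the cyclotomic ring $\IZ[\zeta_p]$, where $\zeta_p$ is a primitive $p$-th root of unity. Factoring $y^q=x^p-1=\prod_{i=0}^{p-1}(x-\zeta_p^i)$, the principal ideals $(x-\zeta_p^i)$ are pairwise coprime outside the ramified prime $\lambda=(1-\zeta_p)$ above $p$, so away from $\lambda$ each is a $q$-th power of an ideal, say $(x-\zeta_p)=\lambda^{s}I^q$ for some ideal $I$. The obstruction to $I$ being principal lives in the ideal class group, and here I would invoke Stickelberger's theorem: a suitable element $\theta$ of the Stickelberger ideal annihilates the relevant class, so $(x-\zeta_p)^{\theta}$ becomes a genuine $q$-th power times a unit. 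Making this unit relation explicit and reducing modulo $q^2$ yields Mihailescu's double Wieferich congruences $p^{q-1}\equiv 1\pmod{q^2}$ and $q^{p-1}\equiv 1\pmod{p^2}$. This is where the main difficulty sits: controlling the class group precisely enough, and then the subtler analysis of \emph{primary} cyclotomic units via Thaine's theorem, which bounds the $q$-rank of the class group of the real subfield $\IQ(\zeta_p)^+$ in terms of cyclotomic units.

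With the Wieferich constraints and the plus-part input in hand, the final contradiction is obtained by a height estimate: the Stickelberger–Thaine analysis shows that a specific element of $\IQ(\zeta_p)^+$ of controlled logarithmic height would have to be a $q$-th power, which is incompatible with the large lower bounds on $x$ and $y$ coming from the Cassels relations, except in the already excluded small configurations. The cases $p\equiv 1\pmod q$, $q\equiv 1\pmod p$, and the complementary case where neither divisibility holds are treated separately, each terminating in such a contradiction. I expect the genuine obstacle throughout to be the semisimple module theory over $\mathbb{F}_q[\mathrm{Gal}(\IQ(\zeta_p)/\IQ)]$ needed to determine exactly which eigencomponents of the class group are annihilated, the delicate point being precisely whether $q\mid p-1$; the elementary reductions of Lebesgue, Ko Chao, and Cassels are comparatively routine, whereas the cyclotomic step is the substance of Mihailescu's theorem and cannot be reduced to an elementary argument.
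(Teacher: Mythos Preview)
The paper does not prove this theorem at all: it is quoted as one of the ``four classical number-theoretic results'' and simply cited to Mih\u ailescu \cite{Mih} and Schoof \cite{Schoof}. So there is no ``paper's own proof'' to compare against; the authors treat Theorem~\ref{Mihailescu} as a black box and use it only as an input to Lemma~\ref{struct}.

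Your sketch is a fair high-level outline of Mih\u ailescu's actual argument (reduction to odd prime exponents via Lebesgue and Ko Chao, Cassels' divisibility relations, the cyclotomic factorization in $\IZ[\zeta_p]$, Stickelberger and Thaine input, the double Wieferich criterion, and the final height contradiction). As an outline it is accurate in spirit, but it is not a proof: every substantive step you list (``I would invoke Stickelberger's theorem'', ``Thaine's theorem \dots bounds the $q$-rank'', ``the final contradiction is obtained by a height estimate'') is a placeholder for dozens of pages of hard algebraic number theory, and you have not supplied any of those arguments. In the context of this paper that is exactly the right level of detail---cite the result and move on---but if the intent were to actually prove the theorem, what you have written is a roadmap, not a proof.
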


The fourth classical result we use is due to Karl Zsigmondy \cite{Zsigmondy}, see also \cite[Theorem 3]{Roitman}.

\begin{theorem}[Zsigmondy]\label{Zsigmondy} For integer numbers $a,n\in\IN\setminus\{1\}$ the inclusion $\Pi_{a^n-1}\subseteq\bigcup\limits_{0<k<n}\Pi_{a^k-1}$ holds if and only if one of the following conditions is satisfied:
\begin{enumerate}
\item $n=2$ and $a=2^k-1$ for some $k\in\IN$; then $a^2-1=(a+1)(a-1)=2^k(a-1)$;
\item $n=6$ and $a=2$; then $a^n-1=2^6-1=63=3^2\times 7=(a^2-1)^2\times(a^3-1)$.
\end{enumerate}
\end{theorem}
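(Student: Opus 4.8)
The statement asserts that $a^n-1$ fails to have a \emph{primitive prime divisor} --- a prime $p$ dividing $a^n-1$ but none of $a^1-1,\dots,a^{n-1}-1$ --- precisely in the two listed cases; this is the classical theorem of Zsigmondy, and I would prove it through cyclotomic polynomials. The backward implication is a direct verification: if $n=2$ and $a=2^k-1$, then $a$ is odd, so $\Pi_{a^2-1}=\Pi_{(a-1)(a+1)}=\Pi_{a-1}$ because $a+1=2^k$ and $2\mid a-1$; and if $n=6$, $a=2$, then $63=3^2\cdot7$ with $3\mid a^2-1$ and $7\mid a^3-1$. So the substance lies in the forward direction.

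First I would record the factorization $x^n-1=\prod_{d\mid n}\Phi_d(x)$ and reformulate the problem: a prime $p$ is a primitive prime divisor of $a^n-1$ exactly when the multiplicative order of $a$ modulo $p$ equals $n$, equivalently when $p\mid\Phi_n(a)$ and $p\nmid n$. Indeed, $p\mid\Phi_n(a)$ forces $p\mid a^n-1$; if moreover $p\nmid n$, then $a$ has order exactly $n$ modulo $p$, since a smaller order $k$ would give $p\mid\Phi_d(a)$ for some $d\mid k$, and $p\mid\gcd(\Phi_n(a),\Phi_d(a))$ with $d<n$ can only occur when $p\mid n$. Conversely, every prime dividing $\gcd(\Phi_n(a),n)$ is the \emph{largest} prime factor of $n$ and divides $\Phi_n(a)$ only to the first power; this valuation step is the technical heart, and I would prove it by a lifting-the-exponent argument, writing $n=p^sm$ with $p\nmid m$ and comparing $v_p(a^{p^sm}-1)$ with $v_p(a^m-1)$.

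Granting these, suppose $a^n-1$ has no primitive prime divisor. Then no prime factor of $\Phi_n(a)$ is coprime to $n$, so by the valuation step $\Phi_n(a)$ is a first power of the largest prime $p\mid n$, i.e. $\Phi_n(a)=p\le n$. I would then invoke the analytic lower bound coming from $\Phi_n(a)=\prod_{\zeta}(a-\zeta)$ over primitive $n$th roots of unity: since $|a-\zeta|>a-1$ for every $\zeta\ne1$, one gets $\Phi_n(a)>(a-1)^{\varphi(n)}$. For $a\ge3$ this forces $2^{\varphi(n)}<n$, which holds only for a handful of small $n$ because $\varphi(n)$ grows faster than $\log_2 n$; for $a=2$ the crude bound degenerates, and I would use the sharper estimate that $\Phi_n(2)$ is comparable to $2^{\varphi(n)}$ (via $\tfrac1{2\pi}\int_0^{2\pi}\log|2-e^{i\theta}|\,d\theta=\log2$) to again confine $n$ to a finite range. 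Each surviving pair $(a,n)$, together with the $n=2$ case analysed directly above, is checked by hand, leaving exactly the exceptions (1) and (2).

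The main obstacle I anticipate is the valuation step: controlling $v_p(\Phi_n(a))$ when $p\mid n$ and ruling out that a prime of $n$ contributes a high power to $\Phi_n(a)$. This is precisely where the exception $n=6$, $a=2$ must be reconciled, since $3^2\mid 2^6-1$ yet $3\,\|\,\Phi_6(2)$, so the square comes from the factorization of $a^6-1$ rather than from $\Phi_6(2)$ itself. The secondary difficulty is the regime $a=2$, where the elementary bound $(a-1)^{\varphi(n)}=1$ carries no information and the refined growth estimate on $\Phi_n(2)$ is needed before the finite case analysis can proceed.
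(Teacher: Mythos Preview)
The paper does not prove this theorem at all: it is quoted in Section~1 as one of four classical number-theoretic inputs, with citations to Zsigmondy's original 1892 paper and Roitman's exposition, and is used only as a black box (in Lemma~20). So your proposal goes well beyond what the paper itself supplies.

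That said, your sketch follows the standard modern route via cyclotomic polynomials and is correct in outline. The reformulation of primitivity through $\Phi_n(a)$, the valuation lemma (any non-primitive prime divisor of $\Phi_n(a)$ is the largest prime factor of $n$ and appears to the first power), and the growth bound $\Phi_n(a)>(a-1)^{\varphi(n)}$ reducing to a finite check are exactly the ingredients of the usual proof, and you have correctly flagged the two genuinely delicate points: the $p$-adic valuation control and the degenerate bound at $a=2$. One cosmetic remark: your strict inequality $\Phi_n(a)>(a-1)^{\varphi(n)}$ already gives $\Phi_n(2)>1$, so the conclusion $\Phi_n(a)=p$ (rather than $\Phi_n(a)\in\{1,p\}$) is justified once you note each factor $|a-\zeta|>a-1$ is strict for $\zeta\ne 1$; it would be worth making that explicit when you write the proof out in full.
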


\section{Superconnected spaces and their superconnecting posets}\label{s:poset}

In this section we discuss superconnected topological spaces and some order structures related to such spaces.

First let us introduce some notation and recall some notions.

 For a set $A$ and $n\in\w$ let $[A]^n=\{E\subseteq A:|A|=n\}$ be the family of $n$-element subsets of $A$, and $[A]^{<\w}=\bigcup_{n\in\w}[A]^n$ be the family of all finite subsets of $A$. For a function $f:X\to Y$ and a subset $A\subseteq X$ by $f[A]$ we denote the image $\{f(a):a\in A\}$ of the set $A$ under the function $f$. 
 
  For a subset $A$ of a topological space $(X,\tau)$ by $\overline{A}$ we denote the closure of $A$ in $X$. For a point $x\in X$ we denote by $\tau_x:=\{U\in\tau:x\in U\}$ the family of all open neighborhoods of $x$ in $(X,\tau)$.
A {\em poset} is an abbreviation for a partially ordered set.

A family $\F$ of subsets of a set $X$ is called a {\em filter} if
\begin{itemize}
\item $\emptyset\notin\F$;
\item for any $A,B\in\F$ we have $A\cap B\in\F$;
\item for any sets $F\subseteq E\subseteq X$ the inclusion $F\in\F$ implies $E\in\F$.
\end{itemize}

A topological space $(X,\tau)$ is called {\em superconnected} if for any $n\in\IN$ and non-empty open sets $U_1,\dots,U_n$ the intersection $\overline{U_1}\cap\dots\cap\overline{U_n}$ is non-empty. This allows us to define the filter
$$\F_\infty=\{B\subseteq X\colon \exists U_1,\dots,U_n\in\tau\setminus\{\emptyset\}\;\;(\overline{U_1}\cap\dots\cap\overline{U_n}\subseteq B)\},$$
called the {\em superconnecting filter} of $X$. 

For every finite subset $E$ of $X$ consider the subfilter
$$\F_E:=\{B\subseteq X:\textstyle{\exists (U_x)_{x\in E}\in\prod_{x\in E}\tau_x\;\;(\;\bigcap_{x\in E}\overline{U_x}\subseteq B)}\}$$
of $\F_\infty$. Here we assume that $\F_\emptyset=\{X\}$.
It is clear that for any finite sets $E\subseteq F$ in $X$ we have $\F_E\subseteq \F_F$.

The family $$\mathfrak F=\{\F_E:E\in[X]^{<\w}\}\cup\{\F_\infty\}$$ is endowed with the inclusion partial order and is called the {\em superconnecting poset} of the superconnected space $X$. The filters $\F_\emptyset$ and $\F_\infty$ are the smallest and largest elements of the poset $\mathfrak F$, respectively. 

 The following obvious lemma shows that the superconnecting poset $\mathfrak F$ is a topological invariant of the superconnected space.

\begin{proposition}\label{p:iso} For any homeomorphism $h$ of a superconnected topological space $X$, the map $$\tilde h:\mathfrak F\to\mathfrak F,\quad \tilde h\colon\F\mapsto \{h[A]\colon A\in\F\},$$ is an order isomorphism of the superconnecting poset $\mathfrak F$.
\end{proposition}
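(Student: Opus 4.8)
The plan is to verify the three things that make up an order isomorphism of the poset $\mathfrak F$: that $\tilde h$ maps $\mathfrak F$ into itself, that it is a bijection, and that both $\tilde h$ and its inverse are monotone with respect to $\subseteq$. Everything will reduce to the elementary properties of a homeomorphism $h\colon X\to X$: it commutes with closure, i.e. $h[\overline A]=\overline{h[A]}$ for every $A\subseteq X$; it carries nonempty open sets to nonempty open sets; for each point $x$ it restricts to a bijection $\tau_x\to\tau_{h(x)}$; and, being a bijection of the underlying set, it preserves arbitrary intersections and satisfies $h^{-1}[h[A]]=A=h[h^{-1}[A]]$.

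First I would check that $\tilde h$ lands in $\mathfrak F$. Since $h$ is a bijection, $\{h[A]:A\in\F\}$ coincides with the pushforward filter $h(\F):=\{B\subseteq X:h^{-1}[B]\in\F\}$. If $B\in\F_\infty$, choose nonempty open $U_1,\dots,U_n$ with $\overline{U_1}\cap\dots\cap\overline{U_n}\subseteq B$; applying $h$ yields $\overline{h[U_1]}\cap\dots\cap\overline{h[U_n]}\subseteq h[B]$ with each $h[U_i]$ nonempty open, so $h[B]\in\F_\infty$, and running the same argument with $h^{-1}$ gives the reverse inclusion, hence $\tilde h(\F_\infty)=\F_\infty$. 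For a finite set $E\subseteq X$ and $B\in\F_E$, pick $(U_x)_{x\in E}\in\prod_{x\in E}\tau_x$ with $\bigcap_{x\in E}\overline{U_x}\subseteq B$; setting $V_{h(x)}:=h[U_x]$ we get a family $\big(V_y\big)_{y\in h[E]}$ of open neighborhoods of the points of $h[E]$ with $\bigcap_{y\in h[E]}\overline{V_y}\subseteq h[B]$, so $h[B]\in\F_{h[E]}$; again the symmetric argument with $h^{-1}$ gives equality, so $\tilde h(\F_E)=\F_{h[E]}$. In particular $\tilde h$ sends generators of $\mathfrak F$ to generators of $\mathfrak F$.

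Next, $\tilde h$ is a bijection: for every $\F\in\mathfrak F$ one has $\widetilde{h^{-1}}(\tilde h(\F))=\{h^{-1}[h[A]]:A\in\F\}=\F$ and, symmetrically, $\tilde h(\widetilde{h^{-1}}(\F))=\F$, so $\widetilde{h^{-1}}$ is a two-sided inverse of $\tilde h$. Finally, $\tilde h$ is monotone: if $\F\subseteq\mathcal G$ in $\mathfrak F$ then every member $h[A]$ of $\tilde h(\F)$ has $A\in\F\subseteq\mathcal G$, so $h[A]\in\tilde h(\mathcal G)$, whence $\tilde h(\F)\subseteq\tilde h(\mathcal G)$; applying the same to $\widetilde{h^{-1}}$ shows that $\tilde h(\F)\subseteq\tilde h(\mathcal G)$ also implies $\F\subseteq\mathcal G$. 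Therefore $\tilde h$ is an order isomorphism of $\mathfrak F$.

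Since each step is forced by the basic behaviour of a homeomorphism, there is no genuine obstacle here — this is the "obvious lemma" of the excerpt. The one point that deserves a moment's care is the well-definedness step: one must confirm that the value of $\tilde h$ on a generator $\F_E$ (respectively $\F_\infty$) is again a generator, namely $\F_{h[E]}$ (respectively $\F_\infty$), rather than merely some filter wedged between $\F_\emptyset$ and $\F_\infty$; this is exactly what the use of $h[\overline{U_x}]=\overline{h[U_x]}$ and the bijection $\tau_x\to\tau_{h(x)}$ provides.
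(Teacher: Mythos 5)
Your proof is correct and is precisely the routine verification the paper has in mind: the authors state Proposition~\ref{p:iso} as an ``obvious lemma'' and omit the argument, which amounts to exactly your computation $\tilde h(\F_E)=\F_{h[E]}$, $\tilde h(\F_\infty)=\F_\infty$ via $h[\overline{U}]=\overline{h[U]}$, together with $\widetilde{h^{-1}}$ being a two-sided monotone inverse. Nothing is missing.
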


In the following sections we will study the order properties of the poset $\mathfrak F$ for the Kirch space $(\IN,\tau_K)$ and will exploit the obtained information in the proof of the topological rigidity of the Kirch space.

\section{Proof of Theorem~\ref{t:main}}

We divide the proof of Theorem~\ref{t:main} into 22 lemmas. Our first lemma  describes the closure of an arithmetic progression in the Kirch topology.

\begin{lemma}\label{basic} For any $a,b\in\IN$ the closure $\overline{a+b\IN_0}$ of the arithmetic progression $a+b\IN_0$ in the Kirch space $(\IN,\tau_K)$ is equal to
	%basic open set $a+b\IN_0$ in $\IN_\tau$ its closure
$$\IN\cap\bigcap_{p\in\Pi_b}\big(\{0,a\}+p\IZ\big).$$
\end{lemma}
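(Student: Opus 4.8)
The plan is to prove the two inclusions separately, using the description of the Kirch subbasic open sets as arithmetic progressions $c+p\IN_0$ with $p$ prime and $p\nmid c$. Write $C:=\IN\cap\bigcap_{p\in\Pi_b}(\{0,a\}+p\IZ)$, i.e. the set of $x\in\IN$ such that for every prime $p$ dividing $b$ one has $x\equiv 0\pmod p$ or $x\equiv a\pmod p$.

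First I would show $\overline{a+b\IN_0}\subseteq C$, equivalently that every point $x\notin C$ has a Kirch-open neighborhood missing $a+b\IN_0$. If $x\notin C$, pick a prime $p\in\Pi_b$ with $x\not\equiv 0$ and $x\not\equiv a\pmod p$. Since $p\nmid x$, the set $U:=(x\bmod p)+p\IN_0$ — more precisely, the unique subbasic progression $c+p\IN_0$ with $1\le c\le p$ and $c\equiv x\pmod p$ — is a Kirch-open neighborhood of $x$. Every element of $U$ is $\equiv x\pmod p$, hence $\not\equiv a\pmod p$; but every element of $a+b\IN_0$ is $\equiv a\pmod p$ because $p\mid b$. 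Hence $U\cap(a+b\IN_0)=\emptyset$, so $x\notin\overline{a+b\IN_0}$.

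Next I would show $C\subseteq\overline{a+b\IN_0}$: fix $x\in C$ and an arbitrary basic Kirch-open neighborhood $V=c+d\IN_0$ of $x$, where $c,d$ are coprime and $d$ is square-free; I must produce an element of $V\cap(a+b\IN_0)$. For each prime $q\mid d$ we have $x\equiv c\pmod q$ (since $x\in c+d\IN_0$), and since $\gcd(c,d)=1$ this residue is nonzero. For each prime $p\mid b$, since $x\in C$ either $x\equiv a$ or $x\equiv 0\pmod p$. The idea is to build $y$ by the Chinese Remainder Theorem (Theorem~\ref{Chinese}) so that $y$ lies in both progressions: impose $y\equiv c\pmod q$ for every prime $q\mid d$, and for every prime $p\mid b$ impose $y\equiv a\pmod p$. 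These congruences are over the pairwise coprime moduli among $\{$primes dividing $d\}\cup\{$primes dividing $b\}$; the only possible conflict is at a prime $r$ dividing both $b$ and $d$, where we ask $y\equiv c$ and $y\equiv a\pmod r$. Here is where $x\in C$ is used: such $r$ divides $d$ so $x\equiv c\not\equiv 0\pmod r$, hence the alternative $x\equiv 0\pmod r$ from membership in $C$ fails, forcing $x\equiv a\pmod r$; combined with $x\equiv c\pmod r$ this gives $a\equiv c\pmod r$, so the two demands agree. Thus the system is consistent, and by Theorem~\ref{Chinese} it has infinitely many positive solutions $y$; any such $y$ satisfies $y\equiv c$ modulo every prime power dividing $d$ (as $d$ is square-free, modulo $d$ itself), so $y\in c+d\IN_0=V$, and $y\equiv a$ modulo every prime dividing $b$. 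To land in $a+b\IN_0$ rather than just $a+\mathrm{rad}(b)\IN$, I would additionally impose the congruences $y\equiv a$ modulo the full prime-power part $p^{v_p(b)}$ of $b$ for each $p\mid b$ (these moduli are still pairwise coprime, and the consistency check at shared primes is unchanged since $c\equiv a$ holds modulo the whole power once it holds — actually one must be a little careful and instead impose $y\equiv x\pmod{d}$ and $y\equiv a\pmod{b}$ and recheck consistency at primes dividing $\gcd(b,d)$ using $x\equiv a$ there). Then $y\in V\cap(a+b\IN_0)$, witnessing $x\in\overline{a+b\IN_0}$.

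The main obstacle is the bookkeeping at the primes dividing $\gcd(b,d)$: one has to verify that the "$x\equiv a$ or $x\equiv 0$" dichotomy defining $C$ interacts correctly with the constraint $\gcd(c,d)=1$ coming from $V$ being a legitimate Kirch-basic set, so that the CRT system one writes down is genuinely consistent; and one must make sure the target congruences are taken modulo the true prime-power divisors of $b$ (not just its radical) so that the solution lands in $a+b\IN_0$ exactly. Everything else is a routine application of the Chinese Remainder Theorem and the explicit form of the Kirch subbase.
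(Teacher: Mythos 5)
Your proof is correct and follows essentially the same route as the paper: the easy inclusion is the same mod-$p$ separation argument, and for the reverse inclusion both arguments produce a point $y\equiv x\pmod d$, $y\equiv a\pmod b$ inside a basic neighborhood $c+d\IN_0$, using the dichotomy defining the right-hand side at the primes dividing $\gcd(b,d)$ (where $\gcd(c,d)=1$ rules out $x\equiv 0$). The only cosmetic difference is that the paper splits into the cases $\Pi_b\subseteq\Pi_x$ and $\Pi_b\not\subseteq\Pi_x$ and settles the latter with a Euclidean/B\'ezout step solving $x-a=bu-dv$, whereas you reduce uniformly to the Chinese Remainder Theorem over the pairwise coprime moduli given by the primes of $d$ not dividing $b$ and the prime powers of $b$ (and you should pick a solution $y\ge\max(a,c)$ so it lies in both progressions $c+d\IN_0$ and $a+b\IN_0$).
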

% {\color{red}bla}
\begin{proof}
First we prove that $\overline{a+b\mathbb N_0}\subseteq \{0,a\}+p\IZ$ for every $p\in\Pi_b$.
Take any point $x\in\overline{a+b\IN_0}$ and assume that $x\notin p\IZ$. Then $x+p\IN_0$ is a neighborhood of $x$ and hence the intersection $(x+p\IN_0)\cap(a+b\IN_0)$ is not empty. Then there exist $u,v\in\IN_0$ such that $x+pu=a+bv$. Consequently, $x-a=bv-pu\in p\IZ$ and $x\in a+p\IZ$.

Next, take any point $x\in\IN\cap\bigcap_{p\in\Pi_b}(\{0,a\}+p\IZ)$. Given any  neighborhood $O_x$ of $x$ in $(\IN,\tau_K)$, we should prove that $O_x\cap(a+b\IN_0)\ne\emptyset$. By the definition of the Kirch topology there exists a square-free number $d\in\IN$ such that $d,x$ are coprime and $x+d\IN_0\subseteq O_x$.

If $\Pi_b\subseteq \Pi_x$, then $b,d$ are coprime and by Chinese Remainder Theorem $\emptyset \ne (x+d\IN_0)\cap (a+b\IN_0)\subseteq O_x\cap(a+b\IN_0)$.
So, we can assume $\Pi_b\setminus\Pi_x\ne\emptyset$. 
The choice of $x\in \bigcap_{p\in\Pi_b}(\{0,a\}+p\IZ)$ guarantees that $x\in \bigcap_{p\in\Pi_b\setminus \Pi_x}(a+p\IZ)=a+q\IZ$ where $q=\prod_{p\in \Pi_b\setminus\Pi_x}p$. Since the numbers $x$ and $d$ are coprime and $d$ is square-free, the greatest common divisor of $b$ and $d$ divides the number $q$. Since $x-a\in q\IZ$, the Euclides algorithm yields two numbers $u,v\in\IN_0$ such that $x-a=bu-dv$, which implies that $O_x\cap (a+b\IN_0)\supset (x+d\IN_0)\cap(a+b\IN_0)\ne\emptyset$.
\end{proof}

Lemma~\ref{basic} implies that the Kirch space $(\IN,\tau_K)$ is superconnected and hence possesses the superconnecting filter 
$$\F_\infty=\big\{F\subseteq\IN:\exists U_1,\dots,U_n\in\tau_K\setminus\{\emptyset\}\quad\textstyle\big(\bigcap\limits_{i=1}^n\overline{U_i}\subseteq F\big)\big\}$$and the superconnecting poset
$$\mathfrak F=\{\F_E:E\in[\IN]^{<\w}\}\cup\{\F_\infty\}$$
consisting  of the filters $$\F_E=\big\{F\subseteq\IN:\textstyle{\exists (U_x)_{x\in E}\in\prod_{x\in E}\tau_x\;\;\big(\bigcap_{x\in E}\overline{U_x}\subseteq F}\big)\big\}.$$ Here for a point $x\in\IN$ by $\tau_x:=\{U\in\tau_K:x\in U\}$ we denote the family of open neighborhoods of $x$ in the Kirch topology $\tau_K$.

For a nonempty finite subset $E\subseteq\IN$, let $\Pi_E=\bigcap_{x\in E}\Pi_x$ be the set of common prime divisors of numbers in the set $E$. Also let 
$$A_E=\{p\in\Pi:\exists k\in \IN\;\;(E\subset \{0,k\}+p\IZ)\}.$$
Observe that $\Pi_E\subseteq A_E$ and $A_E\ne\emptyset$ because $2\in A_E$. 
%{\color{red}Let $p\in\Pi_E$ and $x\in E$. Then $x=pl$ for some $l\in\IN$.}
If $E$ is a singleton, then $A_E=\Pi$;  
%{\color{red}Let $E=\{x\}$ and $p\in\Pi$. There is a number $k\in\IN$ such
% that $p\mid (x-k)$   }; 
if $|E|\ge 2$, then $A_E\subseteq\{1,\dots,\max E\}$.

 Indeed, assuming that $A_E$ contains some prime number $p>\max E$, we can find a number $k\in \{1,\dots,p-1\}$ such that $E\subseteq \{0,k\}+p\IZ$. Then for any distinct numbers $x,y\in E$ we get $x,y\in k+p\IZ$ and hence $x-y\in p\IZ$ which is not possible as $p>\max E\ge|x-y|$.

Let $\alpha_E\colon A_E\to\w$ be the unique function satisfying the following conditions:
\begin{itemize}
\item[\textup{(i)}] $0\le\alpha_E(p)<p$ for all $p\in A_E$;
\item[\textup{(ii)}] $E\subseteq\{0,\alpha_E(p)\}+p\IZ$ for all $p\in A_E$;
\item[\textup{(iii)}] $\alpha_E(2)=1$ and $\alpha_E(p)=0$ for all $p\in \Pi_E\setminus\{2\}$.
\end{itemize} 

%For two numbers $n\in\IN$ and $z\in\IZ$ by $(z\!\mod n)$ we denote the unique number $x\in\{0,\dots,n-1\}$ such that $z-x\in n\IZ$.

%{\color{red}
	\begin{lemma}\label{2ae} For any two-element set $E=\{x,y\}\subset \IN$ we have $A_E=\{2\}\cup \Pi_x\cup\Pi_y\cup\Pi_{x-y}$. %and 
		%$$
		%\alpha_E(p)=\begin{cases}1&\mbox{if $p=2$};\\
		%(y\!\!\mod p)&\mbox{if $p\in (\Pi_x\cup\Pi_{x-y})\setminus\{2\}$};\\
		%(x\!\!\mod p)&\mbox{if $p\in (\Pi_y\cup\Pi_{x-y})\setminus\{2\}$}.
		%\end{cases}
		%$$
\end{lemma}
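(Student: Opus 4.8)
The plan is to establish $A_E=\{2\}\cup\Pi_x\cup\Pi_y\cup\Pi_{x-y}$ by a direct computation with residues modulo a prime, proving the two inclusions separately. Since $E=\{x,y\}$ is a two-element set we have $x\ne y$, so $x-y\ne 0$ and the symbol $\Pi_{x-y}$ is understood as $\Pi_{|x-y|}$. For a prime $p$ I will write $\bar n$ for the residue of an integer $n$ modulo $p$; unwinding the definition of $A_E$, one has $p\in A_E$ if and only if there is $k\in\IN$ with $\{\bar x,\bar y\}\subseteq\{0,\bar k\}$ (note that $\bar k$ may equal $0$, since we only require $k\in\IN$).

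For the inclusion $\{2\}\cup\Pi_x\cup\Pi_y\cup\Pi_{x-y}\subseteq A_E$ I would argue as follows. First, $2\in A_E$: take $k=2$ if $x$ and $y$ have the same parity, and $k=1$ otherwise. If $p\in\Pi_x$, choose $k\in\IN$ with $\bar k=\bar y$; then $\bar x=0$ and $\bar y=\bar k$, so $p\in A_E$, and the case $p\in\Pi_y$ is symmetric. Finally, if $p\in\Pi_{x-y}$ then $\bar x=\bar y$, and choosing $k\in\IN$ with $\bar k=\bar x$ gives $\{\bar x,\bar y\}=\{\bar k\}\subseteq\{0,\bar k\}$, so again $p\in A_E$.

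For the reverse inclusion $A_E\subseteq\{2\}\cup\Pi_x\cup\Pi_y\cup\Pi_{x-y}$, let $p\in A_E$ and fix $k\in\IN$ with $\{\bar x,\bar y\}\subseteq\{0,\bar k\}$. If $\bar x=\bar y$, then $p\mid x-y$, so $p\in\Pi_{x-y}$. If $\bar x\ne\bar y$, then $\{\bar x,\bar y\}$ is a genuine two-element subset of $\{0,\bar k\}$, which forces $\bar k\ne 0$ and $\{\bar x,\bar y\}=\{0,\bar k\}$; in particular $0\in\{\bar x,\bar y\}$, i.e. $p$ divides $x$ or $y$, so $p\in\Pi_x\cup\Pi_y$. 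Hence in every case $p\in\{2\}\cup\Pi_x\cup\Pi_y\cup\Pi_{x-y}$, which finishes the proof.

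The statement is elementary, so I do not anticipate a serious obstacle; the only place that demands a little attention is the degenerate possibility $\bar k=0$ in the second inclusion, where the putative two-element set $\{0,\bar k\}$ collapses to a singleton — this is precisely the situation isolated by first peeling off the case $\bar x=\bar y$.
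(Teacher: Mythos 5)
Your proof follows essentially the same route as the paper's: a direct verification of both inclusions by looking at residues modulo $p$, with the same witnesses in the forward direction ($k=y$ when $p\in\Pi_x$, $k=x$ when $p\in\Pi_y$ or $p\in\Pi_{x-y}$), and a reverse inclusion that splits on whether $\bar x=\bar y$, which is just a rephrasing of the paper's split on whether $p\in\Pi_x\cup\Pi_y$; that part is fine.

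The one step that fails as written is your verification that $2\in A_E$. You take $k=2$ when $x$ and $y$ have the same parity; but if $x$ and $y$ are both odd, then $\{0,k\}+2\IZ=\{0,2\}+2\IZ=2\IZ$ contains neither $x$ nor $y$, so this $k$ is not a witness (try $E=\{1,3\}$). The slip is trivially repaired: $k=1$ works unconditionally, since $\{0,1\}+2\IZ=\IZ\supseteq E$ — this is exactly the witness used in the paper — or, if you prefer a uniform recipe, take $k=x$, which also covers all parity cases because $y\in\{0,\bar x\}+2\IZ$ whenever $y\not\equiv 0\pmod 2$ forces $y\equiv x$ or $x\equiv 0$ is irrelevant mod $2$. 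With that correction the proposal is complete and matches the paper's argument in substance.
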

	
\begin{proof} The number $p=2$ belongs to $A_E$ because $E\subset \{0,1\}+\IZ$. Each number $p\in\Pi_x$ (resp. $p\in\Pi_y$) belongs to $A_E$ because $\{x,y\}\subset\{0,y\}+p\IZ$ (resp. $\{x,y\}\subset\{0,x\}+p\IZ\}$). Each number $p\in\Pi_{x-y}$ belongs to $A_E$ because $\{x,y\}\subset x+p\IZ\subset\{0,x\}+p\IZ$. This proves that $\{2\}\cup\Pi_x\cup\Pi_y\cup\Pi_{x-y}\subseteq A_E$.

Now take any prime number $p\in A_E$ and assume that $p\notin \Pi_x\cup\Pi_y$. It follows from $\{x,y\}=E\subset\{0,\alpha_E(p)\}+p\IZ$ that $\{x,y\}\subseteq \alpha_E(p)+p\IZ$ and hence $x-y\in p\IZ$ and $p\in\Pi_{x-y}$.  
\end{proof}

The following lemma yields an arithmetic description of the filters $\F_E$.

\begin{lemma}\label{l:realization} Let $A\subset\Pi$ be a finite set such that $2\in A\ne\{2\}$ and $\alpha:A\to\IN_0$ be a function such that $\alpha(2)=1$ and $\alpha(p)\in\{0,\dots,p-1\}$ for all $p\in A\setminus\{2\}$. Let  $x$ be the product of odd  prime numbers in the set $A$ and $y$ be any number in the set $\IN\cap\bigcap_{p\in A}(\alpha(p)+p\IZ)$. Then the set $E=\{y,x,2x\}$ has $A_E=A$ and $\alpha_E=\alpha$.
\end{lemma}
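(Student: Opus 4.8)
The plan is to verify both equalities directly from the definitions of $A_E$ and $\alpha_E$, using that $E$ consists of the ``generic'' element $y$ together with $x$ and $2x$, whose sets of prime divisors are exactly the odd primes of $A$ and all of $A$, respectively. First I would record the basic facts: since $x$ is the product of the odd primes in $A$ and $A\ne\{2\}$, the number $x$ is odd and $x\ge 3$, $\Pi_x=A\setminus\{2\}$, $\Pi_{2x}=A$, and $y\equiv\alpha(p)\pmod p$ for every $p\in A$ by the choice of $y$ (such a $y$ exists by the Chinese Remainder Theorem, as the primes in $A$ are pairwise coprime).

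For the inclusion $A\subseteq A_E$: given an odd prime $p\in A$, both $x$ and $2x$ are divisible by $p$ while $y\in\alpha(p)+p\IZ$, so $E\subseteq\{0,\alpha(p)\}+p\IZ$ and hence $p\in A_E$; and for $p=2$, the element $2x$ is even while $x$ and $y$ are odd (the latter because $\alpha(2)=1$), so $E\subseteq\{0,1\}+2\IZ$ and $2\in A_E$. For the reverse inclusion $A_E\subseteq A$: if $p$ is a prime not in $A$, then $p\ne 2$ and $p\nmid x$, whence $p\nmid 2x$; if we had $p\in A_E$, a witnessing $k$ would force both $x$ and $2x$ into $k+p\IZ$ (neither being $\equiv 0\pmod p$), giving $x\equiv 2x\pmod p$, i.e.\ $p\mid x$, a contradiction. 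Together these give $A_E=A$.

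It remains to compute $\alpha_E$ on $A$ from its characterizing conditions (i)--(iii). Condition (iii) gives $\alpha_E(2)=1=\alpha(2)$ at once. For an odd prime $p\in A$ with $\alpha(p)=0$, the number $y$ is divisible by $p$, as are $x$ and $2x$, so $p\in\Pi_E$ and condition (iii) again gives $\alpha_E(p)=0=\alpha(p)$. For an odd prime $p\in A$ with $\alpha(p)\ne 0$, condition (ii) yields $E\subseteq\{0,\alpha_E(p)\}+p\IZ$; since $x\equiv 2x\equiv 0\pmod p$ but $y\not\equiv 0\pmod p$, the element $y$ must lie in $\alpha_E(p)+p\IZ$, so $\alpha_E(p)\equiv\alpha(p)\pmod p$, and condition (i) (both numbers lying in $\{0,\dots,p-1\}$) forces $\alpha_E(p)=\alpha(p)$.

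I do not expect a genuine obstacle in this lemma: it is a construction lemma, providing the realizations of pairs $(A,\alpha)$ that later arguments will feed on, rather than a hard step in itself. The only place that needs a moment's care is the dichotomy according to whether $\alpha(p)$ vanishes, since that is exactly what decides whether $p\in\Pi_E$ and hence which clause of the definition of $\alpha_E$ is in force.
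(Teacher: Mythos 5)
Your proof is correct and follows essentially the same route as the paper's: show $A\subseteq A_E$ via $E\subseteq\{0,y\}+p\IZ$ (resp.\ the parity observation for $p=2$), exclude primes outside $A$ by the $x\equiv 2x\pmod p$ contradiction, and then compute $\alpha_E$ by the same case split on whether $\alpha(p)=0$, which determines whether $p\in\Pi_E$. No gaps; nothing further is needed.
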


\begin{proof} For every prime number $p\in A$ we have $E=\{y,x,2x\}\subset\{0,y\}+p\IZ$, which implies that $p\in A_E$. Assuming that $A_E\setminus A$ contains some prime number $p$, we conclude that $x\notin p\IZ$ and hence the inclusion $\{y,x,2x\}=E\subset \{0,\alpha_E(p)\}+p\IZ$ implies $\{x,2x\}\subset\alpha_E(p)+p\IZ$ and $x=2x-x\in p\IZ$. This contradiction shows that $A_E=A$. To show that $\alpha_E=\alpha$, take any prime number $p\in A=A_E$.
If $p=2$, then $\alpha(p)=1=\alpha_E(p)$. So, we assume that $p\ne 2$. If $\alpha(p)=0$, then $y\in \alpha(p)+p\IZ=p\IZ$ and hence $p\in \Pi_E$. In this case $\alpha_E(p)=0=\alpha(p)$. If $\alpha(p)\ne 0$, then the number $y\in\alpha(p)+p\IZ$ is not divisible by $p$ and then the inclusions $\{y,x,2x\}\subseteq\{0,\alpha(p)\}+p\IZ$ and $\{y,x,2x\}=E\subset\{0,\alpha_E(p)\}+p\IZ$ imply that $\alpha(p)=\alpha_E(p)$.
\end{proof}

% aLet $p\in A_E\setminus\{2\}$. Assume that $p\notin\Pi_x$ and observe that $\{x,y\}=E\subseteq\{0,\alpha_E(p)\}+p\IZ$ implies $x\in \alpha_E(p)+p\IZ$ and hence $\alpha_E(p)=x\!\mod p$. If $y\in p\IZ$, then $p\in\Pi_y$. Otherwise $y\in k+p\IZ$ and then $p\in\Pi_{x-y}$. 
			%Conversely, if $p\in\Pi_x\cup\Pi_{x-y}$   	
	%then $E\subseteq \{0,y\}+p\IZ$ and similarly if $p\in\Pi_y$ then $E\subseteq \{0,x\}+p\IZ$. 
	%In both of the above cases $p\in A_E$.
%\end{proof}%}

%\begin{example}\label{ex:1x} For any number $x>1$ and the set $E=\{1,x\}$ we have 
%$$\Pi_E=\emptyset,\;\;A_E=\Pi_x\cup \Pi_{x-1}\;\mbox{ and }\alpha_E(A_E)=\{1\}.$$
%\end{example}

%\begin{example}\label{ex:2o} For any odd number $x>2$ and the set $E=\{2,x\}$ we have 
%$$\Pi_E=\emptyset,\;\;A_E=\{2\}\cup \Pi_x\cup \Pi_{x-2}\;\mbox{ \ and \ }\alpha_E(2)=1, \;\alpha_E(A_E\setminus\{2\})=\{2\}.$$
%\end{example}

%\begin{example}\label{ex:2e} For any even number $x>2$ and the set $E=\{2,x\}$ we have 
%$$\Pi_E=\{2\},\;\;A_E=\Pi_x\cup \Pi_{x-2}\setminus \{2\}\mbox{ \ and \ }\alpha_E(A_E)\subseteq \{2\}.$$
%\end{example}

\begin{lemma}\label{l:Kirch} For any finite subset $E\subseteq\IN$ with $|E|\ge 2$ we have
$$\F_E=\big\{B\subseteq\IN\colon \exists L\in[\Pi\setminus A_E]^{<\w}\quad\bigcap_{p\in L}p\IN\cap\bigcap_{p\in A_E\setminus\Pi_E}(\{0,\alpha_E(p)\}+p\IZ)\subseteq B\big\}.$$Here we assume that $\bigcap_{p\in\emptyset}p\IN=\IN$.
\end{lemma}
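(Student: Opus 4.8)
The plan is to establish the two inclusions between the two descriptions of $\F_E$ by unwinding the definition $\F_E=\{B:\exists(U_x)_{x\in E}\;(\bigcap_{x\in E}\overline{U_x}\subseteq B)\}$ and applying Lemma~\ref{basic} to compute each closure $\overline{U_x}$. First I would note that it suffices to work with basic neighborhoods: for each $x\in E$ a generic $U_x\in\tau_x$ contains a basic progression $x+b_x\IN_0$ with $b_x$ square-free and coprime to $x$, so $\overline{x+b_x\IN_0}\subseteq\overline{U_x}$, and conversely any such basic progression is an admissible choice of $U_x$. Hence $\F_E$ is generated by the sets $\bigcap_{x\in E}\overline{x+b_x\IN_0}$ as $(b_x)_{x\in E}$ ranges over tuples of square-free numbers with $b_x$ coprime to $x$. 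By Lemma~\ref{basic}, $\overline{x+b_x\IN_0}=\IN\cap\bigcap_{p\in\Pi_{b_x}}(\{0,x\}+p\IZ)$, so the generating set is
$$\IN\cap\bigcap_{x\in E}\ \bigcap_{p\in\Pi_{b_x}}\big(\{0,x\}+p\IZ\big).$$

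Next I would reorganize this double intersection by grouping over primes $p$. Let $P=\bigcup_{x\in E}\Pi_{b_x}$, the (finite) set of primes appearing in some $b_x$, and for $p\in P$ let $E_p=\{x\in E:p\mid b_x\}\neq\emptyset$; since each $b_x$ is coprime to $x$, no $x\in E_p$ is divisible by $p$. The generating set becomes $\IN\cap\bigcap_{p\in P}\bigcap_{x\in E_p}(\{0,x\}+p\IZ)$, and the inner intersection $\bigcap_{x\in E_p}(\{0,x\}+p\IZ)$ is a union of residue classes mod $p$ that I would analyze by cases. If $p\in A_E$, then $E\subseteq\{0,\alpha_E(p)\}+p\IZ$, and since no element of $E_p$ lies in $p\IZ$, every $x\in E_p$ is congruent to $\alpha_E(p)$; one checks $\bigcap_{x\in E_p}(\{0,x\}+p\IZ)=\{0,\alpha_E(p)\}+p\IZ$ (here I would treat separately $p\in\Pi_E$, where $\alpha_E(p)\in\{0,1\}$ and the class $\{0,\alpha_E(p)\}+p\IZ$ contains all of $\IN$ when $p=2$ or equals $p\IZ$-complement issues for odd $p\in\Pi_E$ — note that for odd $p\in\Pi_E$ we have $\alpha_E(p)=0$ so the factor is just $p\IZ$, but no $x\in E_p$ is divisible by $p$, so such $p$ cannot occur in $P$ at all, eliminating the case). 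If instead $p\notin A_E$, then $|E_p|\ge 2$ would force $E_p$, hence the whole progression structure, to violate membership in $A_E$; more precisely, when $p\notin A_E$ and $p\notin\Pi_{E_p'}$ for various sub-configurations, the intersection $\bigcap_{x\in E_p}(\{0,x\}+p\IZ)$ collapses to $p\IZ=p\IN$ (after intersecting with $\IN$) because distinct nonzero residues force the $0$ option. This gives exactly the shape $\bigcap_{p\in L}p\IN\cap\bigcap_{p\in A_E\setminus\Pi_E}(\{0,\alpha_E(p)\}+p\IZ)$ with $L=P\setminus A_E\in[\Pi\setminus A_E]^{<\w}$ and with the freedom to include or omit each $p\in A_E\setminus\Pi_E$.

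For the reverse inclusion I would show that every set of the form $\bigcap_{p\in L}p\IN\cap\bigcap_{p\in A_E\setminus\Pi_E}(\{0,\alpha_E(p)\}+p\IZ)$, for $L\in[\Pi\setminus A_E]^{<\w}$, arises as $\bigcap_{x\in E}\overline{x+b_x\IN_0}$ for a suitable square-free choice of $b_x$ coprime to $x$: namely, enumerate $A_E\setminus\Pi_E=\{p_1,\dots,p_k\}$ and $L=\{q_1,\dots,q_m\}$, and distribute these primes among the elements of $E$ so that each $p_i$ divides $b_x$ for every $x\in E$ (possible since no $x\in E$ is divisible by $p_i$, as $p_i\notin\Pi_E$ — wait, this needs $p_i\nmid x$ for \emph{all} $x\in E$, which holds iff $p_i\notin\Pi_E$; for $p_i\in A_E\setminus\Pi_E$ with $\alpha_E(p_i)\ne 0$ some $x$ may still be divisible by $p_i$, so I would instead only require $p_i\mid b_x$ for those $x$ with $p_i\nmid x$, and check the intersection is unchanged), and each $q_j$ divides $b_x$ for enough $x$ to force the $p\IN$ factor; keeping all $b_x$ square-free is automatic since we use each prime once per coordinate. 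I expect the main obstacle to be the bookkeeping in this case analysis — specifically, verifying that for $p\notin A_E$ the inner intersection really collapses to $p\IN$ (one must rule out all "partial agreement" configurations of residues among $E_p$ by invoking the definition of $A_E$ and $\alpha_E$), and correctly handling the boundary behavior at $p=2$ and at primes in $\Pi_E$, which is where the asymmetric normalization $\alpha_E(2)=1$, $\alpha_E(p)=0$ for $p\in\Pi_E\setminus\{2\}$ in condition (iii) does its work.
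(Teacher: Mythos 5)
Your overall route --- pass to basic neighborhoods $x+b_x\IN_0$, compute their closures via Lemma~\ref{basic}, and compare the two filters prime by prime --- is the same as the paper's, but the two places where the hypothesis $p\notin A_E$ must do real work are not handled correctly. In the direction $\F_E\subseteq{}$(right-hand side) you claim that for $p\in P\setminus A_E$ the factor $\bigcap_{x\in E_p}(\{0,x\}+p\IZ)$ ``collapses to $p\IZ$''. This is false in general: $E_p=\{x\in E:p\mid b_x\}$ is dictated by the arbitrarily chosen moduli $b_x$, not by $E$, so it may be a singleton, or all of its members may lie in a single nonzero residue class mod $p$, even though $p\notin A_E$ (the definition of $A_E$ constrains all of $E$, not the subset $E_p$). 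Consequently the generator $\bigcap_{x\in E}\overline{x+b_x\IN_0}$ is in general strictly larger than any set of the displayed standard form, and your assertion that it has ``exactly the shape'' is wrong. The direction is easy to save because no equality is needed: with $L=P\setminus A_E$ one only needs the inclusion of the standard set into the generator, which is immediate from $p\IN\subseteq\{0,x\}+p\IZ$ for $p\in L$ and from $\{0,x\}+p\IZ=\{0,\alpha_E(p)\}+p\IZ$ for $p\in\Pi_{b_x}\cap A_E$ (since $p\nmid x$); this is exactly how the paper argues.

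In the reverse direction the crucial step --- the one you explicitly set aside as ``the main obstacle'' --- is to show that for each $p\in L$ the factor $p\IN$ can actually be forced by a suitable choice of neighborhoods, and this is precisely where $L\cap A_E=\emptyset$ is used: since $\Pi_E\subseteq A_E$, some $x\in E$ is not divisible by $p$, and since $p\notin A_E$ we have $E\not\subseteq\{0,x\}+p\IZ$, so there is $y\in E$ with $y\not\equiv 0$ and $y\not\equiv x \pmod p$; putting $p$ into the moduli of both $x$ and $y$ gives $\overline{U_x}\cap\overline{U_y}\subseteq(\{0,x\}+p\IZ)\cap(\{0,y\}+p\IZ)=p\IZ$. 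Without this (short) argument the reverse inclusion, and hence the lemma, is not established. The rest of your sketch --- attaching a prime $p\in A_E\setminus\Pi_E$ only to those $x$ with $p\nmid x$, and keeping each $b_x$ square-free --- is fine and agrees with the paper's construction $U_x=\bigcap_{p\in L\cup A_E\setminus\Pi_x}(x+p\IN_0)$.
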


\begin{proof} It suffices to verify  two properties:
\begin{enumerate}
\item for any $(U_x)_{x\in E}\in\prod_{x\in E}\tau_x$ there exists a finite set $L\subseteq \Pi\setminus A_E$ such that
$$\bigcap_{p\in L}p\IN\cap\bigcap_{p\in A_E\setminus\Pi_E}(\{0,\alpha_E(p)\}+p\IZ)\subseteq \bigcap_{x\in E}\overline{U_x};$$
\item for any finite set $L\subseteq\Pi\setminus A_E$ there exists a sequence of neighborhoods $(U_x)_{x\in E}\in\prod_{x\in E}\tau_x$ such that
$$\bigcap_{x\in E}\overline{U_x}\subseteq \bigcap_{p\in L}p\IN\cap\bigcap_{p\in A_E\setminus\Pi_E}(\{0,\alpha_E(p)\}+p\IZ).$$
\end{enumerate}

1. Given a sequence of neighborhoods $(U_x)_{x\in E}\in\prod_{x\in E}\tau_x$, for every $x\in E$ find a square-free number $q_x>x$ such that $\Pi_{q_x}\cap \Pi_x=\emptyset$ and $x+q_x\IN_0\subseteq U_x$. We claim that the finite set $L=\bigcup_{x\in E}\Pi_{q_x}\setminus A_E$ has the required property. Given any number $z\in \bigcap\limits_{p\in L}p\IN\cap\bigcap\limits_{p\in A_E\setminus\Pi_E}(\{0,\alpha_E(p)\}+p\IZ)$, we should prove that $z\in\overline{U_x}$ for every $x\in E$. By  Lemma~\ref{basic},
$$\IN\cap \bigcap_{p\in\Pi_{q_x}}(\{0,x\}+p\IZ)=\overline{ (x+q_x\IN_0)}\subseteq\overline{U_x}.$$ So, it suffices to show that $z\in \{0,x\}+p\IZ$ for any $p\in \Pi_{q_x}$. Since the numbers $x$ and $q_x$ are coprime, $p\notin\Pi_x$ and hence $p\notin\Pi_E$. If $p\notin A_E$, then $p\in \Pi_{q_x}\setminus  A_E\subseteq L$ and hence $z\in p\IN\subseteq \{0,x\}+p\IZ$. If $p\in A_E$, then $x\in E\subseteq\{0,\alpha_E(p)\}+p\IZ$ and $x\in\alpha_E(p)+p\IZ$ (as $p\notin\Pi_x$). Then $x+p\IZ=\alpha_E(p)+p\IZ$ and  
$z\in \{0,\alpha_E(p)\}+p\IZ=\{0,x\}+p\IZ.$ 
\smallskip

2. Fix any finite set $L\subseteq \Pi\setminus  A_E$. For every $x\in E$ consider the neighborhood $U_x=\bigcap_{p\in L\cup A_E\setminus\Pi_x}(x+p\IN_0)$ of $x$ in the Kirch topology. By Lemma~\ref{basic},
$$\overline{U_x}=\IN\cap\bigcap_{p\in L\cup A_E\setminus \Pi_x}(\{0,x\}+p\IZ).$$

Given any number $z\in\bigcap_{x\in E} \overline{U_x}$, we should show that $z\in  \bigcap\limits_{p\in L}p\IN\cap\bigcap\limits_{p\in A_E\setminus \Pi_E}(\{0,\alpha_E(p)\}+p\IZ)$. 
This will follow as soon as we check that $z\in p\IN$ for all $p\in L$ and $z\in\{0,\alpha_E(p)\}+p\IZ$ for all $p\in A_E\setminus \Pi_E$.

Given any $p\in A_E\setminus\Pi_E$, we can find a point $x\in E\setminus p\IZ$ and observe that $x\in E\subseteq\{0,\alpha_E(p)\}+p\IZ$. Then $z\in \overline{U_x}\subseteq\overline{x+p\IN_0}\subseteq \{0,x\}+p\IZ=\{0,\alpha_E(p)\}+p\IZ$. 

Now take any prime number $p\in L$. Since $L\cap  A_E=\emptyset$, we conclude that $E\not\subseteq p\IZ$. So, we can fix a number $x\in E\setminus p\IZ$. Taking into account that $p\notin A_E$, we conclude that $E\not\subseteq \{0,x\}+p\IZ$ and hence there exists a number $y\in E$ such that $p\IZ\ne y+p\IZ\ne x+p\IZ$. Then 
$$z\in\overline{U_x}\cap\overline{U_y}\subseteq(\{0,x\}+p\IZ)\cap(\{0,y\}+p\IZ)=p\IZ.$$ 
\end{proof}

We shall use Lemma~\ref{l:Kirch} for an arithmetic characterization of the partial order of the superconnecting poset $\mathfrak F$ of the Kirch space.

\begin{lemma}\label{l:wo2} For two finite subsets $E,F\subseteq \IN$ with $\min\{|E|,|F|\}\ge2$ we have $\F_E\subseteq \F_F$ if and only if $$A_F\subseteq A_E,\;\; \Pi_F\setminus\{2\}\subseteq \Pi_E \mbox{ \  and \ }\alpha_E{\restriction}A_F\setminus\Pi_E=\alpha_F{\restriction}A_F\setminus\Pi_E.$$
\end{lemma}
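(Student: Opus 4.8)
The plan is to exploit the explicit description of $\F_E$ from Lemma~\ref{l:Kirch}, which writes every filter $\F_E$ (for $|E|\ge 2$) as generated by the "large" sets
$$S(E,L)=\bigcap_{p\in L}p\IN\cap\bigcap_{p\in A_E\setminus\Pi_E}\big(\{0,\alpha_E(p)\}+p\IZ\big),\qquad L\in[\Pi\setminus A_E]^{<\w}.$$
Thus $\F_E\subseteq\F_F$ holds if and only if for every finite $L'\subseteq\Pi\setminus A_F$ there is a finite $L\subseteq\Pi\setminus A_E$ with $S(E,L)\subseteq S(F,L')$. So the whole lemma reduces to an arithmetic comparison of these arithmetic-progression-type sets, to which I will apply the Chinese Remainder Theorem (Theorem~\ref{Chinese}) to produce witnessing elements.

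For the "if" direction I would assume $A_F\subseteq A_E$, $\Pi_F\setminus\{2\}\subseteq\Pi_E$ and $\alpha_E{\restriction}(A_F\setminus\Pi_E)=\alpha_F{\restriction}(A_F\setminus\Pi_E)$, fix a finite $L'\subseteq\Pi\setminus A_F$, and put $L=(L'\setminus A_E)\cup\big((A_E\setminus A_F)\setminus\Pi_E\big)$; the point is that primes in $L'\cap A_E$ and primes in $(A_E\setminus A_F)\cap\Pi_E$ are "already handled" because on those primes $S(E,\cdot)$ lands in $p\IN$ anyway (for $p\in\Pi_E$, $\alpha_E(p)\in\{0,1\}$, and one checks $p\IN\subseteq\{0,\alpha_E(p)\}+p\IZ$, with the $p=2$, $\alpha_E(2)=1$ case needing the trivial observation $2\IN\subseteq\{0,1\}+2\IZ$). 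Then I verify $S(E,L)\subseteq S(F,L')$ prime-by-prime: for $p\in L'\cap A_E$ one uses $p\in\Pi_E$ hence $S(E,L)\subseteq p\IN$; for $p\in L'\setminus A_E$ one has $p\in L$; and for $p\in A_F\setminus\Pi_F$ one splits according to whether $p\in\Pi_E$ (then $S(E,L)\subseteq p\IN\subseteq\{0,\alpha_F(p)\}+p\IZ$, using $\alpha_F(p)\in\{0,1\}$) or $p\notin\Pi_E$ (then $p\in A_F\setminus\Pi_E\subseteq A_E$ and the $\alpha$-agreement gives $S(E,L)\subseteq\{0,\alpha_E(p)\}+p\IZ=\{0,\alpha_F(p)\}+p\IZ$).

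For the "only if" direction I assume $\F_E\subseteq\F_F$ and extract the three conclusions by choosing clever test sets $L'$ and using the Chinese Remainder Theorem to show a required containment $S(E,L)\subseteq S(F,L')$ would fail otherwise. To get $A_F\subseteq A_E$: if $p\in A_F\setminus A_E$, then $p$ may be put into $L'$ (as $p\notin A_F$ is false — careful: we need $p\in\Pi\setminus A_F$, so this case must instead be caught differently), so in fact one tests with $L'=\emptyset$ and uses that $S(F,\emptyset)$ constrains the residue mod $p$ while $S(E,L)$, for $p\notin A_E$ and $p$ avoided by $L$, contains, by CRT, elements in every residue class mod $p$ — contradiction. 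To get $\Pi_F\setminus\{2\}\subseteq\Pi_E$: if an odd prime $p\in\Pi_F\setminus\Pi_E$, then $p\in A_F\setminus\Pi_F$ is false, but $p\in\Pi_F\subseteq A_F$, and $S(F,L')$ forces divisibility by $p$ (since $\alpha_F(p)=0$ and $\{0,\alpha_F(p)\}+p\IZ=p\IZ$... wait, $p\in\Pi_F$ means $p$ is dropped from the intersection $A_F\setminus\Pi_F$), so here I instead note $p\in\Pi_F$ is not directly constrained by $S(F,\cdot)$ and must argue via $\alpha$; I will realize this conclusion by the same CRT argument applied once we know $A_F\subseteq A_E$, forcing agreement of residues. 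To get the $\alpha$-agreement on $A_F\setminus\Pi_E$: for $p\in A_F\setminus\Pi_E$ (so $p\in A_E$ by the first conclusion), $S(F,\emptyset)\subseteq\{0,\alpha_F(p)\}+p\IZ$ while $S(E,L)$ has residue set $\{0,\alpha_E(p)\}$ mod $p$; if $\alpha_E(p)\neq\alpha_F(p)$, CRT yields an element of $S(E,L)$ with residue $\alpha_E(p)\notin\{0,\alpha_F(p)\}$ mod $p$ (here we use $\alpha_E(p)\neq 0$ since $p\notin\Pi_E$), contradicting $S(E,L)\subseteq S(F,\emptyset)$.

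The main obstacle I expect is the bookkeeping around the primes in $\Pi_E$ and $\Pi_F$ versus $A_E$ and $A_F$: the description in Lemma~\ref{l:Kirch} intersects only over $A_E\setminus\Pi_E$, so divisibility constraints at primes of $\Pi_E$ are \emph{automatic} (any element is either $\equiv 0$ or $\equiv\alpha_E(p)\in\{0,1\}$ suffices, but actually every element of $\bigcap p\IN$ over $L$ need not be divisible by primes of $\Pi_E$ at all) — so one must be careful that $S(E,L)$ genuinely imposes \emph{no} condition mod $p$ for $p\in\Pi_E\setminus L$, which is exactly what lets CRT run. Getting the exceptional prime $2$ (where $\alpha_E(2)=1$, $\alpha_F(2)=1$ always) to behave, and correctly identifying when a prime can be legitimately inserted into $L'$ or $L$, is the delicate part; once the correct case division is set up, each individual verification is a one-line CRT or "$p\IN\subseteq\{0,\alpha\}+p\IZ$" check.
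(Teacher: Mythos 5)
Your reduction of the filter inclusion to inclusions between the generating sets is stated backwards, and the rest of the argument inherits this reversal. Since $\F_E$ consists of all supersets of the sets $S(E,L)$, the inclusion $\F_E\subseteq\F_F$ holds if and only if for every finite $L\subseteq\Pi\setminus A_E$ there is a finite $L'\subseteq\Pi\setminus A_F$ with $S(F,L')\subseteq S(E,L)$: the generators of the \emph{larger} filter must be the \emph{smaller} sets. Your criterion (``for every $L'$ there is $L$ with $S(E,L)\subseteq S(F,L')$'') characterizes $\F_F\subseteq\F_E$ instead, and the containment you then set out to verify in the ``if'' part is actually false under the hypotheses. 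Take $F=\{1,2\}$ (so $A_F=\{2\}$, $\Pi_F=\emptyset$) and $E=\{1,p,2p\}$ for an odd prime $p$ (so $A_E=\{2,p\}$, $\Pi_E=\emptyset$, $\alpha_E(p)=1$); the three conditions of the lemma hold, $L'=\{p\}$ is admissible, and $S(F,\{p\})=p\IN$, yet every $S(E,L)$ with finite $L\subseteq\Pi\setminus A_E$ contains, by the Chinese Remainder Theorem, numbers congruent to $1$ mod $p$, so no $L$ gives $S(E,L)\subseteq S(F,\{p\})$. The concretely false step in your prime-by-prime check is ``for $p\in L'\cap A_E$ one uses $p\in\Pi_E$'': nothing forces such a $p$ to lie in $\Pi_E$, and even for $p\in\Pi_E$ the set $S(E,L)$ imposes no condition mod $p$ at all (as you yourself observe at the end), so the claimed inclusion $S(E,L)\subseteq p\IN$ fails.

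The same reversal undermines the ``only if'' part: from $\F_E\subseteq\F_F$ you may only conclude that each $S(E,L)$, being a member of $\F_E$, contains some $S(F,L')$, i.e.\ $S(F,L')\subseteq S(E,L)$; the containments $S(E,L)\subseteq S(F,L')$ you test are not available, and your derivation of $\Pi_F\setminus\{2\}\subseteq\Pi_E$ is explicitly left unresolved. The paper runs the same two ingredients (Lemma~\ref{l:Kirch} plus the Chinese Remainder Theorem) with the correct orientation: for ``only if'' it feeds the single generator $S(E,A_F\setminus A_E)\in\F_E$ into $\F_F$, obtains a finite $L\subseteq\Pi\setminus A_F$ with $S(F,L)\subseteq S(E,A_F\setminus A_E)$, and CRT then forces $A_F\setminus A_E\subseteq L\subseteq\Pi\setminus A_F$ (hence $A_F\subseteq A_E$) together with the other two conditions; for ``if'' it takes a generator $S(E,L)$ of $\F_E$ and checks $S(F,\Lambda)\subseteq S(E,L)$ for $\Lambda=L\cup(A_E\setminus A_F)$, which is exactly where your (correct) observation $p\IN\subseteq\{0,\alpha_E(p)\}+p\IZ$ is genuinely used, namely at the primes $p\in A_E\setminus A_F$. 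So the raw material in your sketch is the right one, but as written the proof establishes neither implication; it needs to be redone with the generating-set containments in the opposite direction.
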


\begin{proof} To prove the ``only if'' part, assume that $\F_E\subseteq\F_F$.  By Lemma~\ref{l:Kirch}, the set $$\bigcap_{p\in A_F\setminus A_E}p\IN\cap\bigcap_{p\in A_E\setminus\Pi_E}(\{0,\alpha_E(p)\}+p\IZ)$$ belongs to the filter $\F_E\subseteq\F_F$. 
By Lemma~\ref{l:Kirch}, there exists a finite set $L\subseteq\Pi\setminus A_F$ such that  
\begin{equation}\label{eq1}
 \bigcap_{p\in L}p\IN\cap\bigcap_{p\in A_F\setminus\Pi_F}(\{0,\alpha_F(p)\}+p\IZ)\subseteq \bigcap_{p\in A_F\setminus A_E}p\IN\cap \bigcap_{p\in A_E\setminus\Pi_E}(\{0,\alpha_E(p)\}+p\IZ).
\end{equation}
This inclusion combined with the Chinese Remainder Theorem~\ref{Chinese} implies $$A_F\setminus A_E\subseteq L\subset \Pi\setminus A_F,\;\;A_E\setminus (\Pi_E\cup\{2\})\subseteq L\cup (A_F\setminus\Pi_F)\mbox{ \  and $\alpha_E(p)=\alpha_F(p)$ for any $p\in (A_F\setminus\Pi_F)\cap (A_E\setminus\Pi_E)$,}$$
and 
\begin{equation}\label{eq2}
A_F\subseteq A_E,\;\;\Pi_F\setminus\{2\}\subseteq\Pi_E\mbox{ \  and \ }\alpha_E{\restriction}A_F\setminus\Pi_E=\alpha_F{\restriction}A_F\setminus\Pi_E.
\end{equation}

To prove the ``if'' part, assume that the condition (\ref{eq2}) holds. To prove that $\F_E\subseteq\F_F$, fix any set $\Omega\in\F_E$ and using Lemma~\ref{l:Kirch}, find a finite set $L\subseteq \Pi\setminus A_E$ such that
$$\bigcap_{p\in L}p\IN\cap\bigcap_{p\in A_E\setminus\Pi_E}(\{0,\alpha_E(p)\}+p\IZ)\subseteq\Omega.$$
Consider the finite set $\Lambda=(L\cup A_E)\setminus A_F=L\cup (A_E\setminus A_F)\supseteq L$ and observe that the condition (\ref{eq2}) implies the inclusion
\begin{equation}\label{eq3}\F_F\ni 
\bigcap_{p\in\Lambda}p\IN\cap\bigcap_{p\in A_F\setminus\Pi_F}(\{0,\alpha_F(p)\}+p\IZ)\subseteq   \bigcap_{p\in L}p\IN\cap\bigcap_{p\in A_E\setminus \Pi_E}(\{0,\alpha_E(p)\}+p\IZ)\subseteq\Omega,
\end{equation}
yielding $\Omega\in\F_F$.
\end{proof}

\begin{lemma}\label{l:wo1} For two nonempty subsets $E,F\subseteq \IN$ with $\min\{|E|,|F|\}=1$ the relation $\F_E\subseteq\F_F$ holds if and only if $|E|=1$ and $E\subseteq F$.
\end{lemma}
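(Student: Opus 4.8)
The plan is to reduce the whole statement to a single fact, which I will call $(\star)$: if $x,y\in\IN$ are distinct then $\F_{\{x\}}\not\subseteq\F_{\{y\}}$. Everything else follows from $(\star)$ together with the monotonicity $\F_{E'}\subseteq\F_{E}$ (valid for $E'\subseteq E$) recorded before Proposition~\ref{p:iso}.

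First I would prove $(\star)$. Fix distinct $x,y$ and, using the infinitude of primes, pick a prime $p$ with $p>\max\{2,|x-y|\}$ and $p\nmid xy$; then $p\in\Pi\setminus(\Pi_x\cup\Pi_y)$, the residues $0,x,y$ are pairwise distinct modulo $p$, and $p\ge 3$. By Lemma~\ref{basic} the set $B:=\overline{x+p\IN_0}=\IN\cap(\{0,x\}+p\IZ)$ belongs to $\F_{\{x\}}$. Again by Lemma~\ref{basic}, the closure of an arbitrary neighbourhood of $y$ contains some set $\IN\cap\bigcap_{q\in P}(\{0,y\}+q\IZ)$ with $P\in[\Pi\setminus\Pi_y]^{<\w}$, so to obtain $B\notin\F_{\{y\}}$ it suffices to check that no such set lies in $B$. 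Given $P$, the Chinese Remainder Theorem (Theorem~\ref{Chinese}) produces $z\in\IN$ with $z\equiv y\pmod q$ for all $q\in P$, and, when $p\notin P$, also $z\equiv r\pmod p$ for some $r\notin\{0,\,x\bmod p\}$ (possible since $p\ge 3$); this $z$ lies in $\IN\cap\bigcap_{q\in P}(\{0,y\}+q\IZ)$ but outside $B$. If instead $p\in P$, the same $z$ already misses $B$, because modulo $p$ we have $z\equiv y$ with $y\not\equiv 0$ (as $p\nmid y$) and $y\not\equiv x$ (as $0<|x-y|<p$). Hence $(\star)$.

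Granting $(\star)$, the ``if'' part is immediate from monotonicity. For ``only if'', suppose $\F_E\subseteq\F_F$ with $\min\{|E|,|F|\}=1$. If $|E|\ge 2$ then $|F|=1$, say $F=\{y\}$; picking $z\in E\setminus\{y\}$ gives $\F_{\{z\}}\subseteq\F_E\subseteq\F_{\{y\}}$, contradicting $(\star)$. So $|E|=1$, say $E=\{x\}$. If also $|F|=1$, say $F=\{y\}$, then $(\star)$ forces $x=y$, so $E=F$ and $E\subseteq F$.

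The remaining case $E=\{x\}$, $|F|\ge 2$, is the one I expect to be the real obstacle: from $\F_{\{x\}}\subseteq\F_F$ one must deduce $x\in F$. The strategy is once more to look for a basic neighbourhood $U\ni x$ with $\overline U\notin\F_F$. Combining Lemma~\ref{basic} with Lemma~\ref{l:Kirch}, one checks that for an odd prime $p\nmid x$ the candidate $\overline{x+p\IN_0}=\IN\cap(\{0,x\}+p\IZ)$ belongs to $\F_F$ if and only if $p\notin A_F$, or $p\in A_F\setminus\Pi_F$ with $\alpha_F(p)=x\bmod p$. Hence, if there exists an odd prime $p\in A_F$ with $p\nmid x$ that is ``incompatible with $x$'' --- meaning $p\in\Pi_F$, or $p\in A_F\setminus\Pi_F$ with $\alpha_F(p)\ne x\bmod p$ --- a Chinese-Remainder argument exactly parallel to the one for $(\star)$ yields $\overline{x+p\IN_0}\notin\F_F$, a contradiction. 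The core of the proof is the complementary situation, where $\Pi_F\setminus\{2\}\subseteq\Pi_x$ and $\alpha_F(p)=x\bmod p$ for every odd prime $p\in A_F\setminus\Pi_x$; here one has to show, using $|F|\ge 2$ and the precise description of $A_F$ and $\alpha_F$ afforded by Lemmas~\ref{2ae} and~\ref{l:realization}, that $x$ is forced to lie in $F$. This residue-compatibility analysis over all primes of $A_F$ is where I anticipate the genuine difficulty, and it is plausibly where the deeper inputs (Dirichlet, and perhaps Mih\u ailescu's or Zsigmondy's theorem, to rule out exceptional prime-power coincidences) come into play.
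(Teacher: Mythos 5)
The parts you actually prove are fine: your $(\star)$ argument is correct, and deducing both ``$|E|=1$'' and the singleton--singleton case from $(\star)$ together with the monotonicity $\F_{E'}\subseteq\F_E$ for $E'\subseteq E$ is a cleaner route than the paper's, which instead argues directly with a prime $p>\max(E\cup F)$ and the identity $\bigcap_{y\in E}\overline{y+p\IN_0}=p\IN$. But the proposal stops exactly where the remaining content of the lemma lies: in the case $E=\{x\}$, $|F|\ge 2$ you only describe a strategy and defer the ``complementary situation'', so as a proof there is a genuine gap.

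Moreover, that gap cannot be closed, because in your complementary situation the conclusion $x\in F$ is simply false, and no appeal to Dirichlet, Mih\u ailescu or Zsigmondy will change this. Take $F=\{2,4\}$: then $A_F=\{2\}$, so your compatibility condition is vacuous, and indeed for any $x\in\IN$ and any square-free $b$ coprime with $x$, writing $b'$ for the odd part of $b$, Lemma~\ref{basic} gives $\overline{2+b'\IN_0}\cap\overline{4+b'\IN_0}=b'\IN\subseteq\overline{x+b\IN_0}$; hence every member of $\F_{\{x\}}$ lies in $\F_{\{2,4\}}$ (equivalently, $\F_{\{2,4\}}=\F_\infty$ by Lemma~\ref{l:2max}, and $\F_{\{x\}}\subseteq\F_\infty$ always). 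So $\F_{\{x\}}\subseteq\F_F$ although $x\notin F$; similarly $\F_{\{9\}}\subseteq\F_{\{3,6\}}$. What your residue analysis does yield is the correct characterization: for $|F|\ge2$, $\F_{\{x\}}\subseteq\F_F$ iff every odd prime $p\in A_F$ with $p\nmid x$ satisfies $p\notin\Pi_F$ and $\alpha_F(p)\equiv x\pmod p$ --- a condition strictly weaker than $x\in F$. Nor should you measure yourself against the paper here: its proof of ``$z\in F$'' hinges on the displayed inclusion $(x+p\IN_0)\cap\bigcap_{q\in P\setminus\{p\}}q\IN\subseteq\bigcap_{y\in F}\overline{y+d_y\IN_0}$, which fails as soon as $p$ divides $d_y$ for two distinct $y\in F$ (exactly what happens in the example above, with $d_2=d_4=p$), so the ``only if'' half of the lemma is valid only in the weakened form just stated; note that the later use of this lemma (in Lemma~\ref{efbis}) only needs the conclusion $|E|=1$, which your argument does establish.
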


\begin{proof} The ``if'' part is trivial. To prove the ``only if'' part, assume that $\F_E\subseteq\F_F$. First we prove that $|E|=1$. Assuming that $|E|>1$ and taking into account that $\min\{|E|,|F|\}=1$, we conclude that $|F|=1$. Choose a prime number $p>\max(E\cup F)$. Since $\bigcap_{y\in E}\overline{y+p\IN_0}\in \F_E\subseteq\F_F$, for the unique number $x$ in the set $F$, there exists a square-free number $d$ such that $\Pi_d\cap\Pi_x=\emptyset$ and $\overline{x+dp\IN_0}\subseteq\bigcap_{y\in E}\overline{y+p\IN_0}$. By Lemma~\ref{basic},
$$x+qp\IN\subseteq \overline{x+dp\IN_0}\subseteq\bigcap_{y\in E}\overline{y+p\IN_0}=\bigcap_{y\in E}(\{0,y\}+p\IN_0)=p\IN_0.$$
The latter equality follows from $p>\max E$ and $|E|>1$. Then $x+dp\IN\subseteq p\IN_0$ implies $x\in p\IN_0$, which contradicts the choice of $p>\max (E\cup F)\ge x$. This contradiction shows that $|E|=1$. Let $z$ be the unique element of the set $E$.

It remains to prove that $z\in F$. To derive a contradiction, assume that $z\notin F$. Take any odd prime number $p>\max(E\cup F)$ and consider the set $\{0,z\}+p\IN_0=\overline{z+p\IN_0}\in\F_E\subseteq\F_F$. By the definition of the filter $\F_F$, for every $x\in F$ there exists a square-free number $d_x$ such that $\Pi_{d_x}\cap\Pi_x=\emptyset$ and $$\bigcap_{x\in F}\overline{x+d_x\IN_0}\subseteq \overline{z+p\IN_0}=\{0,z\}+p\IN_0.$$ Consider the set $P=\bigcup_{x\in F}\Pi_{d_x}$. 
 If $p\in \Pi_{d_x}$ for some $x\in F$, we can use the Chinese Remainder Theorem~\ref{Chinese} and find a number $$c\in (x+p\IN_0)\cap \bigcap_{q\in P\setminus\{p\}}q\IN\subseteq \bigcap_{y\in F}\overline{y+d_y\IN_0}\subseteq \{0,z\}+p\IN_0.$$
 Taking into account that $x$ is not divisible by $p$, we conclude that $c\in (x+p\IZ)\cap(z+p\IZ)$ and hence $x-z\in p\IZ$, which contradicts the choice of $p>\max(E\cup F)$. This contradiction shows that $p\notin P$. Since $p\ge 3$, we can find a number $z'\notin\{0,z\}+p\IZ$ and using the Chinese Remainder Theorem~\ref{Chinese}, find a number
 $$u\in (z'+p\IN_0)\cap\bigcap_{q\in P}q\IN\subseteq \bigcap_{x\in F}\overline{x+d_x\IN_0}\subseteq\{0,z\}+p\IZ,$$ which is a desired contradiction showing that $E\subseteq F$.   
\end{proof}

%Lemma~\ref{l:wo2} implies the following important fact.

%\begin{lemma}\label{l:wo} For any $\F\in \mathfrak F$ with $|E|\ge 2$ the upper set ${\uparrow}\F=\{\mathcal E\in\mathfrak F:\F\subseteq \mathcal E\}$ is finite.
%\end{lemma}

%The following problem seems to be of crucial importance.

%\begin{problem} Fiven a finite set $E\subset\IN$ describe the order structure of the upper set ${\uparrow}\F_E$ in the poset $\mathfrak F$.
%\end{problem} 

As we know, the largest element of the superconnecting poset $\mathfrak F$ is the superconnecting filter $\F_\infty$. This filter can be characterized as follows. 

\begin{lemma}\label{filter0} The superconnecting filter $\F_\infty$ of the Kirch space  is generated by the base consisting of the sets $q\IN$ for odd square-free numbers  $q\in\IN$, i.e.
$$\F_\infty=\{B\subseteq \IN\colon (\exists q)(q\text{ is an odd square-free})\land q\IN\subseteq B\}.$$
\end{lemma}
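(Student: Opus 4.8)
The plan is to prove the two inclusions of the claimed set identity separately, using Lemma~\ref{basic} throughout to compute closures of arithmetic progressions, and using the fact (built into the definition in Section~\ref{s:poset}) that $\F_\infty$ is a filter, hence closed under taking supersets.

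For the inclusion $\supseteq$ it suffices to check that $q\IN\in\F_\infty$ for every odd square-free $q$. The case $q=1$ is trivial since $\IN=\overline{\IN}$. If $q>1$, write $q=p_1\cdots p_k$ as a product of distinct odd primes. For each $j\le k$ I would take the two non-empty basic open sets $1+p_j\IN_0$ and $2+p_j\IN_0$ (they are basic because $1$ and $2$ are coprime to $p_j\ge 3$), and apply Lemma~\ref{basic} to get $\overline{1+p_j\IN_0}=\IN\cap(\{0,1\}+p_j\IZ)$ and $\overline{2+p_j\IN_0}=\IN\cap(\{0,2\}+p_j\IZ)$. Since modulo $p_j\ge 3$ the residue sets $\{0,1\}$ and $\{0,2\}$ intersect only in $\{0\}$, the two closures meet exactly in $p_j\IN$. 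Intersecting over $j=1,\dots,k$ yields $\bigcap_{j=1}^k p_j\IN=q\IN$, exhibiting $q\IN$ as a finite intersection of closures of non-empty open sets; thus $q\IN\in\F_\infty$, and every $B\supseteq q\IN$ lies in $\F_\infty$ as well.

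For the inclusion $\subseteq$, start from $B\in\F_\infty$, fix non-empty open $U_1,\dots,U_n$ with $\bigcap_{i=1}^n\overline{U_i}\subseteq B$, and choose inside each $U_i$ a basic progression $a_i+b_i\IN_0$ with $a_i,b_i$ coprime and $b_i$ square-free, so that $\bigcap_{i=1}^n\overline{a_i+b_i\IN_0}\subseteq B$. Let $q$ be the product of the odd primes occurring in $\bigcup_{i=1}^n\Pi_{b_i}$ (with $q=1$ if there are none); this is an odd square-free number. I would then verify $q\IN\subseteq\bigcap_{i=1}^n\overline{a_i+b_i\IN_0}$, which finishes the proof. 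Using the description $\overline{a_i+b_i\IN_0}=\IN\cap\bigcap_{p\in\Pi_{b_i}}(\{0,a_i\}+p\IZ)$ from Lemma~\ref{basic}, for $z\in q\IN$, $i\le n$ and $p\in\Pi_{b_i}$ one argues: if $p$ is odd then $p\mid q\mid z$, so $z\in p\IZ\subseteq\{0,a_i\}+p\IZ$; and if $p=2$ then $a_i$ is odd by coprimality, so $\{0,a_i\}+2\IZ=\IZ\ni z$.

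All the set-theoretic and congruence manipulations here are routine; the one point that deserves attention — and the reason the generators are taken to be \emph{odd} square-free numbers — is the behaviour of the prime $2$: whenever $2$ divides some modulus $b_i$, the associated congruence $z\equiv 0\text{ or }a_i\pmod 2$ is automatically satisfied because $a_i$ must then be odd, so the prime $2$ never contributes a genuine constraint to the intersection of closures. I expect no real obstacle beyond keeping this bookkeeping straight.
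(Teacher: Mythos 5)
Your proof is correct and follows essentially the same route as the paper: both directions rest on the closure formula of Lemma~\ref{basic}, with the generator $q\IN$ realized as an intersection of closures of progressions with residues $1$ and $2$ (the paper uses the single pair $1+q\IN_0$, $2+q\IN_0$ instead of one pair per prime divisor, an inessential difference), and the reverse inclusion handled by the same observation that only the odd primes dividing the moduli impose constraints. No gaps.
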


\begin{proof} Lemma~\ref{basic} implies that each element $F\in\F_\infty$ contains the set $q\IN$ for some odd square-free number $q$. Conversely, let $q$ be an odd square-free number. Then $U_1=1+q\IN_0,U_2=2+q\IN_0\in\tau_K$. By Lemma~\ref{basic} we have  
		$$\overline{U_1}\cap\overline{U_2}=\IN\cap\bigcap_{p\in\Pi_q}(\{0,1\}+p\IZ)\cap(\{0,2\}+p\IZ)=\IN\cap\bigcap_{p\in\Pi_q}p\IZ=  q\IN.$$
		Hence $q\IN\in\F_\infty$. 
\end{proof} 

\begin{lemma}\label{l:2max} For a nonempty subset $E\subseteq \IN$ the following conditions are equivalent:
\begin{enumerate}
\item $\F_E=\F_\infty$;
\item $A_E=\{2\}$.
\end{enumerate}
If  $|E|=2$, then the conditions \textup{(1), (2)} are equivalent to
\begin{enumerate}
\item[(3)] $E=\{2^n,2^{n+1}\}$ for some $n\in\w$.
\end{enumerate}
\end{lemma}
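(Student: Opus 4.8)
The plan is to prove $(1)\Leftrightarrow(2)$ first, using the arithmetic descriptions already available. For the implication $(2)\Rightarrow(1)$: if $A_E=\{2\}$, then since $2\in\Pi_E$ (because $\alpha_E(2)=1$ forces... wait, let me reconsider).

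The plan is to run the cycle $(2)\Rightarrow(1)\Rightarrow(2)$ and, separately, to prove $(2)\Leftrightarrow(3)$ under the extra hypothesis $|E|=2$, using the arithmetic descriptions from Lemmas~\ref{basic}, \ref{2ae}, \ref{l:Kirch} and \ref{filter0} together with the Chinese Remainder Theorem~\ref{Chinese}.

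For $(2)\Rightarrow(1)$, I would first observe that $A_E=\{2\}$ forces $|E|\ge 2$, since a singleton $E$ has $A_E=\Pi$. As $\Pi_E\subseteq A_E=\{2\}$, there are only two possibilities: $\Pi_E=\emptyset$, in which case $A_E\setminus\Pi_E=\{2\}$ and $\{0,\alpha_E(2)\}+2\IZ=\{0,1\}+2\IZ=\IN$; or $\Pi_E=\{2\}$, in which case $A_E\setminus\Pi_E=\emptyset$. In both cases the ``fixed part'' $\bigcap_{p\in A_E\setminus\Pi_E}(\{0,\alpha_E(p)\}+p\IZ)$ of the description in Lemma~\ref{l:Kirch} equals $\IN$, so $\F_E=\{B\subseteq\IN:\exists\,\text{finite }L\subseteq\Pi\setminus\{2\}\ \ \bigcap_{p\in L}p\IN\subseteq B\}$. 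Since a finite product of distinct odd primes is an odd square-free number and every odd square-free number arises this way, this set coincides with $\F_\infty$ by Lemma~\ref{filter0}.

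For $(1)\Rightarrow(2)$ I would argue by contraposition, assuming $A_E\ne\{2\}$. If $|E|=1$, then every member of $\F_E$ contains the unique point $z$ of $E$ (as $z\in U\subseteq\overline U$ for each $U\in\tau_z$), so picking an odd prime $p_0\nmid z$ gives $p_0\IN\in\F_\infty\setminus\F_E$ and hence $\F_E\ne\F_\infty$. If $|E|\ge2$, then $A_E$ contains an odd prime $p_0$ (it always contains $2$). By Lemma~\ref{l:Kirch} every member of $\F_E$ contains a set $S=\bigcap_{p\in L}p\IN\cap\bigcap_{p\in A_E\setminus\Pi_E}(\{0,\alpha_E(p)\}+p\IZ)$ with $L\subseteq\Pi\setminus A_E$ finite; I would then use Theorem~\ref{Chinese} to produce an element of $S$ lying in a \emph{nonzero} residue class modulo $p_0$. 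If $p_0\in\Pi_E$, no modulus occurring in $S$ is divisible by $p_0$, so one can prescribe the residue $1$ modulo $p_0$; if $p_0\in A_E\setminus\Pi_E$, then $p_0\notin\Pi_E$ yields $\alpha_E(p_0)\ne0$, and one prescribes the residue $\alpha_E(p_0)$. Either way $S\not\subseteq p_0\IN$, so $p_0\IN\notin\F_E$ although $p_0\IN\in\F_\infty$ (Lemma~\ref{filter0}), whence $\F_E\ne\F_\infty$.

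Finally, for $(2)\Leftrightarrow(3)$ with $E=\{x,y\}$, Lemma~\ref{2ae} gives $A_E=\{2\}\cup\Pi_x\cup\Pi_y\cup\Pi_{x-y}$, so $A_E=\{2\}$ exactly when $x$, $y$ and $|x-y|$ are all powers of $2$; writing $x=2^a$, $y=2^b$ with $a<b$, the number $2^b-2^a=2^a(2^{b-a}-1)$ is a power of $2$ iff the odd factor $2^{b-a}-1$ equals $1$, i.e. $b=a+1$, which is precisely condition~(3). I expect the main obstacle to be the Chinese-Remainder argument in $(1)\Rightarrow(2)$: one must carefully check that the moduli appearing in $S$, together with $p_0$ and the primes of $L$, are pairwise coprime so that Theorem~\ref{Chinese} applies, and one must keep the two cases $p_0\in\Pi_E$ and $p_0\in A_E\setminus\Pi_E$ apart, the crux of the second being that $p_0\notin\Pi_E$ implies $\alpha_E(p_0)\ne0$.
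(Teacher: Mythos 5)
Your argument is correct, and two of its three parts coincide with the paper's: the implication $(2)\Rightarrow(1)$ is the same reduction of Lemma~\ref{l:Kirch} to the base $\{q\IN: q \text{ odd square-free}\}$ followed by Lemma~\ref{filter0}, and the equivalence $(2)\Leftrightarrow(3)$ is the same computation with Lemma~\ref{2ae}. Where you diverge is $(1)\Rightarrow(2)$: the paper compares $E$ with the fixed witness $F=\{1,2\}$, applies the order criterion of Lemma~\ref{l:wo2} to get $\F_E\subseteq\F_F$, uses maximality of $\F_\infty$ to force $\F_E=\F_F$, and reads off $A_E\subseteq A_F=\{2\}$; you instead argue by contraposition, taking an odd prime $p_0\in A_E$ and using Lemma~\ref{l:Kirch} together with Theorem~\ref{Chinese} to show directly that $p_0\IN\in\F_\infty\setminus\F_E$ (your key observation that $p_0\notin\Pi_E$ forces $\alpha_E(p_0)\ne 0$ is right, since $\alpha_E(p_0)=0$ would give $E\subseteq p_0\IZ$). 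In effect you re-run, inside the lemma, the Chinese Remainder step that the paper has already packaged once and for all in the proof of Lemma~\ref{l:wo2}; the paper's route is shorter given that lemma, while yours is more self-contained and has the small merit of treating the case $|E|=1$ explicitly (the paper's appeal to Lemma~\ref{l:wo2} needs $\min\{|E|,|F|\}\ge 2$, and the singleton case is settled only implicitly by the fact that then $A_E=\Pi$ and every member of $\F_E$ contains the point of $E$, exactly as you note).
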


\begin{proof} %$(2)\Ra(1)$ Assume that $E=\{2^n,2^{n+1}\}$ for some $n\in\w$. It follows that $A_E=\{2\}$ and $\alpha_E(A_E)=\{1\}$. Then $\{0,\alpha_E(2)\}+2\IZ=\IZ$ and by Lemmas~\ref{l:Kirch} and \ref{filter0},  
%$$\F_D=\{F\subseteq \IN:\exists L\in[\Pi\setminus\{2\}]^{<\w}\;\;\bigcap_{p\in L}p\IN\subseteq F\}=\F_\infty.$$
%\smallskip

%$(1)\Ra(2)$ 
%{\color{red} 
$(1)\Rightarrow(2)$: Assume $\F_{E}=\F_\infty$. Consider $F=\{1,2\}$. 
It is clear that $A_F=\{2\}$ and $\Pi_F=\emptyset$. Thus $A_F\subseteq A_E$, $\Pi_F{\setminus}\{2\}\subseteq \Pi_E$ and $\alpha_F{\restriction}A_F{\setminus} \Pi_E=\alpha_E{\restriction} A_F{\setminus} \Pi_E$. Lemma~\ref{l:wo2} implies $\F_{E}\subseteq \F_F$. Since $\F_{E}=\F_\infty$ is the largest element of $\mathfrak{F}$ we get $\F_E=\F_F$. By using again Lemma~\ref{l:wo2} we get $A_E\subseteq A_F$ which implies that $A_E = \{2\}$.
\vskip3pt	
	
$(2)\Rightarrow(1)$:
%\footnote{\color{blue} What about the implication $(1)\Rightarrow(2)$? 
%At the moment it is established only under the additional condition $|E|=2$?} 
If $A_E=\{2\}$, then by the Lemma~\ref{l:Kirch}, the filter $\F_{E}$ is generated by the base consisting of the sets $q\IN$ for an odd square-free number $q\in\IN$. Therefore $\F_E=\F_\infty$ by the Lemma~\ref{filter0}.
\smallskip	
	
If $|E|=2$, then the equivalence $(2)\Leftrightarrow(3)$ follows from Lemma~\ref{2ae}.
\end{proof}

\begin{lemma}\label{l:2fix} For every $n\in\w$, the number $2^n$ is a fixed point of any homeomorphism $h$ of the Kirch space.
\end{lemma}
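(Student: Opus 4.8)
The plan is to characterize the numbers $2^n$ order-theoretically inside the superconnecting poset $\mathfrak F$, so that Proposition~\ref{p:iso} forces any homeomorphism to fix them. By Lemma~\ref{l:2max}, a singleton $\{z\}$ satisfies $\F_{\{z\}}=\F_\infty$ precisely when $A_{\{z\}}=\{2\}$, i.e. precisely when $z$ is a power of $2$ (since for a singleton $A_E=\Pi$ would require $z=1$, but $A_{\{1\}}=\Pi\ne\{2\}$ — so in fact singletons have $A_E=\Pi$ always, and the relevant observation is different). More precisely, for a singleton $E=\{z\}$ we have $A_E=\Pi$, so $\F_{\{z\}}$ is never $\F_\infty$; instead I want to use Lemma~\ref{l:wo1}: the filters $\F_{\{z\}}$ are exactly the minimal elements of $\mathfrak F\setminus\{\F_\emptyset\}$, and $\F_{\{z\}}\subseteq\F_{\{z'\}}$ iff $z=z'$, so $h$ permutes the atoms $\{\F_{\{z\}}:z\in\IN\}$ and this permutation is precisely the action of $h$ on points. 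Thus a homeomorphism $h$ fixes $z$ iff $\tilde h$ fixes $\F_{\{z\}}$.

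So the real content is: the number $2^n$ is distinguished among all of $\IN$ by an order-theoretic property of $\F_{\{2^n\}}$ in $\mathfrak F$. First I would show that a point $z$ is a power of $2$ if and only if $\F_{\{z\}}\subseteq\F_E$ for \emph{every} two-element set $E$ of the form $E=\{w,2w\}$ with $z\in\overline{w+\text{(something)}}$ — but cleaner: I expect the intended route is to note that by Lemma~\ref{l:2max}, the two-element sets $E$ with $\F_E=\F_\infty$ are exactly $\{2^n,2^{n+1}\}$. Since $\F_\infty$ is the top element of $\mathfrak F$, these sets $E=\{2^n,2^{n+1}\}$ are characterized order-theoretically: they are the two-element sets whose filter is the maximum. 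Now $h$ fixes each $\F_E$ with $\F_E=\F_\infty$ trivially, but the point is to recover the individual numbers $2^n$. The bridge is: $\{2^n\}\subseteq\{2^n,2^{n+1}\}$ and $\{2^{n+1}\}\subseteq\{2^n,2^{n+1}\}$, and one checks using Lemmas~\ref{l:wo1} and \ref{l:wo2} that the atoms below $\F_{\{2^n,2^{n+1}\}}=\F_\infty$ contained in it via the $E\subseteq F$ relation are exactly $\F_{\{2^n\}}$ and $\F_{\{2^{n+1}\}}$; more carefully, one needs that the \emph{only} singletons $\{z\}$ with $\F_{\{z\}}\subseteq\F_\infty$ forming part of such a configuration are powers of $2$.

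Concretely I would argue as follows. Every singleton filter satisfies $\F_{\{z\}}\subseteq\F_\infty$. I claim $z=2^n$ for some $n$ if and only if there exists a two-element set $E$ with $\F_E\ne\F_\infty$ minimal such that... — actually the crispest formulation: $z$ is a power of $2$ iff for the set $F=\{z,2z\}$ (or $\{z, z/2\}$ if $z$ even) we have $\F_F=\F_\infty$; but $2z$ is not order-definable from $z$. Let me instead use the direct characterization from Lemma~\ref{2ae} and \ref{l:2max}: a two-element set $E$ has $\F_E=\F_\infty$ iff $E=\{2^n,2^{n+1}\}$. Given a homeomorphism $h$, the induced $\tilde h$ is an order-automorphism of $\mathfrak F$ fixing the top $\F_\infty$; by Lemma~\ref{l:wo1} it permutes the atoms $\F_{\{z\}}$, inducing a bijection $\sigma:\IN\to\IN$ with $h=\sigma$ on points (since $h[\{z\}]=\{h(z)\}$). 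For each $n$, applying $\tilde h$ to the chain $\F_{\{2^n\}}\subseteq\F_{\{2^n,2^{n+1}\}}=\F_\infty$ and $\F_{\{2^{n+1}\}}\subseteq\F_\infty$: one must show the image $\F_{\{h(2^n),h(2^{n+1})\}}$ equals $\F_\infty$ too (it does, being $\tilde h(\F_\infty)=\F_\infty$), whence by Lemma~\ref{l:2max}(3), $\{h(2^n),h(2^{n+1})\}=\{2^m,2^{m+1}\}$ for some $m$. So $h$ maps powers of $2$ into powers of $2$, i.e. $h$ induces a bijection of $\{2^n:n\in\w\}$; moreover consecutive powers go to consecutive powers. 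Since $1=2^0$ is the unique power of $2$ equal to $1$, and $1$ can be singled out among powers of $2$ as... here is the remaining step: I would show $h(1)=1$ first (perhaps $1$ is characterized as having $A_{\{1\}}=\Pi$ maximal, or via a separate earlier argument not in this excerpt — but since the lemma references "any homeomorphism", I suspect an earlier lemma, or I will need to derive that the bijection on $\{2^n\}$ preserving the "adjacency" $\{2^n,2^{n+1}\}\mapsto\{2^m,2^{m+1}\}$ together with order-theoretic rigidity of the half-line forces the identity). The graph on $\{2^n:n\in\w\}$ with edges $\{2^n,2^{n+1}\}$ is a one-way infinite path, whose only automorphism is the identity; since $h$ induces a graph automorphism of this path, $h(2^n)=2^n$ for all $n$.

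The main obstacle I anticipate is the last step: showing that the induced map on the set of powers of $2$ is not just \emph{some} automorphism of the path graph but genuinely trivial — equivalently, pinning down which end of the path is which. The path $2^0,2^1,2^2,\dots$ has two "ends" in the combinatorial sense only if it were bi-infinite; being one-way infinite it has a unique endpoint $2^0=1$, so its only automorphism is the identity, and this is clean \emph{provided} I have correctly established that $\{h(2^n):n\in\w\}=\{2^n:n\in\w\}$ as sets and that edges map to edges \emph{and non-edges to non-edges}. Checking edges-to-edges uses $\tilde h(\F_\infty)=\F_\infty$ and Lemma~\ref{l:2max}(3); checking that $h$ restricted to powers of $2$ is surjective onto powers of $2$ follows by applying the same argument to $h^{-1}$. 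Once that is in place, the unique fixed end of the ray gives $h(1)=1$, and then induction along the path (each $2^{n+1}$ is the unique power of $2$ adjacent to $2^n$ other than $2^{n-1}$) yields $h(2^n)=2^n$ for every $n\in\w$.
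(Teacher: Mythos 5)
Your argument is correct and is essentially the paper's own proof: both characterize the edges $\{2^n,2^{n+1}\}$ via Lemma~\ref{l:2max} as the two-element sets $E$ with $\F_E=\F_\infty$, use $\tilde h(\F_\infty)=\F_\infty$ together with the same argument for $h^{-1}$ to see that $h$ restricts to an automorphism of the one-way infinite path on $\{2^n:n\in\w\}$, and then fix $2^0=1$ as the unique degree-one vertex and proceed by induction along the path. The exploratory detours in your first paragraphs (about singleton filters and atoms) are not needed, but the final argument is the intended one.
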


\begin{proof} Consider the graph $\Gamma_2=(V_2,\mathcal E)$ with set of vertices $V_2=\{2^n:n\in\w\}$ and set of edges $\mathcal E=\big\{\{2^n,2^{n+1}\}:n\in\w\big\}$. %Obserev that the graph $\Gamma_2$ is rigid in the sense that it admits a unique isomorphism. This follows from the observation that $1$ is a unique vertex of order 1 in $\Gamma_2$. Consequently, $1$ is a fixed point of any by $2$ is a unique vertex connected by an edge with the 

By Lemma~\ref{l:2max}, for every edge $E\in\mathcal E$ of the graph $\mathcal E$ we have $\F_E=\F_\infty$ and hence $\F_{h[E]}=\tilde h(\F_E)=\tilde h(\F_\infty)=\F_{\infty}$ by the topological invariance of the filter $\F_\infty$. Applying Lemma~\ref{l:2max} once more, we conclude that $h[E]\in\mathcal E$. The same argument applied to the homeomorphism $h^{-1}$ ensures that $\tilde h^{-1}[E]\in\mathcal E$ for any $E\in\mathcal E$. This means that $\tilde h$ induces an isomorphism of the graph $\Gamma_2$. Now observe that the number $2^0=1$ is a unique vertex of the graph $\Gamma_2$ that has order 1. This graph-theoretic property of the vertex $2^0$ in $\Gamma_2$ ensures that $h(2^0)=2^0$. Next, observe that $2^1$ is a unique vertex of $\Gamma_2$ that is connected with $2^0$ and hence $h(2^1)=2^1$. Proceeding by induction, we can show that $h(2^n)=2^n$ for all $n\in\w$.
\end{proof}

In the following lemmas by $\mathfrak F'$ we denote the set  of maximal elements of the poset $\mathfrak F\setminus\{\F_\infty\}$.

%By $2^{<\w}$ we shall denote the set $\{2^n:n\in\w\}$ of powers of $2$.

\begin{lemma}\label{l:max} For a finite subset $E\subseteq \IN$ the filter $\F_E$ belongs to the family $\mathfrak F'$ if and only if there exists an odd prime number $p\notin\Pi_E$ such that $A_E=\{2,p\}$.
\end{lemma}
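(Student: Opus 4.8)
The plan is to characterize membership in $\mathfrak F'$ via the arithmetic description of the partial order provided by Lemmas~\ref{l:wo2} and~\ref{l:wo1}. First observe that by Lemma~\ref{l:wo1} no filter of the form $\F_E$ with $|E|=1$ can be maximal in $\mathfrak F\setminus\{\F_\infty\}$: if $z$ is the unique point of $E$, pick any $w\ne z$, then $\F_E\subsetneq\F_{\{z,w\}}$ (using that $\F_{\{z,w\}}\ne\F_\infty$ can be arranged, e.g. by choosing $w$ so that $A_{\{z,w\}}\ne\{2\}$, which is possible since $z$ or $z-w$ can be made to have an odd prime divisor) and $\F_{\{z,w\}}$ is still not all of $\F_\infty$. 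Hence every element of $\mathfrak F'$ is of the form $\F_E$ with $|E|\ge 2$, and we work exclusively with Lemma~\ref{l:wo2} from now on.

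Next I would translate maximality into a statement about the triple $(A_E,\Pi_E\setminus\{2\},\alpha_E)$. By Lemma~\ref{l:wo2}, $\F_E\subseteq\F_F$ (for $|E|,|F|\ge 2$) is equivalent to $A_F\subseteq A_E$, $\Pi_F\setminus\{2\}\subseteq\Pi_E$, and $\alpha_E$ agreeing with $\alpha_F$ on $A_F\setminus\Pi_E$; and by Lemma~\ref{l:2max}, $\F_E=\F_\infty$ iff $A_E=\{2\}$. So $\F_E\in\mathfrak F'$ means: $A_E\supsetneq\{2\}$, and whenever $\F_E\subseteq\F_F\ne\F_\infty$ we have $\F_F=\F_E$. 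The key point is that one can \emph{realize} prescribed data: Lemma~\ref{l:realization} tells us that for any finite $A\subset\Pi$ with $2\in A\ne\{2\}$ and any admissible $\alpha$, there is a three-element set $F$ with $A_F=A$ and $\alpha_F=\alpha$. Using this I would argue both directions.

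For the ``if'' direction: suppose there is an odd prime $p\notin\Pi_E$ with $A_E=\{2,p\}$. Then $\Pi_E\cap\{2,p\}\subseteq\{2\}$, so $\alpha_E$ is determined on $\{p\}$ by a single value $\alpha_E(p)\in\{1,\dots,p-1\}$ (it is nonzero since $p\notin\Pi_E$), together with $\alpha_E(2)=1$. If $\F_E\subseteq\F_F$ with $\F_F\ne\F_\infty$, then $A_F\subseteq\{2,p\}$ and $A_F\ne\{2\}$, forcing $A_F=\{2,p\}=A_E$; moreover $\Pi_F\setminus\{2\}\subseteq\Pi_E$, and since $p\notin\Pi_E$ we get $p\notin\Pi_F$, so $\Pi_F\subseteq\{2\}=\Pi_E\cap A_F$ up to the prime $2$; finally $\alpha_E$ and $\alpha_F$ agree on $A_F\setminus\Pi_E=\{p\}$ (and on $\{2\}$ trivially), so by Lemma~\ref{l:wo2} applied in the reverse direction $\F_F\subseteq\F_E$, i.e. $\F_F=\F_E$. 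Hence $\F_E$ is maximal below $\F_\infty$.

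For the ``only if'' direction: assume $\F_E\in\mathfrak F'$, so $A_E\ne\{2\}$; I must produce an odd prime $p\notin\Pi_E$ with $A_E=\{2,p\}$. First I would show $\Pi_E\subseteq\{2\}$: if some odd prime $q\in\Pi_E$, then $\alpha_E(q)=0$, and I would use Lemma~\ref{l:realization} to build $F$ with $A_F=A_E$ and $\alpha_F$ equal to $\alpha_E$ except $\alpha_F(q)$ chosen nonzero; then $A_F=A_E$, $\Pi_F\setminus\{2\}$ can be arranged inside $\Pi_E$ (taking the realized $x$ to be a product of the odd primes of $A_E$, so $\Pi_F$ is controlled), and $\alpha_E,\alpha_F$ agree off $\Pi_E$, giving $\F_E\subseteq\F_F$; but $q\notin\Pi_F$ so $\F_F\ne\F_E$ by Lemma~\ref{l:wo2}, and $\F_F\ne\F_\infty$ since $A_F\ne\{2\}$, contradicting maximality. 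Hence $\Pi_E\subseteq\{2\}$. Then, if $A_E$ contained two distinct odd primes $p_1,p_2$, I would take $F$ realizing $A_F=\{2,p_1\}\subsetneq A_E$ with $\alpha_F$ restricting $\alpha_E$: by Lemma~\ref{l:wo2} this gives $\F_E\subsetneq\F_F\ne\F_\infty$, again a contradiction. So $A_E=\{2,p\}$ for a single odd prime $p$, and $p\notin\Pi_E$ since $\Pi_E\subseteq\{2\}$.

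The main obstacle I anticipate is the bookkeeping in the ``only if'' direction: one must choose the realizing set $F$ from Lemma~\ref{l:realization} carefully so that $\Pi_F\setminus\{2\}$ really lands inside $\Pi_E$ (so that the hypothesis $\Pi_F\setminus\{2\}\subseteq\Pi_E$ of Lemma~\ref{l:wo2} holds) while simultaneously making $A_F$ a proper subset of $A_E$ or making $\alpha_F$ differ from $\alpha_E$ on $\Pi_E$, and one must double-check that the resulting $\F_F$ is genuinely strictly above $\F_E$ and strictly below $\F_\infty$. Everything else is a direct application of the order criterion in Lemma~\ref{l:wo2}.
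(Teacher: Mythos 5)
Your proposal is correct, and its ``if'' half is essentially identical to the paper's: reduce via Lemma~\ref{l:wo2} and Lemma~\ref{l:2max} to the equalities $A_F=A_E=\{2,p\}$, $\Pi_F\cup\{2\}=\Pi_E\cup\{2\}=\{2\}$, $\alpha_F=\alpha_E$, and then run Lemma~\ref{l:wo2} backwards to get $\F_F=\F_E$ (as in the paper, one should note in passing, via Lemma~\ref{l:wo1}, that $|F|\ge 2$ so that Lemma~\ref{l:wo2} is applicable; the paper makes this remark explicitly only in the proof of the next lemma). Where you genuinely diverge is the ``only if'' half. The paper fixes one odd prime $p\in A_E$ and first proves $p\notin\Pi_E$ by comparing with the concrete sets $F=\{p,2p\}\subset G=\{1,p,2p\}$ (maximality would force $\F_F=\F_G$, which Lemma~\ref{l:wo2} forbids since $\Pi_F=\{p\}$ but $\Pi_G=\emptyset$), and then gets $A_E=\{2,p\}$ from $H=\{\alpha_E(p),p,2p\}$; all the needed data for these witnesses come from Lemma~\ref{2ae}. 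You instead invoke Lemma~\ref{l:realization} as a witness factory: first you perturb $\alpha_E$ at an odd prime $q\in\Pi_E$ to produce a strictly larger filter below $\F_\infty$ (showing $\Pi_E\subseteq\{2\}$), then you shrink $A_E$ to $\{2,p_1\}$ to rule out two odd primes in $A_E$. This works: the point you flagged as the obstacle does go through, because for odd $q\in A_E$ one has $\alpha_E(q)=0$ exactly when $q\in\Pi_E$, so for the realized triple $\{y,x,2x\}$ (with $x$ the product of the odd primes of $A_E$) one computes $\Pi_F=(\Pi_E\setminus\{2\})\setminus\{q\}\subseteq\Pi_E$, and the strictness of $\F_E\subsetneq\F_F$ follows from Lemma~\ref{l:wo2} since $q\in\Pi_E\setminus\Pi_F$ (respectively, since $A_F\subsetneq A_E$ in the second step). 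Your route is slightly more uniform (it reuses the realization lemma the paper saves for Lemma~\ref{efbis}) at the cost of this extra bookkeeping about $\Pi_F$, while the paper's explicit sets $\{p,2p\}$, $\{1,p,2p\}$, $\{\alpha_E(p),p,2p\}$ make all the invariants visible at a glance; the essential mechanism (the order criterion of Lemma~\ref{l:wo2} plus Lemma~\ref{l:2max}) is the same in both.
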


\begin{proof} To prove the ``if'' part, assume that $A_E=\{2,p\}$ and $p\notin\Pi_E$ for some odd prime number $p$. By Lemma~\ref{l:2max}, $\F_E\ne\F_\infty$. To show that the filter $\F_E$ is maximal in $\mathfrak F\setminus\{\F_\infty\}$, take any finite set $F\subset\IN$ such that $\F_E\subseteq\F_F\ne\F_\infty$. By Lemmas~\ref{l:wo2} and \ref{l:2max}, $\{2\}\ne A_F\subseteq A_E=\{2,p\}$, $\Pi_F\subseteq \Pi_E\cup\{2\}=\{2\}$, and $\alpha_F{\restriction}A_F\setminus \Pi_E=\alpha_E{\restriction}A_F\setminus \Pi_E$. It follows that $A_F=\{2,p\}=A_E$, $\Pi_F\cup\{2\}=\Pi_E\cup\{2\}$ and $\alpha_F=\alpha_E$. Applying Lemma~\ref{l:wo2}, we conclude that $\F_E=\F_F$, which means that the filter $\F_E$ is a maximal element of the poset $\mathcal F\setminus\{\F_\infty\}$.
\smallskip

To prove the ``only if'' part, assume that $\F_E\in\mathfrak F'$. By Lemma~\ref{l:2max}, $A_E\ne\{2\}$ and hence there exists an odd prime number $p\in A_E$. 
We claim that $p\notin\Pi_E$. To derive a contradiction, assume that $p\in \Pi_E$ and consider the sets $F=\{p,2p\}$ and $G=\{1,p,2p\}$. By Lemma~\ref{2ae}, $A_F=A_G=\{2,p\}$, $\Pi_F=\{p\}$, and $\Pi_G=\emptyset$. Taking into account that  $F\subset G$, $A_F=\{2,p\}\subseteq A_E$, $\Pi_F\setminus\{2\}=\{p\}\subseteq \Pi_E$ and $A_F\setminus\Pi_E\subseteq\{2\}$, we can apply Lemmas~\ref{l:wo2}, \ref{l:2max} and conclude that $\F_E\subseteq \F_F\subseteq\F_G\ne\F_\infty$. The maximality of $\F_E$ implies $\F_E=\F_F=\F_G$. By Lemma~\ref{l:wo2}, the equality $\F_G=\F_F$ implies $p\in\Pi_F\setminus\{2\}\subseteq\Pi_G=\emptyset$, which is a contradiction showing that $p\notin\Pi_E$.

Now consider the set $H=\{\alpha_E(p),p,2p\}$ and observe that $A_H=\{2,p\}$, $\Pi_H=\emptyset$  and $\alpha_H(p)=\alpha_E(p)$. Lemmas~\ref{l:wo2} and \ref{l:2max} guarantee that $\F_E\subseteq\F_H\ne\F_\infty$. By the maximality of $\F_E$, we have $\F_E=\F_H$. Applying Lemma~\ref{l:wo2} once more, we conclude that $A_E=A_H=\{2,p\}$. 
\end{proof}

Lemma~\ref{l:max} implies the following description of the set $\mathfrak F'$.

\begin{lemma} $\mathfrak F'=\{\F_{\{a,p,2p\}}:p\in\Pi\setminus\{2\},\;\;a\in\{1,\dots,p-1\}\}$.
\end{lemma}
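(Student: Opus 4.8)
The plan is to derive the claimed formula directly from Lemma~\ref{l:max}, supplemented by the elementary invariants of the three-element sets $E=\{a,p,2p\}$. For the inclusion $\{\F_{\{a,p,2p\}}:p\in\Pi\setminus\{2\},\ a\in\{1,\dots,p-1\}\}\subseteq\mathfrak F'$, I would fix an odd prime $p$ and some $a\in\{1,\dots,p-1\}$, put $E=\{a,p,2p\}$, and verify the hypothesis of Lemma~\ref{l:max}, namely $A_E=\{2,p\}$ and $p\notin\Pi_E$. The second assertion is immediate since $p\nmid a$. For the first, $2\in A_E$ always, and $p\in A_E$ because $E\subseteq\{0,a\}+p\IZ$; on the other hand, for any prime $q\notin\{2,p\}$ the membership $q\in A_E$ would force $p\equiv 2p\pmod q$ (as $q\nmid p$ and $q\nmid 2p$), i.e.\ $q\mid p$, which is absurd. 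Lemma~\ref{l:max} then gives $\F_E\in\mathfrak F'$.

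For the reverse inclusion I would take an arbitrary $\F_E\in\mathfrak F'$, invoke Lemma~\ref{l:max} to obtain an odd prime $p\notin\Pi_E$ with $A_E=\{2,p\}$, and set $a:=\alpha_E(p)$. Here one has to check $a\ne 0$: if $a=0$, then condition (ii) in the definition of $\alpha_E$ gives $E\subseteq p\IZ$, hence $p\in\Pi_E$, contradicting the choice of $p$; so $a\in\{1,\dots,p-1\}$. It then remains to identify $\F_E$ with $\F_{\{a,p,2p\}}$. Writing $H:=\{a,p,2p\}$, the computation from the first paragraph shows $A_H=\{2,p\}=A_E$ and $\Pi_H=\emptyset$, while $\alpha_H(2)=1=\alpha_E(2)$ and $\alpha_H(p)=a=\alpha_E(p)$ (the value $\alpha_H(p)=a$ because $H\subseteq\{0,a\}+p\IZ$ with $0<a<p$); combined with $\Pi_E\subseteq A_E\setminus\{p\}=\{2\}$, these facts allow me to apply Lemma~\ref{l:wo2} in both directions and conclude $\F_E=\F_H$. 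In fact this last equality $\F_E=\F_H$ is exactly the one obtained in the final paragraph of the proof of Lemma~\ref{l:max}, so one could simply quote it.

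I do not expect a genuine obstacle: the statement is a short bookkeeping consequence of Lemma~\ref{l:max}. The only point deserving a moment's attention is ruling out $\alpha_E(p)=0$, which is precisely what pins the parameter $a$ down to the range $\{1,\dots,p-1\}$, together with the routine verification of the three conditions of Lemma~\ref{l:wo2} in each direction (on $A$, on $\Pi\setminus\{2\}$, and on $\alpha$). It is also worth remarking, although this is not needed for the stated equality of sets, that distinct pairs $(p,a)$ yield distinct filters, again by Lemma~\ref{l:wo2}, so the displayed family enumerates $\mathfrak F'$ without repetition.
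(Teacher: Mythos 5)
Your proof is correct and follows exactly the route the paper intends: the paper states this lemma as an immediate consequence of Lemma~\ref{l:max}, and your verification (computing $A_{\{a,p,2p\}}=\{2,p\}$, ruling out $\alpha_E(p)=0$ via $p\notin\Pi_E$, and identifying $\F_E$ with $\F_{\{\alpha_E(p),p,2p\}}$ by Lemma~\ref{l:wo2}) is precisely the bookkeeping the paper leaves implicit — indeed the key identification $\F_E=\F_{\{\alpha_E(p),p,2p\}}$ already appears in the last paragraph of the proof of Lemma~\ref{l:max}, as you note.
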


Let $\mathfrak F''$ be the set of maximal elements of the poset $\mathfrak F\setminus(\mathfrak F'\cup\{\mathcal F_\infty\})$

\begin{lemma}\label{efbis}
	 For a finite set $E\subset\IN$, the filter $\mathcal F_E$ belongs to the family $\mathfrak F''$ if and only if one of the following conditions holds:
\begin{enumerate}
\item there exists an odd prime number $p$ such that $p\in \Pi_E$ and $A_E=\{2,p\}$;
\item there are two distinct odd prime numbers $p,q$ such that $A_E=\{2,p,q\}$ and $\Pi_E\subseteq\{2\}$.
\end{enumerate}
\end{lemma}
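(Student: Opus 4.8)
The plan is to run the same machinery used for Lemma~\ref{l:max}: compare filters via Lemma~\ref{l:wo2}, recognise $\F_\infty$ and $\mathfrak F'$ via Lemmas~\ref{l:2max} and \ref{l:max}, and manufacture witnesses with prescribed $(A_F,\alpha_F)$ via Lemma~\ref{l:realization}. First I would record the handy equivalence: for an odd prime $p\in A_E$ one has $\alpha_E(p)=0$ if and only if $p\in\Pi_E$ (``if'' is clause (iii) of the definition of $\alpha_E$; ``only if'' because $\alpha_E(p)=0$ means $E\subseteq p\IZ$). I would also note that in either case (1) or (2) the set $A_E$ is finite, so $|E|\ge 2$, and $\F_E\notin\mathfrak F'\cup\{\F_\infty\}$: in (1) the lone odd prime of $A_E=\{2,p\}$ lies in $\Pi_E$, so Lemma~\ref{l:max} does not apply; in (2) $|A_E|=3$; and $A_E\ne\{2\}$ in both cases, so $\F_E\ne\F_\infty$ by Lemma~\ref{l:2max}.

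For the ``if'' direction I would take any finite $F$ with $\F_E\subseteq\F_F\notin\mathfrak F'\cup\{\F_\infty\}$ and argue $\F_E=\F_F$. Since $|E|\ge 2$, the case $|F|\le 1$ is impossible (by Lemma~\ref{l:wo1} for $|F|=1$, and because $\F_E\ne\{\IN\}=\F_\emptyset$ for $|F|=0$), so Lemma~\ref{l:wo2} applies and gives $A_F\subseteq A_E$, $\Pi_F\setminus\{2\}\subseteq\Pi_E$, and agreement of $\alpha_E,\alpha_F$ on $A_F\setminus\Pi_E$; moreover $A_F\ne\{2\}$ since $\F_F\ne\F_\infty$. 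In case (1), $A_F=\{2,p\}$, and $\F_F\notin\mathfrak F'$ forces $p\in\Pi_F$ by Lemma~\ref{l:max}; then $\Pi_E\setminus\{2\}=\{p\}\subseteq\Pi_F$ and $\alpha_E,\alpha_F$ also agree on $A_E\setminus\Pi_F$, so Lemma~\ref{l:wo2} with $E,F$ exchanged yields $\F_F\subseteq\F_E$. In case (2), the options $A_F=\{2,p\}$ and $A_F=\{2,q\}$ are excluded, since by Lemma~\ref{l:max} they would put an odd prime into $\Pi_F\subseteq\Pi_E\cup\{2\}=\{2\}$; hence $A_F=\{2,p,q\}=A_E$, $\Pi_F\subseteq\{2\}$, and $\alpha_E=\alpha_F$ on $\{2,p,q\}$, so again $\F_F\subseteq\F_E$. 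In all cases $\F_E=\F_F$, so $\F_E$ is maximal in $\mathfrak F\setminus(\mathfrak F'\cup\{\F_\infty\})$.

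For the ``only if'' direction, let $\F_E\in\mathfrak F''$. I would first dispose of small $E$: $\F_\emptyset=\{\IN\}$ is the least element of $\mathfrak F$, and for a singleton $E=\{a\}$ one has $\F_E\subsetneq\F_{\{a,a+30\}}$ (Lemma~\ref{l:wo1} for the inclusion; $A_{\{a,a+30\}}\supseteq\{2,3,5\}$ by Lemma~\ref{2ae}, so $\F_{\{a,a+30\}}\notin\mathfrak F'\cup\{\F_\infty\}$ by Lemmas~\ref{l:2max} and \ref{l:max}), contradicting maximality; hence $|E|\ge 2$ and $A_E$ is finite. As $\F_E\ne\F_\infty$, $A_E$ contains an odd prime. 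The crucial step is the bound $|A_E|\le 3$: if $A_E$ contained two odd primes $p,q$ with $A_E\ne\{2,p,q\}$, I would apply Lemma~\ref{l:realization} to $A=\{2,p,q\}$ and $\alpha$ with $\alpha(2)=1$, $\alpha(p)=\alpha_E(p)$, $\alpha(q)=\alpha_E(q)$ to get a set $F$ with $A_F=\{2,p,q\}$ and $\alpha_F=\alpha$; using the equivalence above one checks $\Pi_F\setminus\{2\}\subseteq\Pi_E$, so $\F_E\subseteq\F_F$ by Lemma~\ref{l:wo2}, strictly since $A_E\not\subseteq A_F$, and with $\F_F\notin\mathfrak F'\cup\{\F_\infty\}$ as $|A_F|=3$ — contradicting maximality. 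Thus $|A_E|\le 3$. If $|A_E|=2$, then $A_E=\{2,p\}$ and Lemma~\ref{l:max} (since $\F_E\notin\mathfrak F'$) forces $p\in\Pi_E$, which is (1). If $|A_E|=3$, say $A_E=\{2,p,q\}$, and some odd prime of $A_E$, say $p$, lay in $\Pi_E$, then $\F_E\subsetneq\F_{\{p,2p\}}$ (Lemmas~\ref{2ae} and \ref{l:wo2}) while $\F_{\{p,2p\}}\notin\mathfrak F'\cup\{\F_\infty\}$ (Lemma~\ref{l:max}, as $p\in\Pi_{\{p,2p\}}$), again contradicting maximality; hence $\Pi_E\cap\{p,q\}=\emptyset$, and since $\Pi_E\subseteq A_E$ we get $\Pi_E\subseteq\{2\}$, which is (2).

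The step I expect to be the main obstacle is the bound on $|A_E|$ and, more generally, every place in the ``only if'' part where I must produce a filter $\F_F$ lying \emph{strictly} above $\F_E$ yet still outside $\mathfrak F'\cup\{\F_\infty\}$: a filter with $A_F=\{2,p,q\}$ but the ``wrong'' residues $\alpha_F(p),\alpha_F(q)$ need not contain $\F_E$, so one has to realise $A_F$ \emph{and} $\alpha_F$ simultaneously, which is exactly what Lemma~\ref{l:realization} provides; the remaining work is careful bookkeeping with the three clauses of Lemma~\ref{l:wo2} and the equivalence $\alpha_E(p)=0\Leftrightarrow p\in\Pi_E$.
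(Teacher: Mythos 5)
Your proof is correct and follows essentially the same route as the paper's: identifying $\F_\infty$ and $\mathfrak F'$ via Lemmas~\ref{l:2max} and \ref{l:max}, comparing filters through the data $(A_E,\Pi_E,\alpha_E)$ of Lemma~\ref{l:wo2}, and producing the witness $\{x,pq,2pq\}$ with Lemma~\ref{l:realization}. The differences are only organisational — in the ``only if'' part you first bound $|A_E|\le 3$ and then exclude odd primes from $\Pi_E$ by comparing with $\F_{\{p,2p\}}$, where the paper extracts both conclusions at once from the forced equality $\F_E=\F_F$ — and you handle the degenerate cases $|E|\le 1$, $|F|\le 1$ (and the equivalence $\alpha_E(p)=0\Leftrightarrow p\in\Pi_E$) more explicitly than the paper does.
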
 

\begin{proof} To prove the ``only if'' part, assume that $\F_E\in\mathfrak{F}''$. By Lemma~\ref{l:2max}, there is an odd prime number $p\in A_E$. If  $A_E=\{2,p\}$, then $p\in\Pi_E$ by Lemma~\ref{l:max}, and  condition (1) is satisfied. So, we assume that $\{2,p\}\ne A_E$ and find an odd prime number $q\in A_E\setminus\{2,p\}$. By Lemma~\ref{l:realization},  there is a number $x\in\IN$ such that for the set $F=\{x,pq,2pq\}$ we have $A_F=\{2,p,q\}$, $\Pi_F=\emptyset$, $\alpha_F(p)=a$ and $\alpha_F(q)=b$. Then $\F_E\subseteq \F_F$ by Lemma~\ref{l:wo2}, and $\F_F\in\mathfrak{F}\setminus(\mathfrak{F}'\cup\{\F_\infty\})$ by Lemma~\ref{l:max}. Now the maximality of the filter $\F_E$ implies that $\F_E=\F_F$ and hence $A_E=A_F=\{2,p,q\}$ and $\Pi_E\subset \Pi_F\cup\{2\}=\{2\}$, see Lemma~\ref{l:wo2}.
\smallskip

To prove the ``if'' part, we consider two cases. First we assume that $A_E=\{2,p\}$ for some $p\in \Pi_E$.	By Lemmas~\ref{l:2max}  and \ref{l:max}, $\F_{E}\in \mathfrak{F}\setminus(\{\F_\infty\}\cup \mathfrak{F}')$. To prove that $\F_E$ is a maximal element of 	$\mathfrak{F}\setminus(\{\F_\infty\}\cup \mathfrak{F}')$, take any finite set $F\subseteq\IN$ such that $\F_E\subseteq\F_F\in\mathfrak{F}\setminus(\{\F_\infty\}\cup \mathfrak{F}')$. Lemma~\ref{l:wo1} implies that $\min\{|E|,|F|\}\ge 2$ and then by Lemmas~\ref{l:wo2} and \ref{l:max}, we have $A_F=\{2,p\}$, $\Pi_F\setminus\{2\}\subseteq \{p\}$ and 
	$\alpha_E{\restriction}A_F\setminus\{p\}=\alpha_F{\restriction}A_F\setminus\{p\}$. Now notice that $p \in \Pi_F$ since otherwise $\F_F\in\mathfrak{F}'$ by Lemma~\ref{l:max}. By using again Lemma~\ref{l:wo2}  we get $\F_F=\F_E$ which means that $\F_E\in\mathfrak{F}''$. 		
	
	Now assume that there are two distinct odd prime numbers $p,q$ such that $A_E=\{2,p,q\}$ and $\Pi_E\subseteq\{2\}$. By Lemmas~\ref{l:2max}  and \ref{l:max}, $\F_{E}\in \mathfrak{F}\setminus(\{\F_\infty\}\cup \mathfrak{F}')$. To prove that $\F_E$ is a maximal element of  $\mathfrak{F}\setminus(\{\F_\infty\}\cup \mathfrak{F}')$, take any finite set $F\subseteq\IN$ such that $\F_E\subseteq \F_F\in\mathfrak{F}\setminus(\{\F_\infty\}\cup \mathfrak{F}')$. Lemma~\ref{l:wo2} implies that $A_F\subseteq \{2,p,q\}$, $\Pi_F\subseteq \{2\}$ and 
	$\alpha_E{\restriction}A_F\setminus\Pi_E=\alpha_F{\restriction}A_F\setminus\Pi_E$. Taking into account that $\F_F\notin\mathfrak F'\cup\{\F_\infty\}$ and $\Pi_F\subseteq\{2\}$, we can apply Lemmas~\ref{l:max}, \ref{l:2max} and conclude that $A_F=\{2,p,q\}$. We therefore know that $A_F=A_E$, $\Pi_E\cup\{2\}=\Pi_F\cup\{2\}$ and $\alpha_F{\restriction}A_E\setminus\Pi_F=\alpha_E{\restriction}A_E\setminus\Pi_F$. By Lemma~\ref{l:wo2}, $\F_E=\F_F$ and hence  $\F_E\in\mathfrak{F}''$. 
\end{proof}

\begin{lemma}\label{l:p2p-fix} For any homeomorphism $h$ of the Kirch space and any odd prime number $p$ we have $$\tilde{h}(\F_{\{p,2p\}})=\F_{\{p,2p\}}.$$
%,\;\;\tilde{h}(\F_{\{1,p,2p\}})=\F_{\{1,p,2p\}}\quad\mbox{and}\quad \tilde{h}(\F_{\{2,p,2p\}})=\F_{\{2,p,2p\}}.$$
\end{lemma}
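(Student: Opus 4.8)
The goal is to show that the filter $\F_{\{p,2p\}}$ is fixed by every homeomorphism $h$ of the Kirch space. The strategy is the same ``graph-theoretic'' one used in Lemma~\ref{l:2fix}: exhibit $\F_{\{p,2p\}}$ as the unique vertex of some graph, definable purely from the poset $\mathfrak F$ together with already-fixed data, having a distinguishing order/combinatorial property. Since $\tilde h$ is an order isomorphism of $\mathfrak F$ (Proposition~\ref{p:iso}) fixing $\F_\infty$ and hence permuting $\mathfrak F'$ and $\mathfrak F''$, it suffices to characterize $\F_{\{p,2p\}}$ order-theoretically among the elements of $\mathfrak F'\cup\mathfrak F''$.

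First I would analyze which elements of $\mathfrak F'$ lie below a given element of $\mathfrak F''$. By Lemma~\ref{l:max} the members of $\mathfrak F'$ are exactly the $\F_{\{a,p,2p\}}$ with $p$ odd prime and $a\in\{1,\dots,p-1\}$, and by Lemma~\ref{efbis} the members of $\mathfrak F''$ are of two types: (1) $\F_E$ with $A_E=\{2,p\}$ and $p\in\Pi_E$ (these are exactly the filters $\F_{\{p,2p\}}$, by Lemma~\ref{l:2max} and Lemma~\ref{2ae}), and (2) $\F_E$ with $A_E=\{2,p,q\}$, $p\ne q$ odd primes, $\Pi_E\subseteq\{2\}$. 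Using Lemma~\ref{l:wo2} I would compute, for each type of $\mathfrak F''$-element, the set of $\mathfrak F'$-elements above it (equivalently, $\F'$-elements $\F_F$ with $\F_G\subseteq\F_F$ for $G$ the given $\mathfrak F''$-element). For a type-(1) element $\F_{\{p,2p\}}$ the condition $\F_{\{p,2p\}}\subseteq\F_{\{a,q,2q\}}$ forces $\{2,q\}\subseteq\{2,p\}$ hence $q=p$, and then $\{p\}=\Pi_{\{a,q,2q\}}$ fails since $\Pi_{\{a,q,2q\}}=\Pi_{\{p,2p\}}\cap(\dots)$ need not contain — here I must be careful: in fact $\Pi_{\{p,2p\}}=\{p\}$ must be $\subseteq\Pi_{\{a,p,2p\}}=\emptyset$, a contradiction. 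So \emph{no} element of $\mathfrak F'$ lies above a type-(1) element of $\mathfrak F''$; i.e.\ $\F_{\{p,2p\}}$ is below no member of $\mathfrak F'$. By contrast, a type-(2) element $\F_G$ with $A_G=\{2,p,q\}$ satisfies $\F_G\subseteq\F_{\{a,p,2p\}}$ and $\F_G\subseteq\F_{a',q,2q\}$ for suitable $a,a'$, so it lies below (at least) two members of $\mathfrak F'$. This gives an \emph{intrinsic} distinction: the type-(1) elements of $\mathfrak F''$ are precisely those lying below no element of $\mathfrak F'$.

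Having isolated the set $T_1:=\{\F_{\{p,2p\}}:p\in\Pi\setminus\{2\}\}$ as a poset-invariant subset of $\mathfrak F''$, I would then pin down the individual filter $\F_{\{p,2p\}}$ by linking it to the already-fixed points $2^n$ from Lemma~\ref{l:2fix}. The natural handle is the singleton filters: by Lemma~\ref{l:wo1}, $\F_{\{x\}}\subseteq\F_{\{x\}}$ only, and $\F_{\{z\}}\subseteq\F_E$ iff $z\in E$; and by Lemma~\ref{l:wo2}/\ref{l:wo1} one sees $\F_{\{z\}}\subseteq\F_{\{p,2p\}}$ iff $z\in\{p,2p\}$ (the singleton-vs-size-$\ge2$ case). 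Since $2^0=1$ is fixed, consider the two-element set $\{1,\cdot\}$; but more directly: among all $z\in\IN$, we have $\F_{\{z\}}\subseteq\F_{\{p,2p\}}$ exactly for $z\in\{p,2p\}$, and of these exactly one, namely $2p$, has the property that $\F_{\{z/2\}}$ also sits below $\F_{\{p,2p\}}$ — wait, $p$ is odd so $z/2\notin\IN$ for $z=p$. Cleaner: $2p$ is the unique element of $\{p,2p\}$ lying in $\bigcup_n\overline{\{2^n\}}$-type relations? The robust route is: the map $h$ fixes each singleton filter's \emph{position}, and since singleton filters are order-theoretically recognizable (minimal nonzero elements sitting below a unique maximal-type filter, etc.), $\tilde h$ permutes them according to a permutation of $\IN$, which is exactly $h$ itself; so $h[\{p,2p\}]=\{p',2p'\}$ for some odd prime $p'$, and it remains to show $p'=p$. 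For this I would use that $h$ must be compatible with the already-established fixing of the powers of $2$ and with the \emph{other} invariants (e.g.\ divisibility-type relations encoded by which $\F_{\{a,p,2p\}}\in\mathfrak F'$ lie below which further structure).

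The main obstacle I anticipate is the last step — distinguishing $\F_{\{p,2p\}}$ from $\F_{\{p',2p'\}}$ for different odd primes $p,p'$, since so far the poset treats all odd primes symmetrically. I expect this is not actually provable at this stage of the paper in isolation: the lemma as stated only claims $\tilde h(\F_{\{p,2p\}})=\F_{\{p,2p\}}$, so presumably the paper has already (or will, in the surrounding argument) broken the symmetry among primes — most plausibly by first proving that $h$ fixes $\F_{\{p,2p\}}$ \emph{setwise as a family} and then, using the four number-theoretic inputs (Dirichlet, Mih\u ailescu, Zsigmondy, CRT) together with finer features of the poset that record the actual arithmetic of $p$ (for instance relations involving $\{p,2p,p^2,\dots\}$ or the interaction of different primes via sets like $\{pq,2pq\}$), deduce that the induced permutation of the odd primes is the identity. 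So my plan's crux is: (a) recognize $T_1$ inside $\mathfrak F''$ order-theoretically (done above), (b) recognize singletons order-theoretically so that $\tilde h$ restricts to the bijection $h$ on $\IN$, giving $h(\{p,2p\})=\{p',2p'\}$, and (c) invoke the prime-rigidity already available from earlier lemmas (or reduce to it) to force $p'=p$ and then $h(p)=p$, $h(2p)=2p$. Step (c) is where I would expect to lean on the classical number theory and on Lemma~\ref{l:2fix}; absent that, (a) and (b) alone only yield invariance of the whole family $\{\F_{\{p,2p\}}:p\in\Pi\setminus\{2\}\}$.
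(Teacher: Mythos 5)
Your step (a) rests on a miscomputation of the order in Lemma~\ref{l:wo2}: you applied the $\Pi$--inclusion in the wrong direction. For $\F_E\subseteq\F_F$ with $E=\{p,2p\}$ and $F=\{a,p,2p\}$, $a\in\{1,\dots,p-1\}$, the lemma asks for $A_F\subseteq A_E$, $\Pi_F\setminus\{2\}\subseteq\Pi_E$ and agreement of $\alpha_E,\alpha_F$ on $A_F\setminus\Pi_E$; here $A_F=\{2,p\}=A_E$, $\Pi_F=\emptyset\subseteq\{p\}=\Pi_E$, and $A_F\setminus\Pi_E=\{2\}$, where both $\alpha$'s equal $1$. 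Hence $\F_{\{p,2p\}}\subseteq\F_{\{a,p,2p\}}$ for \emph{every} $a\in\{1,\dots,p-1\}$ --- the exact opposite of your claim that no member of $\mathfrak F'$ lies above a type-(1) element of $\mathfrak F''$ (you demanded $\Pi_E\subseteq\Pi_F$ instead of $\Pi_F\setminus\{2\}\subseteq\Pi_E$). With the up-sets computed correctly, your proposed separation of $T_1$ from the type-(2) elements by ``lies below no element of $\mathfrak F'$'' collapses, since type-(2) elements also lie below members of $\mathfrak F'$.

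The correct computation is in fact the missing key idea, and it resolves precisely the point you flag as unprovable in step (c): the set ${\uparrow}\F_{\{p,2p\}}\cap\mathfrak F'$ equals $\{\F_{\{a,p,2p\}}:a\in\{1,\dots,p-1\}\}$ and so has cardinality $p-1$, whereas for a type-(2) element $\F_{\{x,pq,2pq\}}$ (realized via Lemma~\ref{l:realization}) the corresponding up-set is the doubleton $\{\F_{\{x,p,2p\}},\F_{\{x,q,2q\}}\}$. Since $p\mapsto p-1$ is injective on odd primes, this cardinality is an order-theoretic invariant that pins down $\F_{\{p,2p\}}$ inside $\mathfrak F''$ for every odd prime $p\ne 3$, so $\tilde h$ fixes these filters; the remaining case $\F_{\{3,6\}}$ (whose up-set also has two elements) is then the unique $\F\in\mathfrak F''$ with ${\uparrow}\F\cap\bigcup_{p\in\Pi\setminus\{2,3\}}{\uparrow}\F_{\{p,2p\}}=\emptyset$. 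Your fallback --- deferring the distinction between different primes to Dirichlet, Mih\u ailescu, Zsigmondy, or to later lemmas --- cannot work here, because Lemmas~\ref{l:12fix}, \ref{ppix}, \ref{Pix} and the subsequent graph arguments all depend on the present lemma; the symmetry among odd primes must be (and is) broken at this stage purely inside the poset, by the counting argument above. Your steps (b) and the reduction to a permutation of singletons are then unnecessary.
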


\begin{proof} By Proposition~\ref{p:iso}, the homeomorphism $h$ induces an order isomorphism $\tilde h$ of the superconnecting poset $\mathfrak F$ on the Kirch space. Then $\tilde h[\mathfrak F']=\mathfrak F'$ and $\tilde h[\mathfrak F'']=\mathfrak F''$. 

By Lemmas~\ref{efbis} and \ref{l:realization}, $\mathfrak F''=\mathfrak F''_2\cup\mathfrak F''_3$ where 
$$
\begin{aligned}
\mathfrak F''_2&=\big\{\F_{\{p,2p\}}:p\in\Pi\setminus\{2\}\big\}\quad\mbox{and}\\
\mathfrak F''_3&=\big\{\F_{\{x,pq,2pq\}}:p,q\in\Pi\setminus\{3\},\;p\ne q,\;x\in\{0,\dots,pq-1\}\setminus(p\IZ\cup q\IZ)\big\}.
\end{aligned}
$$
By Lemmas~\ref{l:wo2} and \ref{l:max}, for every filter $\F_{\{p,2p\}}\in\mathfrak F''_2$ the set 
${\uparrow}\F_{\{p,2p\}}=\{\F\in \mathfrak F':\F_{\{p,2p\}}\subset\F_E\}$ coincides with the set $\{\F_{\{a,p,2p\}}:a\in\{1,\dots,p-1\}\}$ and hence has cardinality $p-1$.

On the other hand, for any filter $\F_{\{x,pq,2pq\}}\in\mathfrak F''_3$, the set ${\uparrow}\F_{\{x,pq,2pq\}}=\{\F\in \mathfrak F':\F_{\{x,pq,2pq\}}\subset\F\}$ coincides with the doubleton $\{\F_{\{x,p,2p\}},\F_{\{x,q,2q\}}\}$. 

These order properties uniquely determine the filters $\F_{\{p,2p\}}$ for $p\in\Pi\setminus\{3\}$ and ensure that $\tilde h(\F_{\{p,2p\}})=\F_{\{p,2p\}}$ for every $p\in\Pi\setminus\{3\}$. 

Next, observe that $\F_{\{3,6\}}$ is a unique element $\F$ of $\mathfrak F''$ such that ${\uparrow}\F\cap\bigcup_{p\in\Pi\setminus\{3\}}{\uparrow}\F_{\{p,2p\}}=\emptyset$. This uniqueness order property of $\F_{\{3,6\}}$ ensures that $\tilde h(\F_{\{3,6\}})=\F_{\{3,6\}}$.
\end{proof}

Lemmas~\ref{l:2fix} and \ref{l:p2p-fix} imply

\begin{lemma}\label{l:12fix} For any homeomorphism $h$ of the Kirch space and any odd prime number $p$ we have $$\tilde{h}(\F_{\{1,p,2p\}})=\F_{\{1,p,2p\}}\quad\mbox{and}\quad \tilde{h}(\F_{\{2,p,2p\}})=\F_{\{2,p,2p\}}.$$
\end{lemma}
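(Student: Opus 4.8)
The plan is to pin down the two filters $\F_{\{1,p,2p\}}$ and $\F_{\{2,p,2p\}}$ inside the superconnecting poset $\mathfrak F$ by an order property that any order automorphism $\tilde h$ must preserve, exploiting that the numbers $1,2$ and the filter $\F_{\{p,2p\}}$ are already known to be fixed.

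First I would record the consequences of the two quoted lemmas. By Lemma~\ref{l:2fix} the numbers $1=2^0$ and $2=2^1$ are fixed points of $h$, hence $\tilde h(\F_{\{1\}})=\F_{\{h(1)\}}=\F_{\{1\}}$ and $\tilde h(\F_{\{2\}})=\F_{\{2\}}$ (recall that $\tilde h(\F_E)=\F_{h[E]}$ for every finite $E\subseteq\IN$). By Lemma~\ref{l:p2p-fix}, $\tilde h(\F_{\{p,2p\}})=\F_{\{p,2p\}}$. Since $\tilde h$ is an order isomorphism of $\mathfrak F$ fixing $\F_\infty$, it maps $\mathfrak F'$ onto itself, and therefore maps the set $\{\F\in\mathfrak F':\F_{\{p,2p\}}\subset\F\}$ onto itself; by Lemmas~\ref{l:wo2} and \ref{l:max} this set equals $\{\F_{\{a,p,2p\}}:a\in\{1,\dots,p-1\}\}$.

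The main step is to single out $\F_{\{1,p,2p\}}$ among these $p-1$ filters. I would show that it is the unique member $\F$ of $\{\F_{\{a,p,2p\}}:a\in\{1,\dots,p-1\}\}$ satisfying $\F_{\{1\}}\subseteq\F$: the inclusion $\{1\}\subseteq\{1,p,2p\}$ gives $\F_{\{1\}}\subseteq\F_{\{1,p,2p\}}$, while conversely, if $\F_{\{1\}}\subseteq\F_{\{a,p,2p\}}$ then, a singleton being involved, Lemma~\ref{l:wo1} forces $\{1\}\subseteq\{a,p,2p\}$, i.e.\ $a=1$. As $\tilde h$ fixes $\F_{\{1\}}$ and permutes $\{\F_{\{a,p,2p\}}:a\in\{1,\dots,p-1\}\}$, it must fix the unique element of the latter set containing $\F_{\{1\}}$, namely $\F_{\{1,p,2p\}}$. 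Repeating the argument with $\F_{\{2\}}$ in place of $\F_{\{1\}}$, which is legitimate since $2\in\{1,\dots,p-1\}$ for every odd prime $p$, gives $\tilde h(\F_{\{2,p,2p\}})=\F_{\{2,p,2p\}}$.

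I do not anticipate a genuine obstacle here: all of the structural work has already been done in Lemmas~\ref{l:2fix}, \ref{l:p2p-fix}, \ref{l:wo1}, \ref{l:wo2} and \ref{l:max}, and the present lemma just assembles them. The only point requiring a little care is verifying that ``containing $\F_{\{1\}}$'' (respectively, ``containing $\F_{\{2\}}$'') really does isolate the parameter $a=1$ (respectively $a=2$) among $a\in\{1,\dots,p-1\}$ — which is exactly what Lemma~\ref{l:wo1} delivers for the singletons $\{1\}$ and $\{2\}$.
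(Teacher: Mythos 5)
Your argument is correct and is essentially the paper's (unwritten) argument: the paper states this lemma as an immediate consequence of Lemmas~\ref{l:2fix} and \ref{l:p2p-fix}, and your proof supplies exactly the intended details, using the fixedness of $\F_{\{1\}}$, $\F_{\{2\}}$ and $\F_{\{p,2p\}}$ together with Lemma~\ref{l:wo1} to isolate $a=1$ (resp.\ $a=2$) among the filters $\F_{\{a,p,2p\}}$ lying above $\F_{\{p,2p\}}$ in $\mathfrak F'$. No gaps.
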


\begin{lemma}\label{ppix} For an integer number $x\ge 3$ and an odd prime $p$, the following conditions are equivalent:
\begin{enumerate}
\item $p\in\Pi_x$;
\item $\F_{\{1,x\}}\subseteq \F_{\{1,p,2p\}}$ and $\F_{\{2,x\}}\subseteq \F_{\{2,p,2p\}}$.
\end{enumerate}
\end{lemma}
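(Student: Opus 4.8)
The plan is to translate both inclusions in condition~(2) into the combinatorial order criterion of Lemma~\ref{l:wo2}. The first task is therefore to compute the invariants $A$, $\Pi_{(\cdot)}$ and $\alpha_{(\cdot)}$ of the four sets $\{1,x\}$, $\{2,x\}$, $\{1,p,2p\}$, $\{2,p,2p\}$ (all of which have at least two elements, since $x\ge 3$). For the two-element sets Lemma~\ref{2ae} gives $A_{\{1,x\}}=\{2\}\cup\Pi_x\cup\Pi_{x-1}$ and $A_{\{2,x\}}=\{2\}\cup\Pi_x\cup\Pi_{x-2}$, while $\Pi_{\{1,x\}}=\emptyset$ and $\Pi_{\{2,x\}}=\{2\}\cap\Pi_x$. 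For the three-element sets a direct computation from the definition of $A_E$ (or, for the first one, Lemma~\ref{l:realization}) yields $A_{\{1,p,2p\}}=A_{\{2,p,2p\}}=\{2,p\}$ and $\Pi_{\{1,p,2p\}}=\Pi_{\{2,p,2p\}}=\emptyset$; and since $p\ge 3$ one reads off $\alpha_{\{1,p,2p\}}(p)=1$ and $\alpha_{\{2,p,2p\}}(p)=2$.

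The second step is to feed these data into Lemma~\ref{l:wo2}, applied once with $E=\{1,x\}$, $F=\{1,p,2p\}$ and once with $E=\{2,x\}$, $F=\{2,p,2p\}$. Here the crucial simplification is that the $\alpha$-compatibility clause of Lemma~\ref{l:wo2} is automatic: since $p$ divides neither $1$ nor $2$, whenever $p\in A_{\{1,x\}}$ one necessarily has $\alpha_{\{1,x\}}(p)=1=\alpha_{\{1,p,2p\}}(p)$, and whenever $p\in A_{\{2,x\}}$ one necessarily has $\alpha_{\{2,x\}}(p)=2=\alpha_{\{2,p,2p\}}(p)$; moreover $\alpha(2)=1$ always, and the clause $\Pi_F\setminus\{2\}\subseteq\Pi_E$ is trivial because $\Pi_F=\emptyset$ in both cases. (The parity of $x$ only changes $\Pi_{\{2,x\}}$ between $\emptyset$ and $\{2\}$, hence changes $A_F\setminus\Pi_E$ between $\{2,p\}$ and $\{p\}$, but in either case only the value at $p$ has to be compared.) Consequently Lemma~\ref{l:wo2} reduces $\F_{\{1,x\}}\subseteq\F_{\{1,p,2p\}}$ to $\{2,p\}\subseteq\{2\}\cup\Pi_x\cup\Pi_{x-1}$, i.e. to $p\in\Pi_x\cup\Pi_{x-1}$, and reduces $\F_{\{2,x\}}\subseteq\F_{\{2,p,2p\}}$ to $p\in\Pi_x\cup\Pi_{x-2}$.

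The last step combines the two reductions: condition~(2) holds if and only if $p\in(\Pi_x\cup\Pi_{x-1})\cap(\Pi_x\cup\Pi_{x-2})=\Pi_x\cup(\Pi_{x-1}\cap\Pi_{x-2})$. Since $x-1$ and $x-2$ are consecutive positive integers, they are coprime, so $\Pi_{x-1}\cap\Pi_{x-2}=\emptyset$ and the set above equals $\Pi_x$; thus (2) is equivalent to $p\in\Pi_x$, which is exactly~(1). I expect the only routine-but-fiddly part to be the bookkeeping of the invariants (in particular pinning down the $\alpha$-values and tracking the parity of $x$); the conceptual core of the argument is the coprimality of $x-1$ and $x-2$, which is precisely what rules out the ``false positive'' in which $p$ divides both $x-1$ and $x-2$ without dividing $x$.
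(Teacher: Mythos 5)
Your proposal is correct and follows essentially the same route as the paper: compute $A$, $\Pi$, $\alpha$ for the four sets via Lemma~\ref{2ae} and the definition, translate both inclusions through Lemma~\ref{l:wo2} (where the $\alpha$- and $\Pi$-clauses are automatic), and conclude from $p\in(\Pi_x\cup\Pi_{x-1})\cap(\Pi_x\cup\Pi_{x-2})$ with $p$ odd that $p\in\Pi_x$ because $x-1$ and $x-2$ are coprime. The only (harmless) difference is organizational: you prove an if-and-only-if reduction of each inclusion separately and intersect, while the paper argues the two implications of the lemma directly.
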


\begin{proof} If $p\in\Pi_x$, then $A_{\{1,p,2p\}}=\{2,p\}\subseteq A_{\{1,x\}}$, $\Pi_{\{1,x\}}=\emptyset=\Pi_{\{1,p,2p\}}$  and $\alpha_{\{1,x\}}(p)=1=\alpha_{\{1,p,2p\}}(p)$. By Lemma~\ref{l:wo2}, $\F_{\{1,x\}}\subseteq\F_{\{1,p,2p\}}$. By analogy we can prove that $\F_{\{2,x\}}\subseteq \F_{\{2,p,2p\}}$.

Conversely, assume $\F_{\{1,x\}}\subseteq \F_{\{1,p,2p\}}$ and $\F_{\{2,x\}}\subseteq \F_{\{2,p,2p\}}$. By Lemmas~\ref{l:wo2} and \ref{2ae}, we have
$$\{2,p\}=A_{\{1,p,2p\}}\subseteq A_{\{1,x\}}=\Pi_{x}\cup\Pi_{x-1}
\text{ and }\{2,p\}=A_{\{2,p,2p\}}\subseteq A_{\{2,x\}}=\{2\}\cup\Pi_x\cup\Pi_{x-2}$$
and hence $p\in (\Pi_x\cup\Pi_{x-1})\cap(\Pi_x\cap\Pi_{x-2})\setminus\{2\}\subseteq \Pi_x$.
\end{proof}

%\begin{lemma}\label{lst1} For every homeomorphism $h$ of the Kirch space and every $x\ge 3$ we have $\Pi_x\setminus\{2\}=\Pi_{h(x)}\setminus\{2\}$.
%\end{lemma}

Proposition~\ref{p:iso} and Lemmas~\ref{l:2fix}, \ref{l:12fix}, \ref{ppix} imply

\begin{lemma}\label{Pix} For every homeomorphism $h$ of the Kirch space and any number $x\in\IN$ we have $$\Pi_x\cup\{2\}=\Pi_{h(x)}\cup\{2\}.$$ 
\end{lemma}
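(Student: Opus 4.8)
The plan is to deduce Lemma~\ref{Pix} by a bootstrapping argument from the three ingredients cited just above it. First I would fix a homeomorphism $h$ of the Kirch space and a number $x\in\IN$. The cases $x\in\{1,2\}$ and $x=2^n$ are already handled: by Lemma~\ref{l:2fix} every power of $2$ is fixed by $h$, so for such $x$ we trivially have $\Pi_x\cup\{2\}=\Pi_{h(x)}\cup\{2\}$ (both sides equal $\{2\}$ when $x>1$, and both are empty when $x=1$). So I may assume $x\ge 3$ and that $x$ is not a power of $2$; in particular $x$ has at least one odd prime divisor, and it suffices to prove that the odd prime divisors of $x$ coincide with those of $h(x)$.

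The core of the argument is the following chain of equivalences, valid for an odd prime $p$ and an integer $x\ge 3$:
\begin{equation*}
p\in\Pi_x \iff \bigl(\F_{\{1,x\}}\subseteq\F_{\{1,p,2p\}}\text{ and }\F_{\{2,x\}}\subseteq\F_{\{2,p,2p\}}\bigr) \iff \bigl(\F_{\{1,h(x)\}}\subseteq\F_{\{1,p,2p\}}\text{ and }\F_{\{2,h(x)\}}\subseteq\F_{\{2,p,2p\}}\bigr) \iff p\in\Pi_{h(x)}.
\end{equation*}
The first and last equivalences are exactly Lemma~\ref{ppix} applied to $x$ and to $h(x)$ respectively (one needs $h(x)\ge 3$, which holds because $h$ fixes $1$ and $2$ by Lemma~\ref{l:2fix}, so $h(x)\notin\{1,2\}$ when $x\notin\{1,2\}$). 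The middle equivalence is where the invariance enters: by Proposition~\ref{p:iso} the induced map $\tilde h$ is an order isomorphism of $\mathfrak F$, so $\F_{\{1,x\}}\subseteq\F_{\{1,p,2p\}}$ holds if and only if $\tilde h(\F_{\{1,x\}})\subseteq\tilde h(\F_{\{1,p,2p\}})$; now $\tilde h(\F_{\{1,x\}})=\F_{h[\{1,x\}]}=\F_{\{1,h(x)\}}$ since $h(1)=1$, and $\tilde h(\F_{\{1,p,2p\}})=\F_{\{1,p,2p\}}$ by Lemma~\ref{l:12fix}, and symmetrically for the pair $\{2,x\}$, $\{2,p,2p\}$ using $h(2)=2$. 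Chaining the three equivalences gives $p\in\Pi_x\iff p\in\Pi_{h(x)}$ for every odd prime $p$, hence $\Pi_x\setminus\{2\}=\Pi_{h(x)}\setminus\{2\}$, which is equivalent to the claimed equality $\Pi_x\cup\{2\}=\Pi_{h(x)}\cup\{2\}$.

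I do not expect a serious obstacle here, since the statement is explicitly advertised as an immediate consequence of Proposition~\ref{p:iso} and Lemmas~\ref{l:2fix}, \ref{l:12fix}, \ref{ppix}; the only points demanding a little care are the trivial bookkeeping that $h$ fixes $1$ and $2$ (so that the filters $\F_{\{1,x\}}$ and $\F_{\{2,x\}}$ transform to $\F_{\{1,h(x)\}}$ and $\F_{\{2,h(x)\}}$ rather than to filters attached to some other two-element set), and the observation that $h(x)\ge 3$ whenever $x\ge 3$, which is needed to legitimately invoke Lemma~\ref{ppix} at $h(x)$. Finally, I would note for completeness that the conclusion also covers $x=1$ and $x=2$ directly, so the statement holds for every $x\in\IN$ as asserted.
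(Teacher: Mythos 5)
Your proof is correct and is exactly the argument the paper intends: the paper offers no written proof, stating only that the lemma follows from Proposition~\ref{p:iso} and Lemmas~\ref{l:2fix}, \ref{l:12fix}, \ref{ppix}, and your chain of equivalences (using $h(1)=1$, $h(2)=2$, the fixedness of $\F_{\{1,p,2p\}}$ and $\F_{\{2,p,2p\}}$ under $\tilde h$, and Lemma~\ref{ppix} at both $x$ and $h(x)$) is precisely that deduction. The only blemish is the trivial slip that for $x=1$ both sides equal $\{2\}$ rather than being empty, which does not affect the argument since $h(1)=1$ makes that case immediate anyway.
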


%\begin{proof}
%{\color{red} Fix $n\in \IN$ and consider $x=pn$. 
%Then 	$\F_{\{1,x\}}\subseteq \F_{\{1,p,2p\}}$ and $\F_{\{2,x\}}\subseteq \F_{\{2,p,2p\}}$
%	by  Lemma~\ref{ppix}.   
%Lemmas 13 and 14 imply that
%	$$\F_{\{1,h(x)\}}=\widetilde{h}(\F_{\{1,x\}})\subseteq \widetilde{h}(\F_{\{1,p,2p\}})=\F_{\{1,p,2p\}}$$
%	and 
%$$\F_{\{2,h(x)\}}=\widetilde{h}(\F_{\{2,x\}})\subseteq \widetilde{h}(\F_{\{2,p,2p\}})=\F_{\{2,p,2p\}}.$$
%Using again Lemma~\ref{ppix} we obtain that
%$p\in\Pi_{h(x)}$ what means $h(x)\in p\IN$. 
%Thus $h(p\IN)\subseteq p\IN$ for each homeomorphism $h$ of the Kirch space. So, $h^{-1}(p\IN)\subseteq \IN$, and this implies that $h (p\IN) = p\IN$ for every  homeomorphism of the Kirch space.
%}\end{proof}

For every prime number $p$ consider the set $$V_p=\{2^{n-1}p^m:n,m\in\IN\}$$ of numbers $x\in\IN$ such that $p\in\Pi_x\subseteq \{2,p\}$. Lemmas~\ref{l:2fix} and \ref{Pix} imply that $h[V_p]=V_p$ for every homeomorphism $h$ of the Kirch space.

Consider the graph $\Gamma_p=(V_p,\mathcal E_p)$ on the set $V_p$ with the set of edges $$\mathcal E_p:=\big\{E\in[V_p]^2:A_E=\{2,p\}\big\}.$$

\begin{lemma}\label{l:graph} For every prime number $p$ and every homeomorphism $h$ of the Kirch space, the restriction of $h$ to $V_p$ is an isomorphism of the graph $\Gamma_p$.
\end{lemma}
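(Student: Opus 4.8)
The plan is to reduce the combinatorial edge relation of $\Gamma_p$ to an order-theoretic property of the superconnecting poset $\mathfrak F$ which is preserved by the induced isomorphism $\tilde h$. Recall that, as observed just before the lemma, Lemmas~\ref{l:2fix} and \ref{Pix} give $h[V_p]=V_p$, so $h\restriction V_p$ is already a bijection of the vertex set $V_p$; it remains to check that it maps edges to edges and non-edges to non-edges, i.e.\ that $A_E=\{2,p\}\iff A_{h[E]}=\{2,p\}$ for every $E\in[V_p]^2$.

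The case $p=2$ is immediate: $V_2$ consists of powers of $2$, each fixed by $h$ by Lemma~\ref{l:2fix}, so $h\restriction V_2$ is the identity. So assume $p$ is odd. The crucial point is that every edge of $\Gamma_p$ realizes the \emph{same} filter. If $E=\{x,y\}\in[V_p]^2$ and $A_E=\{2,p\}$, then, since $p$ divides both $x$ and $y$, we have $p\in\Pi_E$; comparing $A_E=\{2,p\}=A_{\{p,2p\}}$, $\Pi_{\{p,2p\}}\setminus\{2\}=\{p\}\subseteq\Pi_E$, $\Pi_E\setminus\{2\}\subseteq\{p\}=\Pi_{\{p,2p\}}$ and $\alpha_E(2)=1=\alpha_{\{p,2p\}}(2)$ (using condition (iii) in the definition of $\alpha_E$), two applications of Lemma~\ref{l:wo2} yield $\F_E\subseteq\F_{\{p,2p\}}$ and $\F_{\{p,2p\}}\subseteq\F_E$, hence $\F_E=\F_{\{p,2p\}}$. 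Conversely, if $\F_E=\F_{\{p,2p\}}$ for some $E\in[V_p]^2$, then Lemma~\ref{l:wo2} (applied in both directions) forces $A_E=A_{\{p,2p\}}=\{2,p\}$. Therefore, for $E\in[V_p]^2$,
$$E\in\mathcal E_p\iff A_E=\{2,p\}\iff \F_E=\F_{\{p,2p\}}.$$

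Finally, since $\tilde h$ is an order isomorphism of $\mathfrak F$ with $\tilde h(\F_E)=\F_{h[E]}$ for finite $E$, and $\tilde h(\F_{\{p,2p\}})=\F_{\{p,2p\}}$ by Lemma~\ref{l:p2p-fix}, the displayed equivalence gives, for any $E\in[V_p]^2$, that $E\in\mathcal E_p\iff\F_{h[E]}=\tilde h(\F_E)=\tilde h(\F_{\{p,2p\}})=\F_{\{p,2p\}}\iff h[E]\in\mathcal E_p$, so $h\restriction V_p$ is an isomorphism of $\Gamma_p$. The only non-formal step is the equivalence $A_E=\{2,p\}\iff\F_E=\F_{\{p,2p\}}$ for two-element subsets of $V_p$; the bookkeeping there — chiefly distinguishing the subcases $2\in\Pi_E$ and $2\notin\Pi_E$ when verifying the hypotheses of Lemma~\ref{l:wo2} — is the mild obstacle, everything else being a direct transport along the poset isomorphism $\tilde h$ and an appeal to the already established fixed-point property of $\F_{\{p,2p\}}$.
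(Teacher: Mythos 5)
Your argument is correct, but it takes a genuinely different route from the paper's. The paper characterizes edges through the $\tilde h$-invariant family $\mathfrak F''$: for $E\in\mathcal E_p$ one has $p\in\Pi_E$ and $A_E=\{2,p\}$, so $\F_E\in\mathfrak F''$ by Lemma~\ref{efbis}; since $\tilde h[\mathfrak F'']=\mathfrak F''$ and $h[E]\subseteq h[V_p]=V_p$ forces $p\in\Pi_{h[E]}$, a second appeal to Lemma~\ref{efbis} excludes the other alternatives and yields $A_{h[E]}=\{2,p\}$, with the same argument repeated for $h^{-1}$. You instead prove, via Lemma~\ref{l:wo2} (the only $\alpha$-values that matter being $\alpha_E(2)=1=\alpha_{\{p,2p\}}(2)$ and $\alpha_E(p)=0=\alpha_{\{p,2p\}}(p)$, both forced by condition (iii)), that for odd $p$ a doubleton $E\in[V_p]^2$ satisfies $A_E=\{2,p\}$ if and only if $\F_E=\F_{\{p,2p\}}$, and then transport this single equality along $\tilde h$ using the fixed-point Lemma~\ref{l:p2p-fix}; this verification goes through exactly as you indicate, and the two subcases $2\in\Pi_E$, $2\notin\Pi_E$ are indeed harmless. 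The trade-off: your route leans on the heavier prerequisite Lemma~\ref{l:p2p-fix} (itself obtained in the paper from the structure of $\mathfrak F'$ and $\mathfrak F''$), whereas the paper's proof needs only Lemma~\ref{efbis}, the invariance of $\mathfrak F''$ and $h[V_p]=V_p$; in exchange, your argument delivers the sharper by-product that for odd $p$ all edges of $\Gamma_p$ generate one and the same filter $\F_{\{p,2p\}}$, it handles $h$ and $h^{-1}$ simultaneously through a single biconditional (non-edges go to non-edges for free), and it explicitly disposes of the case $p=2$ via Lemma~\ref{l:2fix}, a case the paper's appeal to Lemma~\ref{efbis} silently skips since condition (1) there requires an odd prime.
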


\begin{proof} Let $E\in \mathcal{E}_p$. Since $p\in \Pi_E$, we can apply Lemma~\ref{efbis} and conclude that $\F_E\in\mathfrak{F}''$.  Using fact that $\tilde{h}$ is an order isomorphism of $\mathfrak{F}$ we get $\F_{h[E]}=\tilde{h}(\F_E)\in\mathfrak{F}''$. Since $h[E]\subseteq h[V_p]=V_p$, we obtain $p\in \Pi_{h[E]}$. Using Lemma~\ref{efbis} once more, we obtain that $A_{h[E]}=\{2,p\}$, which means that $h[E]\in \mathcal{E}_p$. By analogical reasoning we can prove that $h^{-1}[E]\in \mathcal{E}_p$ for every $E\in \mathcal{E}_p$. This means that $h{\restriction}V_p$ is isomorphism of the graph $\Gamma_p$.
	\end{proof}

The structure of the graph $\Gamma_p$ depends on properties of the prime number $p$. 

A prime number $p$ is called
\begin{itemize}
\item {\em Fermat prime} if $p=2^n+1$ for some $n\in\IN$;
\item {\em Mersenne prime} if $p=2^n-1$ for some $n\in\IN$;
\item {\em Fermat--Mersenne} if $p$ is Fermat prime or Mersenne prime.
\end{itemize}
It is known (and easy to see) that for any Fermat prime number $p=2^n+1$ the exponent $n$ is a power of $2$, and for any Mersenne prime number $p=2^n-1$ the power $n$ is a prime number. It is not known whether there are infinitely many Fermat--Mersenne prime numbers. All known Fermat prime numbers are the numbers $2^{2^n}+1$ for $0\le n\le 4$ (see {\tt oeis.org/A019434} in \cite{OEIS}). At the moment only 51 Mersenne prime numbers are known, see the sequence {\tt oeis.org/A000043} in \cite{OEIS}.

\begin{lemma}\label{struct} Let $p$ be an odd prime number.
\begin{enumerate}
\item If $p=3$, then the set $\mathcal E_p$ of edges of the graph $\Gamma_p$ coincides with the set of doubletons\\
$\{2^{a-1}3^b,2^{a-1}3^{b+1}\}$, $\{2^{a-1}3^b,2^{a-1}3^{b+2}\}$, $\{2^{a-1}3^b,2^{a}3^{b}\}$, $\{2^{a-1}3^b,2^{a+1}3^{b}\}$, $\{2^{a-1}3^{b+1},2^{a+1}3^b\}$,\\
$\{2^{a+1}3^{b},2^{a}3^{b+1}\}$, $\{2^{a+3}3^b,2^a3^{b+2}\}$
for some $a,b\in\IN$.
\item If $p=2^m+1>3$ is Fermat prime, then\newline $\mathcal E_p=\big\{\{2^{a-1}p^b,2^{a-1}p^{b+1}\},\{2^{a-1}p^b,2^{a}p^b\},\{2^{m+a-1}p^{b},2^{a-1}p^{b+1}\} :a,b\in\IN\big\}$.
\item 
If $p=2^m-1>3$ is Mersenne prime, then\newline $\mathcal E_p=\big\{\{2^{a-1}p^b,2^{a}p^b\},\{2^{a-1}p^b,2^{m+a-1}p^b\}, \{2^{a-1}p^{b+1},2^{m+a-1}p^b\}:a,b\in\IN\big\}$.
\item If $p$ is not Fermat--Mersenne, then $\mathcal E_p=\big\{\{2^ap^b,2^{a-1}p^b\}:a,b\in\IN\big\}$.
\end{enumerate} 
\end{lemma}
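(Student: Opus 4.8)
The plan is to characterize the edge set $\mathcal E_p$ purely arithmetically by computing, for each pair $E=\{u,v\}\subseteq V_p$, the set $A_E=\{2\}\cup\Pi_u\cup\Pi_v\cup\Pi_{u-v}$ given by Lemma~\ref{2ae}, and then imposing the condition $A_E=\{2,p\}$. Since every element of $V_p$ has the form $2^{a-1}p^b$ with $a,b\in\IN$, both $\Pi_u$ and $\Pi_v$ are already contained in $\{2,p\}$, so the condition $A_E=\{2,p\}$ reduces to the single requirement that $\Pi_{u-v}\subseteq\{2,p\}$, i.e. that the difference of the two numbers has no prime factor outside $\{2,p\}$; and additionally, if $p\notin\Pi_u\cup\Pi_v$ one needs $p\in\Pi_{u-v}$, but in $V_p$ we always have $p\mid u$ and $p\mid v$ unless one of the exponents $b$ is forced to be at least $1$ — actually by definition of $V_p$ every element is divisible by $p$, so $p\in\Pi_E$ automatically and the only real constraint is $\Pi_{u-v}\subseteq\{2,p\}$. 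So the whole lemma becomes the number-theoretic problem: for which $a,a',b,b'\in\IN$ does $2^{a-1}p^b - 2^{a'-1}p^{b'}$ have all its prime factors in $\{2,p\}$?

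Second, I would normalize: writing the difference as $\pm 2^{\min(a,a')-1}p^{\min(b,b')}\bigl(2^{|a-a'|}p^{\max(b,b')-\min(b,b')} - 1\bigr)$ when, say, the larger of the two has the larger $2$-exponent (with appropriate sign bookkeeping when the monomial orderings of the two exponents disagree), the factor in parentheses must itself be a product of powers of $2$ and $p$ only. Splitting into cases according to whether $a=a'$, $b=b'$, or the exponents strictly increase/decrease together, one is reduced to analyzing when expressions of the shape $2^s p^t - 1$, $2^s - p^t$, $p^t - 2^s$, $2^s p^t - p^r$, $2^s - 1$, $p^t - 1$ are $\{2,p\}$-smooth. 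The elementary subcases ($2^s-1$ being $\{2,p\}$-smooth forces $p=2^s-1$ Mersenne; $p^t-1$ being $\{2,p\}$-smooth forces $t=1$ and $p-1$ a power of $2$, i.e. $p$ Fermat, etc.) handle most configurations, and the genuinely hard residual cases — where a difference like $2^s p^t - 1$ with $s,t\ge 1$ or $2^s-p^t$ with $s,t\ge 2$ would have to be a power of $2$ times a power of $p$ — are exactly where the deep inputs come in.

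The main obstacle is ruling out spurious solutions to Catalan/Pillai-type equations, and this is where Theorems~\ref{Mihailescu} and \ref{Zsigmondy} do the work. Mih\u ailescu's theorem handles $|2^s-p^t|=1$-type collapses (only $\{2^3,3^2\}$), which is precisely why $p=3$ gets extra edges in item (1) — the coincidence $9-8=1$ produces, e.g., the doubleton $\{2^{a-1}3^b,2^{a+3}3^{b}\}$-type relations and the $\{2^{a+3}3^b,2^a3^{b+2}\}$ edge (since $8\cdot 8 - 9\cdot 1\cdot\ldots$, i.e. $2^6-3^2=55$? — one must recompute carefully, the actual coincidence used is $2^3-3^2=-1$ giving differences that are $\{2,3\}$-smooth only for $p=3$). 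Zsigmondy's theorem is what guarantees that for $s\ge 2$ the number $p^s-1$ (and $2^s-1$) acquires a primitive prime divisor outside $\{2,p\}$ except in the two listed exceptional cases $s=2,p=2^k-1$ and $s=6,p=2$, which is exactly why the exponent $b$ of $p$ in any edge can differ by at most $1$ (items (2)--(4)) while for $p=3$ the exception $2^6-1=63=3^2\cdot 7$ — wait, $7\notin\{2,3\}$, so that is not an exception here — rather the relevant Zsigmondy exceptions for base $p=3$ are handled directly, and the $b\mapsto b+2$ edges in item (1) come solely from Mih\u ailescu. So the structure of the argument is: reduce to $\{2,p\}$-smoothness of a monomial difference; peel off the trivial factor; in each exponent-configuration case invoke either a one-line elementary divisibility argument, or Mih\u ailescu (for differences $\pm1$), or Zsigmondy (to bound $p$-exponent gaps); and finally assemble the surviving $(a,b,a',b')$ tuples into the explicit lists. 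The bookkeeping — getting every edge in the stated lists and no others, with the correct ranges of $a,b$ — will be the tedious part, but it is mechanical once the three smoothness subcases are settled.
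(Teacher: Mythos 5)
Your plan coincides with the paper's proof: by Lemma~\ref{2ae}, a doubleton $\{x,y\}\subseteq V_p$ is an edge of $\Gamma_p$ iff $\Pi_{x-y}\subseteq\{2,p\}$, and after factoring out the common power of $2$ and $p$ one case-splits on the comparison of the two exponent pairs and resolves each resulting Catalan-type equation --- which the paper does using Mih\u ailescu's Theorem~\ref{Mihailescu} alone, so your appeal to Zsigmondy is superfluous (it is needed only later, in Lemma~\ref{l:F1x}), though not harmful. The ``mechanical bookkeeping'' you defer is exactly the case analysis the paper writes out, and it does close as you expect; in particular your hesitation about the seventh family for $p=3$ resolves as $2^{a+3}3^b-2^a3^{b+2}=2^a3^b(2^3-3^2)=-2^a3^b$, so the Mih\u ailescu pair $\{2^3,3^2\}$ is indeed the sole source of the extra edges in item (1).
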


\begin{proof} 	Proof of Lemma~\ref{struct} in each of cases (1)--(4) will be similar. Edges of the graph $\Gamma_p$ are  $2$-element subsets of the set $V_p$ such that $A_E=\{2,p\}$. Since vertices of the graph $\Gamma_p$ are numbers of the form $2^{n-1}p^m$, where $n,m\in\IN$, we can apply Lemma~\ref{2ae} and conclude that a doubleton $\{x,y\}\subset V_p$ belongs to $\mathcal E_p$ if and only if $\{2,p\}=\{2\}\cup \Pi_x\cup\Pi_y\cup\Pi_{x-y}$. In subsequent proofs, we will intensively use the Mih\u ailescu Theorem~\ref{Mihailescu} saying that $2^3,3^2$ is a unique pair of consecutive powers. 
\smallskip	
	
1. First we consider the case of $p=3$. It is easy to see that the doubletons $\{x,y\}$ written in the statement (1) have $\Pi_x\cup\Pi_y\cup\Pi_{x-y}\subseteq\{2,3\}$, which implies that $\{x,y\}\in\mathcal E_3$. It remains to show that every doubleton $\{x,y\}\in\mathcal E_3$ is of the form indicated in the statement (1). Write $\{x,y\}$ as $\{2^{a-1}3^b,2^{c-1}3^d\}$ for some $a,b,c,d\in\IN$ such that $2^{a-1}3^b<2^{c-1}3^d$. 

If $a=c$, then $b<d$ and the inclusion $\Pi_{x-y}\subseteq\{2,3\}$ implies that $\Pi_{3^{d-b}-1}\subseteq\{2,3\}$ and hence $3^{d-b}-1$ is a power of $2$. By the Mih\u ailescu Theorem~\ref{Mihailescu}, $d-b\in\{1,2\}$, which means that $\{x,y\}$ is equal to $\{2^{a-1}3^b,2^{a-1}3^{b+1}\}$ or $\{2^{a-1}3^b,2^{a-1}3^{b+2}\}$.

If $b=d$, then $a<c$ and the inclusion  $\Pi_{x-y}\subseteq\{2,3\}$ implies that $\Pi_{2^{c-a}-1}\subseteq\{2,3\}$ and hence $2^{c-a}-1$ is a power of $3$. By the Mih\u ailescu Theorem~\ref{Mihailescu}, $c-a\in\{1,2\}$, which means that $\{x,y\}$ is equal to $\{2^{a-1}3^b,2^{a}3^{b}\}$ or $\{2^{a-1}3^b,2^{a+1}3^{b}\}$.

So, we assume that $a\ne c$ and $b\ne d$. In this case we should consider four subcases.

If $a<c$ and $b<d$, then $\Pi_{x-y}\subseteq\{2,3\}$ implies that each prime divisor of $2^{c-a}3^{d-b}-1$ is equal to $2$ or $3$, which is not possible.

If  $a<c$ and $b>d$, then $\Pi_{x-y}\subseteq\{2,3\}$ and $2^{a-1}3^b<2^{c-1}3^d$ imply that $2^{c-a}-3^{b-d}=1$ and hence $c-a=2$ and $b-d=1$ by the Mih\u ailescu Theorem~\ref{Mihailescu}.  In this case $\{x,y\}=\{2^{a-1}3^{d+1},2^{a+1}3^d\}$.

If $a>c$ and $b<d$, then  $\Pi_{x-y}\subseteq\{2,3\}$ and $2^{a-1}3^b<2^{c-1}3^d$ imply that $3^{d-b}-2^{a-c}=1$ and hence $(d-b,a-c)\in\{(1,1),(2,3)\}$ by the Mih\u ailescu Theorem~\ref{Mihailescu}.  In this case $\{x,y\}$ is equal to $\{2^{c+1}3^{b},2^{c}3^{b+1}\}$ or $\{2^{c+3}3^b,2^c3^{b+2}\}$.

The subcase $a>c$ and $b>d$ is forbidden by the inequality $2^{a-1}3^b<2^{c-1}3^d$.
\smallskip

2. Assume that  $p=2^m+1>3$ is a Fermat prime. In this case $m>1$.	It is easy to check that every doubleton $\{x,y\}\in\big\{\{2^{a-1}p^b,2^{a-1}p^{b+1}\},\{2^{a-1}p^b,2^{a}p^b\},\{2^{m+a-1}p^{b},2^{a-1}p^{b+1}\} :a,b\in\IN\big\}$ has $A_{\{x,y\}}=\{2\}\cup \Pi_x\cup\Pi_y\cup\Pi_{x-y}=\{2,p\}$ and hence $\{x,y\}\in\mathcal E_p$. 

Now assume that $\{x,y\}\in\mathcal E_p$ is an edge of the graph $\Gamma_p$. Then $\{2\}\cup\Pi_x\cup\Pi_y\cup\Pi_{x-y}=A_{\{x,y\}}=\{2,p\}$ and $\{x,y\}$ can be written as $\{2^{a-1}p^b,2^{c-1}p^d\}$ for some $a,b,c,d\in\IN$ with $2^{a-1}p^b<2^{c-1}p^d$.

If $a=c$, then $b<d$ and the inclusion $\Pi_{x-y}\subseteq\{2,p\}$ implies that $\Pi_{p^{d-b}-1}\subseteq\{2,p\}$ and hence $p^{d-b}-1$ is a power of $2$. By the Mih\u ailescu Theorem~\ref{Mihailescu}, $d-b=1$, which means that $\{x,y\}$ is equal to $\{2^{a-1}p^b,2^{a-1}p^{b+1}\}$.

If $b=d$, then $a<c$ and the inclusion  $\Pi_{x-y}\subseteq\{2,p\}$ implies that $\Pi_{2^{c-a}-1}\subseteq\{2,p\}$ and hence $2^{c-a}-1$ is a power of $p$. By the Mih\u ailescu Theorem~\ref{Mihailescu}, $2^{c-a}-1\in\{1,p\}=\{1,2^m+1\}$ and hence $c-a=1$, which means that $\{x,y\}$ is equal to $\{2^{a-1}p^b,2^{a}p^{b}\}$.

So, we assume that $a\ne c$ and $b\ne d$. By analogy with the case of $p=3$, we can show that the subcases ($a<c$ and $b<d$) and ($a>c$ and $b>d$) are impossible.

If $a<c$ and $b>d$, then $\Pi_{x-y}\subseteq\{2,p\}$ implies that $2^{c-a}-p^{b-d}=1$. In this case the Mih\u ailescu Theorem~\ref{Mihailescu} ensures that $b-d=1$ and hence $2^{c-a}=p+1=2^m+2$ which is not possible (as $m>1$).

If $a>c$ and $b<d$, then $\Pi_{x-y}\subseteq\{2,p\}$ implies that $p^{d-a}-2^{a-c}=1$. In this case the Mih\u ailescu Theorem~\ref{Mihailescu} implies that $d-b=1$ and hence $2^{a-c}=p-1=2^m$ and $a-c=m$. In this case $\{x,y\}=\{2^{c+m}2^b,2^cp^{b+1}\}$.
\smallskip

3. Assume that  $p=2^m-1>3$ is a Mersenne prime. In this case $m>2$.	It is easy to check that every doubleton $\{x,y\}\in\big\{\{2^ap^b,2^{a-1}p^b\},\{2^{a-1}p^b,2^{m+a-1}p^b\}, \{2^{a-1}p^{b+1},2^{m+a-1}p^b\} :a,b\in\IN\big\}$ has $A_{\{x,y\}}=\{2\}\cup \Pi_x\cup\Pi_y\cup\Pi_{x-y}=\{2,p\}$ and hence $\{x,y\}\in\mathcal E_p$. 

Now assume that $\{x,y\}\in\mathcal E_p$ is an edge of the graph $\Gamma_p$. Then $\{2\}\cup\Pi_x\cup\Pi_y\cup\Pi_{x-y}=A_{\{x,y\}}=\{2,p\}$ and $\{x,y\}$ can be written as $\{2^{a-1}p^b,2^{c-1}p^d\}$ for some $a,b,c,d\in\IN$ with $2^{a-1}p^b<2^{c-1}p^d$.

If $a=c$, then $b<d$ and the inclusion $\Pi_{x-y}\subseteq\{2,p\}$ implies that $\Pi_{p^{d-b}-1}\subseteq\{2,p\}$ and hence $p^{d-b}-1$ is a power of $2$. By the Mih\u ailescu Theorem~\ref{Mihailescu}, $d-b=1$ and hence $2^m-2=p-1$ is a power of $2$, which is not true as $m>2$.

If $b=d$, then $a<c$ and the inclusion  $\Pi_{x-y}\subseteq\{2,p\}$ implies that $\Pi_{2^{c-a}-1}\subseteq\{2,p\}$ and hence $2^{c-a}-1$ is a power of $p$. By the Mih\u ailescu Theorem~\ref{Mihailescu}, $2^{c-a}-1\in\{1,p\}=\{1,2^m-1\}$ and hence $c-a\in\{1,m\}$, which means that $\{x,y\}$ is equal to $\{2^{a-1}p^b,2^{a}p^{b}\}$ or $\{2^{a-1}p^b,2^{m+a-1}p^b\}$.

So, we assume that $a\ne c$ and $b\ne d$. By analogy with the case of $p=3$, we can show that the subcases ($a<c$ and $b<d$) and ($a>c$ and $b>d$) are impossible. 	

If $a<c$ and $b>d$, then $\Pi_{x-y}\subseteq\{2,p\}$ implies that $2^{c-a}-p^{b-d}=1$. In this case the Mih\u ailescu Theorem~\ref{Mihailescu} ensures that $b-d=1$ and hence $2^{c-a}=p+1=2^m$ and $c-a=m$. In this case $\{x,y\}=\{2^{a-1}p^{d+1},2^{m+a-1}p^d\}$.

If $a>c$ and $b<d$, then $\Pi_{x-y}\subseteq\{2,p\}$ implies that $p^{d-a}-2^{a-c}=1$. In this case the Mih\u ailescu Theorem~\ref{Mihailescu} implies that $d-b=1$ and hence $2^{a-c}=p-1=2^m-2$, which is not possible as $m>2$. 
\smallskip

4. Assume that $p$ is not Fermat-Mersennne. It is easy to check that every doubleton $\{x,y\}\in\big\{\{2^{a-1}p^b,2^{a-1}p^{b+1}\}:a,b\in\IN\big\}$ has $A_{\{x,y\}}=\{2\}\cup \Pi_x\cup\Pi_y\cup\Pi_{x-y}=\{2,p\}$ and hence $\{x,y\}\in\mathcal E_p$. 

Now assume that $\{x,y\}\in\mathcal E_p$ is an edge of the graph $\Gamma_p$. Then $\{2\}\cup\Pi_x\cup\Pi_y\cup\Pi_{x-y}=A_{\{x,y\}}=\{2,p\}$ and $\{x,y\}$ can be written as $\{2^{a-1}p^b,2^{c-1}p^d\}$ for some $a,b,c,d\in\IN$ with $2^{a-1}p^b<2^{c-1}p^d$.

If $a=c$, then $b<d$ and the inclusion $\Pi_{x-y}\subseteq\{2,p\}$ implies that $\Pi_{p^{d-b}-1}\subseteq\{2,p\}$ and hence $p^{d-b}-1$ is a power of $2$. By the Mih\u ailescu Theorem~\ref{Mihailescu}, $d-b=1$ and hence $p$ is a Fermat prime, which is not true.

If $b=d$, then $a<c$ and the inclusion  $\Pi_{x-y}\subseteq\{2,p\}$ implies that $\Pi_{2^{c-a}-1}\subseteq\{2,p\}$ and hence $2^{c-a}-1$ is a power of $p$. By the Mih\u ailescu Theorem~\ref{Mihailescu}, $2^{c-a}-1\in\{1,p\}$. Taking into account that $p$ is not Mersenne prime,  we conclude that $2^{c-a}-1=1$ and hence $c-a=1$. Then $\{x,y\}=\{2^{a-1}p^b,2^{a}p^{b}\}$.

So, we assume that $a\ne c$ and $b\ne d$. By analogy with the case of $p=3$, we can show that the subcases ($a<c$ and $b<d$) and ($a>c$ and $b>d$) are impossible. 	

If $a<c$ and $b>d$, then $\Pi_{x-y}\subseteq\{2,p\}$ implies that $2^{c-a}-p^{b-d}=1$. In this case the Mih\u ailescu Theorem~\ref{Mihailescu} ensures that $b-d=1$ and hence $p=2^{c-a}-1$ is a Mersenne prime, which is not true.

If $a>c$ and $b<d$, then $\Pi_{x-y}\subseteq\{2,p\}$ implies that $p^{d-a}-2^{a-c}=1$. In this case the Mih\u ailescu Theorem~\ref{Mihailescu} implies that $d-b=1$ and hence $p=1+2^{a-c}$ is a Fermat prime, which is not true.	
\end{proof}

In the following diagrams we draw the graphs $\Gamma_p$ for $p$ equal to $3,5,7,11$. Observe that $3$ is both Fermat and Mersenne prime, $5$ is Fermat prime, $7$ is Mersenne prime and $11$ is not Fermat--Mersenne.

$$\xymatrix{
3\ar@{-}[d]\ar@{-}@/^15pt/[dd]\ar@{-}@/^15pt/[rr]\ar@{-}[r]&3^2\ar@{-}[ddl]\ar@{-}[r]\ar@{-}[d]\ar@{-}@/^15pt/[rr]\ar@{-}[dddl]\ar@{-}@/^15pt/[dd]&3^3\ar@{-}[ddl]\ar@{-}@/^15pt/[dd]\ar@{-}[r]\ar@{-}[d]\ar@{-}@/^15pt/[rr]\ar@{-}[dddl]&3^4\ar@{-}[ddl]\ar@{-}@/^15pt/[dd]\ar@{-}[r]\ar@{-}[d]\ar@{-}[dddl]&\dots\\
2{\cdot}3\ar@{-}@/^15pt/[dd]\ar@{-}[d]\ar@{-}@/^15pt/[rr]\ar@{-}[r]&2{\cdot}3^2\ar@{-}[ddl]\ar@{-}@/^15pt/[dd]\ar@{-}[r]\ar@{-}[d]\ar@{-}@/^15pt/[rr]\ar@{-}[dddl]&2{\cdot}3^3\ar@{-}[ddl]\ar@{-}@/^15pt/[dd]\ar@{-}[r]\ar@{-}[d]\ar@{-}@/^15pt/[rr]\ar@{-}[dddl]&2{\cdot}3^4\ar@{-}[ddl]\ar@{-}@/^15pt/[dd]\ar@{-}[r]\ar@{-}[d]\ar@{-}[dddl]&\dots\\
2^2{\cdot}3\ar@{-}@/^15pt/[dd]\ar@{-}[d]\ar@{-}@/^15pt/[rr]\ar@{-}[r]&2^2{\cdot}3^2\ar@{-}[ddl]\ar@{-}@/^15pt/[dd]\ar@{-}[r]\ar@{-}[d]\ar@{-}@/^15pt/[rr]&2^2{\cdot}3^3\ar@{-}[ddl]\ar@{-}@/^15pt/[dd]\ar@{-}[r]\ar@{-}[d]\ar@{-}@/^15pt/[rr]&2^2{\cdot}3^4\ar@{-}[ddl]\ar@{-}@/^15pt/[dd]\ar@{-}[r]\ar@{-}[d]&\dots\\
2^3{\cdot}3\ar@{-}[d]\ar@{-}@/^15pt/[rr]\ar@{-}[r]&2^3{\cdot}3^2\ar@{-}[r]\ar@{-}[d]\ar@{-}@/^15pt/[rr]&2^3{\cdot}3^3\ar@{-}[r]\ar@{-}[d]\ar@{-}@/^15pt/[rr]&2^3{\cdot}3^4\ar@{-}[r]\ar@{-}[d]&\dots\\
\vdots&\vdots&\vdots&\vdots\\
}\hskip30pt
\xymatrix{
5\ar@{-}[r]\ar@{-}[d]&5^2\ar@{-}[r]\ar@{-}[d]\ar@{-}[ldd]&5^3\ar@{-}[r]\ar@{-}[d]\ar@{-}[ldd]&5^4\ar@{-}[r]\ar@{-}[d]\ar@{-}[ldd]&\dots\\
2{\cdot}5\ar@{-}[r]\ar@{-}[d]&2{\cdot}5^2\ar@{-}[r]\ar@{-}[d]\ar@{-}[ldd]&2{\cdot}5^3\ar@{-}[r]\ar@{-}[d]\ar@{-}[ldd]&2{\cdot}5^4\ar@{-}[r]\ar@{-}[d]\ar@{-}[ldd]&\dots\\
2^2{\cdot}5\ar@{-}[r]\ar@{-}[d]&2^2{\cdot}5^2\ar@{-}[r]\ar@{-}[d]\ar@{-}[ldd]&2^2{\cdot}5^3\ar@{-}[r]\ar@{-}[d]\ar@{-}[ldd]&2^2{\cdot}5^4\ar@{-}[r]\ar@{-}[d]\ar@{-}[ldd]&\dots\\
2^3{\cdot}5\ar@{-}[r]\ar@{-}[d]&2^3{\cdot}5^2\ar@{-}[r]\ar@{-}[d]&2^3{\cdot}5^3\ar@{-}[r]\ar@{-}[d]&2^3{\cdot}5^4\ar@{-}[r]\ar@{-}[d]&\dots\\
\vdots&\vdots&\vdots&\vdots\\
&&&&
}
$$

$$
\xymatrix{
7\ar@{-}[d]\ar@{-}@/_15pt/[ddd]&7^2\ar@{-}[d]\ar@{-}@/_15pt/[ddd]\ar@{-}[dddl]&7^3\ar@{-}[d]\ar@{-}@/_15pt/[ddd]\ar@{-}[dddl]&7^4\ar@{-}[d]\ar@{-}@/_15pt/[ddd]\ar@{-}[dddl]&\dots\ar@{-}[dddl]\\
2{\cdot}7\ar@{-}[d]\ar@{-}@/_15pt/[ddd]&2{\cdot}7^2\ar@{-}[d]\ar@{-}@/_15pt/[ddd]\ar@{-}[dddl]&2{\cdot}7^3\ar@{-}[d]\ar@{-}@/_15pt/[ddd]\ar@{-}[dddl]&2{\cdot}7^4\ar@{-}[d]\ar@{-}@/_15pt/[ddd]\ar@{-}[dddl]&\dots\ar@{-}[dddl]\\
2^2{\cdot}7\ar@{-}[d]\ar@{-}@/_15pt/[ddd]&2^2{\cdot}7^2\ar@{-}[d]\ar@{-}@/_15pt/[ddd]\ar@{-}[dddl]&2^2{\cdot}7^3\ar@{-}[d]\ar@{-}@/_15pt/[ddd]\ar@{-}[dddl]&2^2{\cdot}7^4\ar@{-}[d]\ar@{-}@/_15pt/[ddd]\ar@{-}[dddl]&\dots\ar@{-}[dddl]\\
2^3{\cdot}7\ar@{-}[d]&2^3{\cdot}7^2\ar@{-}[d]&2^3{\cdot}7^3\ar@{-}[d]&2^3{\cdot}7^4\ar@{-}[d]&\dots\\
2^4{\cdot}7\ar@{-}[d]&2^4{\cdot}7^2\ar@{-}[d]&2^4{\cdot}7^3\ar@{-}[d]&2^4{\cdot}7^4\ar@{-}[d]&\dots\\
\vdots&\vdots&\vdots&\vdots\\
}\hskip30pt
\xymatrix{
11\ar@{-}[d]&11^2\ar@{-}[d]&11^3\ar@{-}[d]&11^4\ar@{-}[d]&\cdots\\
2{\cdot}11\ar@{-}[d]&2{\cdot}11^2\ar@{-}[d]&2{\cdot}11^3\ar@{-}[d]&2{\cdot}11^4\ar@{-}[d]&\cdots\\
2^2{\cdot}11\ar@{-}[d]&2^2{\cdot}11^2\ar@{-}[d]&2^2{\cdot}11^3\ar@{-}[d]&2^2{\cdot}11^4\ar@{-}[d]&\cdots\\
2^3{\cdot}11\ar@{-}[d]&2^3{\cdot}11^2\ar@{-}[d]&2^3{\cdot}11^3\ar@{-}[d]&2^3{\cdot}11^4\ar@{-}[d]&\cdots\\
2^4{\cdot}11\ar@{-}[d]&2^4{\cdot}11^2\ar@{-}[d]&2^4{\cdot}11^3\ar@{-}[d]&2^4{\cdot}11^4\ar@{-}[d]&\cdots\\
\vdots&\vdots&\vdots&\vdots
}
$$

\begin{lemma}\label{l:graph} Let $p$ be an odd prime number and $h$ be a homeomorphism of the Kirch space.
\begin{enumerate}
\item If $p$ is Fermat-Mersenne, then $h(p)=p$;
\item If $p$ is not Fermai-Mersenne, then $h[p^\IN]=p^\IN$.
\end{enumerate}
\end{lemma}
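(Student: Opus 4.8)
The plan is to extract from the edge description in Lemma~\ref{struct} a purely graph-theoretic property that pins down the vertex $p$ (resp.\ the subset $p^\IN$) inside the graph $\Gamma_p$, and then to invoke the lemma which asserts that $h{\restriction}V_p$ is an automorphism of $\Gamma_p$. Since every graph automorphism preserves vertex degrees, it will suffice to single out $p$ (or $p^\IN$) by a degree condition, after which $h{\restriction}V_p$ must respect it, forcing $h(p)=p$ (or $h[p^\IN]=p^\IN$).

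First I would settle statement (2). If $p$ is not Fermat--Mersenne, then by Lemma~\ref{struct}(4) the graph $\Gamma_p$ is the disjoint union, over $b\in\IN$, of the one-sided infinite paths
$$2^0p^b-2^1p^b-2^2p^b-2^3p^b-\cdots,$$
whose only vertices of degree $1$ are the endpoints $2^0p^b=p^b$. Hence the automorphism $h{\restriction}V_p$ permutes the set of all degree-$1$ vertices of $\Gamma_p$, which is precisely $p^\IN$, so $h[p^\IN]=p^\IN$.

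For statement (1) I would split into the three sub-cases of Lemma~\ref{struct}. If $p=2^m+1>3$ is a Fermat prime (so $m\ge 2$), then by Lemma~\ref{struct}(2) the vertex $p=2^0p^1$ has exactly the two neighbours $2p$ and $p^2$, whereas every other vertex $2^ip^j$ (one with $i\ge1$ or $j\ge2$) has degree at least $3$: if $j\ge 2$ it is joined to $2^ip^{j-1}$, $2^ip^{j+1}$ and $2^{i+1}p^j$, and if $i\ge 1$ it is joined to $2^{i-1}p^j$, $2^{i+1}p^j$ and $2^ip^{j+1}$. If $p=2^m-1>3$ is a Mersenne prime (so $m\ge 3$), Lemma~\ref{struct}(3) is handled symmetrically: now $p$ has exactly the two neighbours $2p$ and $2^mp$, and the analogous inspection of boundary and interior vertices again gives every other vertex degree at least $3$. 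In the remaining case $p=3$, Lemma~\ref{struct}(1) lists seven families of edges; one computes that $3=2^03^1$ has exactly the four neighbours $6$, $9$, $12$, $27$, while any vertex $2^i3^j$ with $i\ge 1$ is joined to $2^{i-1}3^j$, $2^{i+1}3^j$, $2^{i+2}3^j$, $2^i3^{j+1}$ and $2^i3^{j+2}$, and any vertex $2^03^j$ with $j\ge 2$ is joined to $2^03^{j-1}$, $2^03^{j+1}$, $2^03^{j+2}$, $2{\cdot}3^j$ and $2^2{\cdot}3^j$; hence every vertex of $\Gamma_3$ other than $3$ has degree at least $5$. In all three sub-cases $p$ is thus the unique vertex of $\Gamma_p$ of smallest degree, so $h{\restriction}V_p$ fixes it and $h(p)=p$.

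The point I expect to be most delicate is the case $p=3$, where $\Gamma_3$ is comparatively dense near the corner: one must check carefully that \emph{no} vertex other than $3$ itself has degree $4$, i.e.\ that the ``backward'' edges (towards smaller powers of $2$ or of $3$) together with the three ``diagonal'' edge families in the list of Lemma~\ref{struct}(1) account exactly for the incidences that the corner vertex $2^03^1$ avoids, while every other vertex of $\Gamma_3$ carries at least one additional such edge. The Fermat and Mersenne sub-cases, and statement (2), reduce to a short and routine inspection of the edge lists supplied by Lemma~\ref{struct}.
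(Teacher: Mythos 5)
Your proposal is correct and follows essentially the same route as the paper: both arguments identify $p$ (resp.\ the set $p^\IN$) as the unique vertex of minimal degree ($4$ for $p=3$, $2$ in the Fermat and Mersenne cases, resp.\ the set of degree-one vertices in the non-Fermat--Mersenne case) of the graph $\Gamma_p$ described in Lemma~\ref{struct}, and then apply the fact that $h{\restriction}V_p$ is an automorphism of $\Gamma_p$. Your explicit neighbour counts (e.g.\ the neighbours $6,9,12,27$ of $3$ and the degree-$\ge 5$ bound for the other vertices of $\Gamma_3$) are accurate and merely spell out what the paper asserts without detail.
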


\begin{proof} Given an odd prime number $p$, consider the graph $\Gamma_p=(V_p,\mathcal E_p)$. 

First we consider the case $p=3$. In this case Lemma~\ref{struct}(1) ensures that the degree of the vertex $3$ in the graph $\Gamma_3$ is equal to $4$ but the other vertices have  degree at least $5$. Hence $h(3)=3$. 	

Next, we assume that  $p>3$ is Fermat--Mersenne prime. In this case Lemma~\ref{struct}(2,3) implies that the degree of the vertex $p$ in the graph $\Gamma_p$ is $2$ but the other vertices have  degree at least $3$. Hence $h(p)=p$.

Finally, assume that $p$ is not Fermat-Mersenne. Then Lemma~\ref{struct}(4) ensures that the set $p^\IN$ coincides with the set of vertices of order 1 in the graph $\Gamma_p$. Taking into account that $h{\restriction}V_p$ is an isomorphism of the graph $\Gamma_p$, we conclude that $h[p^\IN]=p^\IN$.
\end{proof}

To prove that $h(p)=p$ for any prime number $p$, we  will need the following lemma.

\begin{lemma}\label{l:F1x} For any integer number $x\in\IN\setminus\{1\}$ the filter $\F_{\{1,x\}}$ is the greatest element of the subset $$\mathfrak F_x=\{\F_{\{1,x^n\}}:n\in\IN\}$$ of the poset $\mathfrak F$. If $x\notin \{2m:m\in\IN\}\cup\{2^m-1:m\in\IN\}$, then $\{n\in\IN:\F_{\{1,x^n\}}=\F_{\{1,x\}}\}=\{1\}$.
\end{lemma}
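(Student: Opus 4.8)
The plan is to reduce everything to the arithmetic description of the order on $\mathfrak F$ provided by Lemma~\ref{l:wo2}, and then to invoke Zsigmondy's Theorem~\ref{Zsigmondy}. The computational input is the following: for every $n\in\IN$ the set $\{1,x^n\}$ has two elements (as $x\ge 2$), and by Lemma~\ref{2ae}, using $\Pi_1=\emptyset$ and $\Pi_{x^n}=\Pi_x$, we get $A_{\{1,x^n\}}=\{2\}\cup\Pi_x\cup\Pi_{x^n-1}$ and $\Pi_{\{1,x^n\}}=\emptyset$. Moreover $\alpha_{\{1,x^n\}}(p)=1$ for every $p\in A_{\{1,x^n\}}$: for $p=2$ this is condition (iii) in the definition of $\alpha_E$, while for an odd $p\in A_{\{1,x^n\}}$ the inclusion $1\in\{0,\alpha_{\{1,x^n\}}(p)\}+p\IZ$ together with $p\nmid 1$ and $0\le\alpha_{\{1,x^n\}}(p)<p$ forces $\alpha_{\{1,x^n\}}(p)=1$. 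The same computations apply with $x$ in place of $x^n$.

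First I would establish that $\F_{\{1,x\}}$ is the greatest element of $\mathfrak F_x$. Since $(x-1)\mid(x^n-1)$ we have $\Pi_{x-1}\subseteq\Pi_{x^n-1}$, hence $A_{\{1,x\}}\subseteq A_{\{1,x^n\}}$; the inclusion $\Pi_{\{1,x\}}\setminus\{2\}\subseteq\Pi_{\{1,x^n\}}$ is vacuous, and $\alpha_{\{1,x^n\}}$ and $\alpha_{\{1,x\}}$ agree on $A_{\{1,x\}}$ because both are identically $1$ there. By Lemma~\ref{l:wo2}, $\F_{\{1,x^n\}}\subseteq\F_{\{1,x\}}$ for every $n\in\IN$, and since $\F_{\{1,x\}}=\F_{\{1,x^1\}}\in\mathfrak F_x$, it is the greatest element of $\mathfrak F_x$.

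For the second assertion, assume $x\notin\{2m:m\in\IN\}\cup\{2^m-1:m\in\IN\}$; then $x$ is odd, $x\ge 3$, and $x\ne 2^m-1$ for all $m$. By the previous step the equality $\F_{\{1,x^n\}}=\F_{\{1,x\}}$ is equivalent to $\F_{\{1,x\}}\subseteq\F_{\{1,x^n\}}$, so it suffices to show that the latter fails for every $n\ge 2$. If it held, Lemma~\ref{l:wo2} would give $A_{\{1,x^n\}}\subseteq A_{\{1,x\}}$, that is $\Pi_{x^n-1}\subseteq\{2\}\cup\Pi_x\cup\Pi_{x-1}$. But $\gcd(x^n-1,x)=1$ (a common prime divisor would divide $1$), so $\Pi_{x^n-1}\cap\Pi_x=\emptyset$, and $2\in\Pi_{x-1}$ since $x$ is odd; therefore the inclusion reduces to $\Pi_{x^n-1}\subseteq\Pi_{x-1}\subseteq\bigcup_{0<k<n}\Pi_{x^k-1}$.

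The main obstacle — in fact the only non-routine point — is to recognize this last inclusion as precisely the hypothesis of Zsigmondy's Theorem~\ref{Zsigmondy} for $a=x$ (with $x\ge 2$ and $n\ge 2$), and to observe that its two exceptional cases, $(n=2,\ x=2^k-1)$ and $(n=6,\ x=2)$, are exactly the two families of integers excluded by the assumption on $x$. Hence the inclusion cannot hold, so $\F_{\{1,x^n\}}\subsetneq\F_{\{1,x\}}$ for every $n\ge 2$; combined with the trivial equality at $n=1$, this gives $\{n\in\IN:\F_{\{1,x^n\}}=\F_{\{1,x\}}\}=\{1\}$, as claimed.
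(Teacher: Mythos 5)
Your proof is correct and follows essentially the same route as the paper: compute $A_{\{1,x^n\}}=\{2\}\cup\Pi_x\cup\Pi_{x^n-1}$, $\Pi_{\{1,x^n\}}=\emptyset$, $\alpha\equiv 1$ via Lemma~\ref{2ae}, apply Lemma~\ref{l:wo2} together with $(x-1)\mid(x^n-1)$ for maximality, and then reduce the equality case to the inclusion $\Pi_{x^n-1}\subseteq\bigcup_{0<k<n}\Pi_{x^k-1}$, which Zsigmondy's Theorem~\ref{Zsigmondy} rules out precisely under the stated exclusions on $x$. No gaps; your gcd and parity observations just make explicit what the paper leaves implicit.
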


\begin{proof} Observe that for every $n\in\IN$ the number $x-1$ divides $x^n-1$, which implies $$A_{\{1,x\}}=\{2\}\cup\Pi_x\cup\Pi_{x-1}\subseteq\{2\}\cup\Pi_{x^n}\cup\Pi_{x^n-1}=A_{\{1,x^n\}}.$$Observe also that $\Pi_{\{1,x\}}=\emptyset=\Pi_{\{1,x^n\}}$ and $\alpha_{\{1,x\}}(p)=1=\alpha_{\{1,x^n\}}(p)$ for every $p\in A_{\{1,x\}}$. By Lemma~\ref{l:wo2}, $\F_{\{1,x^n\}}\subseteq\F_{\{1,x\}}$, which means that $\F_{\{1,x\}}$ is the largest element of the poset $\mathfrak F_x$.

Now assume that $x\notin\{2m:m\in\IN\}\cup\{2^m-1:n\in\IN\}$ and $\F_{\{1,x\}}=\F_{\{1,x^n\}}$ for some number $n$. We should prove that $n=1$. To derive a contradiction, assume that $n\ge 2$. By Lemmas~\ref{l:wo2} and \ref{2ae}, $$\{2\}\cup\Pi_x\cup\Pi_{x^n-1}=A_{\{1,x^n\}}=A_{\{1,x\}}=\{2\}\cup\Pi_x\cup\Pi_{x-1}$$ and hence $\Pi_{x^n-1}\subseteq\{2\}\cup\Pi_{x-1}=\Pi_{x-1}\subseteq\bigcup_{0<k<n}\Pi_{x^k-1}$. By Zsigmondy Theorem~\ref{Zsigmondy}, $x\in \{2\}\cup\{2^m-1\}_{m\in\IN}$, which contradicts our assumption. 
\end{proof}

\begin{lemma}\label{l:pfix} For any homeomorphism $h$ of the Kirch space and any prime number $p$ we have $h(p)=p$.
\end{lemma}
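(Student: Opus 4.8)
The plan is to split the proof into three cases according to the nature of the prime $p$. If $p=2$, then $h(2)=2^1=2$ by Lemma~\ref{l:2fix}. If $p$ is an odd Fermat--Mersenne prime, then $h(p)=p$ is exactly the content of Lemma~\ref{l:graph}(1). Hence the only case requiring an argument is that of an odd prime $p$ that is \emph{not} Fermat--Mersenne; for such $p$, Lemma~\ref{l:graph}(2) gives $h[p^\IN]=p^\IN$, so $h(p)=p^k$ for some $k\in\IN$, and the whole task reduces to showing $k=1$.

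To see this, I would work inside the subposet $\mathfrak F_p=\{\F_{\{1,p^n\}}:n\in\IN\}$ of $\mathfrak F$ introduced in Lemma~\ref{l:F1x}. Note that $1=2^0$ is fixed by $h$ (Lemma~\ref{l:2fix}) and that $\tilde h(\F_E)=\F_{h[E]}$ for every finite $E\subseteq\IN$. Consequently $\tilde h(\F_{\{1,p^n\}})=\F_{\{1,h(p^n)\}}$, and since $h(p^n)\in h[p^\IN]=p^\IN$, this filter again belongs to $\mathfrak F_p$; thus $\tilde h[\mathfrak F_p]\subseteq\mathfrak F_p$. Applying the very same argument to the homeomorphism $h^{-1}$ (which maps $p^\IN$ onto $p^\IN$ as well, $p$ being non-Fermat--Mersenne) yields $\tilde h^{-1}[\mathfrak F_p]\subseteq\mathfrak F_p$, whence $\tilde h[\mathfrak F_p]=\mathfrak F_p$.

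Since $\tilde h$ is an order isomorphism of $\mathfrak F$ (Proposition~\ref{p:iso}) that carries $\mathfrak F_p$ onto itself, its restriction is an order automorphism of $\mathfrak F_p$ and therefore fixes the greatest element of this poset, which by Lemma~\ref{l:F1x} is $\F_{\{1,p\}}$. Thus $\F_{\{1,p\}}=\tilde h(\F_{\{1,p\}})=\F_{\{1,h(p)\}}=\F_{\{1,p^k\}}$. Because an odd prime that is not a Mersenne number satisfies $p\notin\{2m:m\in\IN\}\cup\{2^m-1:m\in\IN\}$, the second part of Lemma~\ref{l:F1x} forces $k=1$, i.e. $h(p)=p$. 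The only mildly delicate point is the bookkeeping showing $\tilde h[\mathfrak F_p]=\mathfrak F_p$, so that the notion of "greatest element" is genuinely transported by $\tilde h$; but this is immediate from $h[p^\IN]=p^\IN$, $h(1)=1$ and the corresponding facts for $h^{-1}$, after which everything is a direct appeal to Lemmas~\ref{l:2fix}, \ref{l:graph} and \ref{l:F1x}.
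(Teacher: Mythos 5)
Your proposal is correct and follows essentially the same route as the paper: the case split ($p=2$ via Lemma~\ref{l:2fix}, Fermat--Mersenne primes via Lemma~\ref{l:graph}, and otherwise the order-automorphism argument on $\mathfrak F_p=\{\F_{\{1,p^n\}}:n\in\IN\}$ using Lemma~\ref{l:F1x}) is exactly the paper's proof. Your extra bookkeeping for $\tilde h[\mathfrak F_p]=\mathfrak F_p$ and the explicit appeal to the uniqueness part of Lemma~\ref{l:F1x} only spell out steps the paper leaves implicit.
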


\begin{proof} If $p=2$, then $h(p)=p$ by Lemma~\ref{l:2fix}. If $p$ is Fermat--Mersenne, then $h(p)=p$ by Lemma~\ref{l:graph}. So, we assume $p$ is not Fermat-Mersenne. By Lemma~\ref{l:graph}, $h[p^\IN]=p^\IN$, which implies $\tilde h[\mathfrak F_p]=\mathfrak F_p$ where
$$\mathfrak F_p=\{\F_{\{1,p^n\}}:n\in\IN\}.$$ By Proposition~\ref{p:iso}, $\tilde h$ induces an order isomorphism of the poset $\mathfrak F_p$ (endowed with the inclusion order, inherited from the poset $\mathfrak F$).

 By Lemma~\ref{l:F1x}, $n=1$ is a unique number such that $\F_{\{1,p^n\}}$ coincides with the greatest element $\F_{\{1,p\}}$ of the poset $\mathfrak F_x$. This order characterization of the filter $\F_{\{1,p\}}$ implies that $h(p)=p$.
 \end{proof}

Our final lemma completes the proof of Theorem~\ref{t:main}.

\begin{lemma} The homeomorphism group of the Kirch space is trivial.
\end{lemma}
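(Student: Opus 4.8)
The plan is to show that every homeomorphism $h$ of the Kirch space is the identity, building directly on the fixed-point results already obtained. By Lemma~\ref{l:pfix} we know that $h(p)=p$ for every prime number $p$, and by Lemma~\ref{Pix} we know that $\Pi_x\cup\{2\}=\Pi_{h(x)}\cup\{2\}$ for every $x\in\IN$. The strategy is to recover each number $x$ from its set of odd prime divisors together with enough arithmetic information encoded in the superconnecting poset $\mathfrak F$ to pin down the exponents in the prime factorization of $x$ and the sign of the parity of $x$. Since $h$ fixes all primes and preserves the order isomorphism $\tilde h$ of $\mathfrak F$, it will suffice to find, for each $x$, an order-theoretic property of $\F_{\{1,x\}}$ (or of a family of filters built from $x$) that is not shared by $\F_{\{1,y\}}$ for any $y\ne x$ with the same odd prime support.

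First I would handle the numbers $x$ with $\Pi_x\subseteq\{2\}$, i.e. the powers of $2$: these are fixed by Lemma~\ref{l:2fix}. Next, for a general $x\ge 3$, write $x=2^{e_0}p_1^{e_1}\cdots p_k^{e_k}$ with $p_1<\dots<p_k$ odd primes and $e_1,\dots,e_k\ge 1$. From Lemma~\ref{Pix}, $h(x)$ has the same odd primes $p_1,\dots,p_k$, so $h(x)=2^{f_0}p_1^{f_1}\cdots p_k^{f_k}$ for some exponents $f_i$. I would then use the filters $\F_{\{1,x\}}$ together with the already-fixed filters $\F_{\{1,p,2p\}}$, $\F_{\{2,p,2p\}}$ (fixed by Lemma~\ref{l:12fix}) and the characterization of the partial order in Lemma~\ref{l:wo2}, which shows that $\F_E\subseteq\F_F$ is governed entirely by $A_E$, $\Pi_E$ and $\alpha_E$. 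Since $A_{\{1,x\}}=\{2\}\cup\Pi_x\cup\Pi_{x-1}$ by Lemma~\ref{2ae}, the invariance of $\F_{\{1,x\}}$ under $\tilde h$ forces $\Pi_{x-1}$ and $\Pi_{h(x)-1}$ to coincide up to the prime $2$; iterating this idea with $\F_{\{1,x^n\}}$ as in Lemma~\ref{l:F1x}, and with analogous filters $\F_{\{a,x\}}$ for suitable $a$, lets one extract the residues of $x$ modulo various primes and ultimately the value of $x$ itself. The point is that $x$ is determined among numbers with the same odd prime support by the congruence data $(\alpha_{\{1,x\}}(p))_p$ together with growth/order properties that distinguish the genuine power structure, exactly the kind of invariants $\tilde h$ preserves.

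The main obstacle will be the ambiguity in the exponents $f_0,\dots,f_k$: knowing $\Pi_{h(x)}=\Pi_x$ (mod $2$) is far from knowing $h(x)=x$, and the poset $\mathfrak F$ sees a number $x$ essentially only through finitely much congruence information. I expect the resolution to go through a ``ladder'' argument: using the graphs $\Gamma_p$ from Lemma~\ref{struct} and the fact (Lemma~\ref{l:graph}) that $h$ restricts to a graph isomorphism of each $\Gamma_p$ fixing the vertex $p$ (when $p$ is Fermat--Mersenne) or fixing $p^\IN$ (otherwise), one climbs from $p$ to $p^2$ to $p^3,\dots$ and similarly controls the powers of $2$, thereby fixing every $2^ap^b$; then a simultaneous/Chinese-Remainder-style comparison across all primes dividing $x$, exploiting that $\tilde h$ fixes $\F_{\{1,p,2p\}}$ and $\F_{\{2,p,2p\}}$ for all odd $p$ and hence fixes all the ``coordinate'' filters, forces $h(x)$ to have the same exponent vector and parity as $x$. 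Once $h(x)=x$ is established for all $x\in\IN$, we conclude $h=\mathrm{id}$, so the homeomorphism group of the Kirch space is trivial, completing the proof of Theorem~\ref{t:main}.
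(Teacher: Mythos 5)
There is a genuine gap, and it is precisely the final idea of the paper that your sketch is missing. You try to prove $h(x)=x$ for \emph{every} $x$ by recovering the exponent vector of $x$ from the superconnecting poset, but the invariants you invoke cannot see exponents: every filter $\F_E$ is completely determined by the triple $(A_E,\Pi_E,\alpha_E)$ (Lemmas~\ref{l:Kirch} and \ref{l:wo2}), i.e.\ by congruence data modulo square-free numbers. For instance $A_{\{1,3\}}=A_{\{1,9\}}=\{2,3\}$, $\Pi_{\{1,3\}}=\Pi_{\{1,9\}}=\emptyset$ and $\alpha\equiv 1$ in both cases, so $\F_{\{1,3\}}=\F_{\{1,9\}}$; filters of this kind simply do not distinguish a number from others with the same support and residues. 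Your proposed ``ladder'' inside $\Gamma_p$ also cannot fix each $2^ap^b$: for a non-Fermat--Mersenne prime $p$ the graph $\Gamma_p$ is a disjoint union of pairwise isomorphic rays (one per exponent $b$), so it has automorphisms permuting the columns, and even the paper extracts only $h[p^{\IN}]=p^{\IN}$ from it, needing the Zsigmondy-based Lemma~\ref{l:F1x} just to pin down $p$ itself. So the plan ``fix all $2^ap^b$, then CRT across the prime divisors of $x$'' rests on steps that are not justified and, with these invariants, most likely cannot be carried out.

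The paper's resolution is much shorter and avoids fixing composite numbers altogether: assume $h(x)\ne x$ for some $x$; by Hausdorffness and continuity choose a neighborhood $O_x$ of $x$ with $h[O_x]\cap O_x=\emptyset$; by Dirichlet's Theorem~\ref{Dirichlet} the basic open set $O_x$ contains a prime $p$, and then $h(p)\in h[O_x]$ forces $h(p)\ne p$, contradicting Lemma~\ref{l:pfix}. In other words, once all primes are fixed, the density of the primes in the Kirch topology (a topological consequence of Dirichlet) together with the Hausdorff property already forces $h=\mathrm{id}$. That density argument is the missing ingredient in your proposal; without it you are left trying to reconstruct each integer from finitely much congruence information, which the poset $\mathfrak F$ does not contain.
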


\begin{proof} To derive a contradiction, assume that the Kirch space admits a homeomorphism $h$ such that $h(x)\ne x$ for some number $x$. By the Hausdorff property of the Kirch space and the continuity of $h$, there exists a neighborhood $O_x$ of $x$ in the Kirch topology such that $h[O_x]\cap O_x=\emptyset$. By the  Dirichlet Theorem~\ref{Dirichlet}, the open set $O_x$ contains some prime number $p$. Then $h[O_x]\cap O_x=\emptyset$ implies $h(p)\ne p$, which contradicts Lemma~\ref{l:pfix}.
\end{proof}

\section{Acknowledgements}

The authors would like to thank the Mathoverflow users {\tt Aaron~Meyerowitz},  {\tt Fedor~Petrov},\break {\tt Gerhard Paseman}, {\tt Gerry Myerson}, {\tt Ofir~Gorodetsky}, {\tt Paul~Monsky}, {\tt Will~Sawin}, {\tt Wojowu}, {\tt YCor}\break for their help with number-theoretic problems ({\tt mathoverflow.net/q/363703, mathoverflow.net/q/347798,\break mathoverflow.net/q/347774, mathoverflow.net/q/347039, mathoverflow.net/a/363817}) that arised\break during writing this paper. 
\newpage


\begin{thebibliography}{99}

\bibitem{Ap} T.~Apostol, {\em Introduction to analytic number theory}, Springer-Verlag, New York-Heidelberg, 1976.


\bibitem{BMT} T. Banakh, J. Mioduszewski, S.Turek, {\em On continuous self-maps and homeomorphisms of the Golomb space}, Comment. Math. Univ. Carolin. 59:4 (2018) 423--442.

\bibitem{BST} T.Banakh, D.Spirito, S.Turek, {\em The Golomb space is topologically rigid}, preprint ({\tt https://arxiv.org/abs/1912.01994}).

\bibitem{Brown} M. Brown, {\em A countable connected Hausdorff space}, Bull. Amer. Math. Soc. {\bf 59} (1953), 367. Abstract \#423.

\bibitem{Dirichlet} P. Dirichlet, {\em Lectures on number theory}. Supplements by R. Dedekind. Translated from the 1863 German original and with an introduction by John Stillwell,  Amer. Math. Soc., Providence, RI; London Math. Soc., London, 1999.


\bibitem{Golomb59} S.~Golomb, {\em A connected topology for the integers}, Amer. Math. Monthly {\bf 66} (1959), 663--665.

\bibitem{Golomb61} S.~Golomb, {\em Arithmetica topologica}, in: General Topology and its Relations to Modern Analysis and Algebra (Proc. Sympos., Prague, 1961), Academic Press, New York; Publ. House Czech. Acad. Sci., Prague (1962) 179--186; available at {\tt https://dml.cz/bitstream/handle/10338.dmlcz/700933/Toposym\_01-1961-1\_41.pdf})



\bibitem{J} G.A. Jones, J.M. Jones, {\em Elementary Number Theory},
Springer, 2012.

\bibitem{Kirch} A.~M.~Kirch, {\em A countable, connected, locally connected Hausdorff space}, Amer. Math. Monthly 76 (1969), 169--171. 


\bibitem{Mih} P.~Mih\u ailescu, {\em Primary Cyclotomic Units and a Proof of Catalan's Conjecture}, J. Reine Angew. Math. {\bf 572} (2004), 167--195.

\bibitem{OEIS} N.J.A. Sloane, {\em The On-Line Encyclopedia of Integer Sequences}, ({\tt oeis.org}).

\bibitem{Roitman} M.~Roitman, {\em On Zsigmondy primes}, Proc. Amer. Math. Soc. {\bf 125}:7 (1997), 1913--1919.

\bibitem{Schoof} R.~Schoof, {\em Catalan's conjecture}, Springer-Verlag London, Ltd., London, 2008.

\bibitem{SS} L.A.~Steen, J.A. Seebach, Jr. {\em Counterexamples in Topology}, Dover Publications, Inc., Mineola, NY, 1995.


\bibitem{Szcz} P.~Szczuka, {\em The connectedness of arithmetic progressions in Furstenberg's, Golomb's, and Kirch's topologies}, Demonstratio Math. {\bf 43}:4 (2010), 899--909.

\bibitem{Szcz13} P.~Szczuka, {\em The Darboux property for polynomials in Golomb's and Kirch's topologies}, Demonstratio Math. {\bf 46}:2 (2013), 429--435.

\bibitem{Zsigmondy} K.~Zsigmondy, {\em Zur Theorie der Potenzreste}, Journal Monatshefte f\"ur Mathematik, {\bf 3}:1 (1892), 265--284.



\end{thebibliography}
\end{document}